\documentclass[12pt, reqno]{amsart}
\usepackage{ifthen,amsfonts,
amsmath,amssymb,
epsfig,color,graphicx}


\headheight=6.15pt
\textheight=8.75in
\textwidth=6.5in
\oddsidemargin=0in
\evensidemargin=0in
\topmargin=0in

\setcounter{tocdepth}{1}
\makeatletter

\def\fnum{equation}
\def\fnum{theorem}

\begingroup

\newtheorem{theorem}{Theorem}[section]
\newtheorem{lemma}[theorem]{Lemma}
\newtheorem{propos}[theorem]{Proposition}

\endgroup

\newtheorem{definition}[theorem]{Definition}

\newtheorem{remark}[theorem]{Remark}

\newtheorem{Rem}[\fnum]{Remark}

\newtheorem*{Cla}{Claim}

\numberwithin{equation}{section}

\setcounter{section}{-1}

\newcommand{\per}{{\rm Per}\,}

\newcommand{\an}{{\rm An}}
\newcommand{\eps}{{\varepsilon}}

\newcommand{\diam}{{\text {diam}}}
\newcommand{\dist}{{\text {dist}}}

\newcommand{\Sing}{{\rm Sing}}

\newcommand{\R}{{\rm R}}

\def\RR{{\mathbb{R}}}

\def\NN{{\mathbb{N}}}

\newcommand{\cB}{{\mathcal{B}}}
\newcommand{\cC}{{\mathcal{C}}}
\newcommand{\cD}{{\mathcal{D}}}

\newcommand{\cF}{{\mathcal{F}}}

\newcommand{\cH}{{\mathcal{H}}}

\newcommand{\cO}{{\mathcal{O}}}

\newcommand{\cV}{{\mathcal{V}}}

\newcommand\B{{\mathcal{B}}}

\renewcommand\d{{\rm d}\,}

\def\ov#1{\overline{#1}}

\newcommand\haus{\mathcal{H}}

\newcommand\N{{\mathbb N}}

\newcommand\res{\mathop{\hbox{\vrule height 7pt width .5pt depth 0pt
\vrule height .5pt width 6pt depth 0pt}}\nolimits}
\newcommand\supp{{\rm supp}\,}  
\newcommand\Inj{{\textrm{Inj}\,}}

\newcommand\An{{\mathcal{AN}}}

\newcommand\ind{{\bf 1}}

\begin{document}

\title[The existence of embedded minimal hypersurfaces]
{The existence of embedded minimal hypersurfaces}

\author{Camillo De Lellis}%
\author{Dominik Tasnady}%
\address{Zurich university}

\email{camillo.delellis@math.unizh.ch and 
dominik.tasnady@math.unizh.ch}

\begin{abstract}
We give a shorter proof of the existence of nontrivial
closed minimal hypersurfaces in closed smooth 
$(n+1)$--dimensional Riemannian manifolds,
a theorem proved first by Pitts for $2\leq n\leq 5$
and extended later by Schoen and Simon to any $n$. 
\end{abstract}

\maketitle
 
\section{Introduction}
In this paper we give a proof of the following theorem,
a natural generalization of the classical existence of 
nontrivial simple closed geodesics in closed $2$--d Riemannian 
manifolds.

\begin{theorem}\label{t:existence}
Let $M$ be an $(n+1)$-dimensional smooth closed Riemannian manifold. Then there is a nontrivial 
embedded minimal hypersurface $\Sigma\subset M$ 
without boundary with a singular set $\Sing\, \Sigma$
of Hausdorff dimension at most $n-7$.
\end{theorem}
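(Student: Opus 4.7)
The plan is to carry out an Almgren--Pitts min-max construction, adapted as much as possible to work directly with smooth hypersurfaces (or boundaries of Caccioppoli sets) in order to streamline the technical setup. First I introduce a class $\cF$ of continuous one-parameter families $\{\Sigma_t\}_{t\in[0,1]}$ of such $n$-dimensional boundaries with $\Sigma_0=\Sigma_1=\emptyset$ whose associated enclosed-volume function $t\mapsto\Vol(E_t)$, where $\partial E_t=\Sigma_t$, interpolates monotonically from $0$ to $\Vol(M)$. A concrete example is the level sets of a Morse function $f\colon M\to[0,1]$. I then define
\[
m_0\;=\;\inf_{\{\Sigma_t\}\in\cF}\;\sup_{t\in[0,1]}\haus^n(\Sigma_t).
\]
The first nontrivial observation is the positivity $m_0>0$: by the isoperimetric inequality on small geodesic balls of $M$, no family whose slices all have arbitrarily small $n$-area can sweep monotonically between enclosed volumes $0$ and $\Vol(M)$.

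Next I take a minimizing sequence $\{\Sigma^k_t\}_k$ and run a \emph{pull-tight} procedure. The idea is to build a continuous vector field on the space of varifolds that strictly decreases mass outside an arbitrary neighborhood of the stationary ones, and use its flow to modify each family so that every varifold subsequential limit of slices with area approaching $m_0$ is a stationary integral varifold $V$ of mass $m_0$. The heart of the argument, and the step I expect to be the main obstacle, is to refine this selection so that some such limit $V$ is \emph{almost minimizing in small annuli} in the sense of Pitts: in every sufficiently small annulus $A\subset M$ and every $\eps>0$, $V\res A$ cannot be deformed through a continuous family of boundaries to something of mass below $\|V\|(A)-\eps$ without passing through an intermediate slice of mass above $\|V\|(A)+\eps$. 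This is established by a combinatorial/pigeonhole argument on pairs of disjoint small concentric annuli: if almost-minimality failed at every limit and every scale, one could splice local area-decreasing replacements into the minimizing family to produce a competitor in $\cF$ with strictly smaller maximum area, contradicting the definition of $m_0$.

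Finally, with an almost-minimizing stationary integral varifold $V$ in hand, I appeal to the regularity theory of Schoen--Simon: the support of such a $V$ is a smooth embedded minimal hypersurface $\Sigma$ away from a closed singular set $\Sing\,\Sigma$ of Hausdorff dimension at most $n-7$. Nontriviality $\Sigma\neq\emptyset$ is ensured by $\|V\|(M)=m_0>0$, which finishes the proof. The principal technical difficulties are concentrated in the combinatorial extraction of the almost-minimizing limit and in verifying, from the min-max construction, the stability-type hypotheses needed to invoke the Schoen--Simon estimates on the regular part.
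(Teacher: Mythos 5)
Your outline reproduces the first half of the paper's strategy faithfully (sweepouts of boundaries, positivity of $m_0$ by the isoperimetric inequality, pull-tight, and the Almgren--Pitts pigeonhole argument on pairs of well-separated annuli to extract an almost minimizing min--max limit), but there is a genuine gap at the final step: you invoke ``the regularity theory of Schoen--Simon'' as if it stated that the support of an almost minimizing stationary integral varifold is smooth away from a set of dimension $n-7$. No such off-the-shelf theorem exists. The Schoen--Simon results are curvature estimates and a compactness theorem for \emph{stable} minimal hypersurfaces that are already known to be smooth outside a set of $\cH^{n-2}$-measure zero; an almost minimizing varifold is a priori just a varifold, with no regular part and no stability structure to which those estimates could be applied. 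Your closing remark about ``verifying the stability-type hypotheses on the regular part'' is circular for exactly this reason: the existence of a regular part is what has to be proved. A related imprecision is that almost minimality is not a property of the limit varifold $V$ at all, but of a min--max sequence of surfaces $\Gamma^k$ converging to $V$; one cannot literally ``deform $V\res A$ through boundaries.''

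What is missing is the entire replacement machinery, which occupies Sections 4 and 5 of the paper and is the harder half of the theorem. There, for each almost minimizing slice one solves a \emph{homotopically constrained} Plateau problem in an annulus, and the key point (the ``blow down--blow up'' homotopy) is that at sufficiently small scales the constrained minimizers are genuine perimeter minimizers, so De Giorgi's interior regularity applies; only then does Schoen--Simon compactness enter, to pass to the limit and produce a \emph{stable minimal hypersurface} replacing $V$ in the annulus while preserving stationarity, mass, and the almost minimizing property of the approximating sequence. Even after that, one must still prove that $V$ itself coincides with these replacements: this requires iterating the replacement construction (three times in the paper), a tangent-cone analysis, the maximum principle, unique continuation to glue the two replacements $\Gamma'$ and $\Gamma''$ across a sphere, and Allard's theorem to handle the centers of the balls when $n\leq 6$. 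None of this can be compressed into a citation, and without it your argument stops at a stationary integral varifold of positive mass, not at an embedded minimal hypersurface.
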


More precisely, $\Sigma$ is 
a closed set of finite $\cH^n$--measure and $\Sing\, \Sigma\subset
\Sigma$ is the smallest closed set $S$ such that
$M\setminus S$ is a smooth embedded hypersurface
($\Sigma\setminus \Sing\, \Sigma$ is in fact
analytic if $M$ is analytic). In this paper \textit{smooth} will always mean $C^\infty$. In fact, the result remains true for any $C^4$ Riemannian manifold $M$, $\Sigma$ then will be of class $C^2$ (see \cite{SS}). Moreover
$\int_{\Sigma\setminus\Sing\, \Sigma} \omega = 0$
for any exact $n$--form on $M$. 
The case $2\leq n\leq 5$ was proved by 
Pitts in his groundbreaking monograph
\cite{P}, an outstanding contribution which
triggered all the subsequent research in the
topic. The general case was proved
by Schoen and Simon in \cite{SS}, 
building heavily upon the work of Pitts. 

The monograph \cite{P} can be ideally split into two parts. 
The first half of the book 
implements a complicated existence
theory for suitable ``weak generalizations'' of 
global minimal submanifolds, which is
a version of the classical min--max argument introduced by Birkhoff
for $n=1$ (see \cite{B}).
The second part contains the regularity 
theory needed to prove Theorem \ref{t:existence}. The curvature
estimates of \cite{SSY} for stable minimal surfaces are
a key ingredient of this part: the core
contribution of \cite{SS} is the extension 
of these fundamental estimates 
to any dimension,
which enabled the authors to complete Pitts' program for $n> 5$. 

\cite{SS} gives also a quite readable account of parts of 
Pitts' regularity theory.
To our knowledge, there is instead no contribution to clarify
other portions of the monograph, at least in general dimension.
Indeed, for $n=2$, the unpublished PhD thesis
of Smith (see \cite{Sm}) gives a powerful variant of Pitts 
approach. Building on ideas of Simon, 
the author proved the existence 
of minimal embedded $2$--spheres in any $M$ which is topologically 
a $3$--sphere (further theorems
in general Riemannian $3$--manifolds have been claimed in \cite{PR1}; 
\cite{CD} and \cite{DP} contain a complete proof of the Simon--Smith 
Theorem and of a statement 
in the direction of \cite{PR1}).
Smith's aproach relies heavily on the features of  
$2$--dimensional surfaces in $3$--manifolds, most notably on the celebrated 
paper \cite{MSY}, and therefore it is not 
feasible in higher dimensions.

This paper gives a much simpler proof of Theorem \ref{t:existence}. 
Our contribution draws heavily on the existing
literature and follows Pitts in many aspects.
However we introduce some new ideas which, in spite of 
their simplicity, allow us to shorten
the proof dramatically. 
These contributions are contained in Sections 3 and 4 of the paper, 
but we prefer to give
a complete account of the proof of Theorem \ref{t:existence}, containing
all the necessary technical details.
We leave aside only those facts which are either
(by now) classical results or for which we can give
a precise reference. 

\subsection{Min--max surfaces}
In what follows $M$ will denote an $(n+1)$-dimensional smooth Riemannian 
manifold without boundary. First of all we need to generalize slightly
the standard notion of $1$-parameter family of hypersurfaces, allowing
for some singularities.

\begin{definition}\label{d:sweep}
A family $\{\Gamma_t\}_{t\in [0,1]^k}$ of closed subsets of $M$ 
with finite $\cH^n$--measure is called a {\em generalized smooth family} 
if 
\begin{itemize}
\item[(s1)] For each $t$ there is a finite $P_t\subset M$ such that 
$\Gamma_t$ is a smooth hypersurface in $U$;
\item[(s2)] $\cH^n(\Gamma_t)$ depends smoothly on $t$ and
$t\mapsto \Gamma_t$ is continuous in the Hausdorff sense;
\item[(s3)] on any $U\subset\subset M\setminus P_{t_0}$,  
$\Gamma_t \stackrel{t\rightarrow t_0}{\longrightarrow} \Gamma_{t_0}$ 
smoothly in $U$. 
\end{itemize}
$\{\Gamma_t\}_{t\in [0,1]}$ is a {\em sweepout} of
$M$ if there exists a family $\{\Omega_t\}_{t\in [0,1]}$ 
of open sets such that
\begin{itemize}
\item[(sw1)] $(\Gamma_t \setminus \partial \Omega_t) \subset P_t$ for any $t$;
\item[(sw2)] $\Omega_0=\emptyset$ and $\Omega_1 = M$; 
\item[(sw3)] ${\rm Vol} (\Omega_t\setminus\Omega_s)
+{\rm Vol} (\Omega_s\setminus\Omega_t) \to 0$ 
as $t\to s$.
\end{itemize}
\end{definition}

\begin{Rem}\label{r:smooth_conv}
The convergence in (s3) means, as usual, that, if 
$U\subset\subset M\setminus
P_{t_0}$, then there is $\delta>0$ such that, for $|t-t_0|<\delta$,
$\Gamma_t\cap U$ is the graph of a function $g_t$ over 
$\Gamma_{t_0}\cap U$.
Moreover, given $k\in \N$ and $\eps>0$,
$\|g_t\|_{C^k}< \eps$ provided $\delta$ is sufficiently small.
\end{Rem}

We introduce the singularities $P_t$ for two important reasons.
They allow for the change of topology which, for $n> 2$,
is a fundamental tool of the regularity theory.
It is easy to exhibt sweepouts as in Definition \ref{d:sweep}
as it is witnessed by the
following proposition.  

\begin{propos}\label{p:Morse}
Let $f: M \to [0,1]$ be a smooth Morse function. 
Then $\{\{f=t\}\}_{t\in [0,1]}$ is a sweepout.
\end{propos}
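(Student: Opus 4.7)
The construction is standard: set $\Omega_t := \{f<t\}$ for $t\in[0,1)$ and $\Omega_1 := M$, with $\Gamma_t := \{f=t\}$ and $P_t := P\cap \Gamma_t$, where $P$ is the critical set of $f$ (finite by compactness of $M$ and the Morse assumption). All the regularity assertions reduce to one construction: on any open set $W$ with $\overline W$ disjoint from $P_{t_0}$, the vector field $X := \nabla f/|\nabla f|^2$ is smooth, and its flow $\phi_s$ satisfies $f\circ\phi_s = f+s$. Hence for $|t-t_0|$ small, $\phi_{t-t_0}$ is a smooth diffeomorphism presenting $\Gamma_t\cap W$ as a graph over $\Gamma_{t_0}\cap W$ whose $C^k$ norm tends to zero as $t\to t_0$. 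This yields (s3), the smoothness of $t\mapsto \cH^n(\Gamma_t\cap W)$, and (by covering the compact set $\Gamma_{t_0}\setminus P_{t_0}$ with finitely many such $W$) the smoothness of the full area function at any regular value $t_0$; (s1) is the implicit function theorem applied wherever $\nabla f\neq 0$.

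The sweepout conditions are equally direct. $\partial\Omega_t\subseteq\Gamma_t$ holds in all cases, and a point of $\Gamma_t\setminus\partial\Omega_t$ must be a strict local extremum of $f$, hence a critical point; this proves (sw1), with $\Gamma_1\subset P_1$ because $\partial\Omega_1=\emptyset$. (sw2) is built into the definitions. For (sw3), if $s<t$ lie in $[0,1)$ then $\Omega_t\setminus\Omega_s\subseteq\{s\le f<t\}$ has volume tending to zero with $|t-s|$ by dominated convergence; and $M\setminus\Omega_s=\{f\ge s\}$ shrinks to the finite set $\{f=1\}$ as $s\to 1^-$. Hausdorff continuity of $t\mapsto\Gamma_t$ at regular values is again read off from the flow.

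The delicate point, and the principal obstacle, is the behaviour at the finitely many critical values $c_1<\ldots<c_N$ of $f$. In the Morse normal form $f=c_j \pm x_1^2\pm\ldots\pm x_{n+1}^2$ near a critical point, the local contribution to $\cH^n(\Gamma_t)$ is only of class $C^k$ for some finite $k$ depending on the index and on $n$ (an index-$0$ critical point, for example, contributes a term of order $(t-c_j)^{n/2}$ for $t>c_j$ and zero for $t<c_j$); likewise, Hausdorff continuity at $t=c_j$ can fail when $f$ has a strict, non-global local extremum of value $c_j$. These are precisely the topology changes the singular sets $P_t$ of Definition~\ref{d:sweep} were introduced to absorb, and in this setting one reads (s2) as: smooth on $[0,1]\setminus\{c_1,\ldots,c_N\}$, continuous on $[0,1]$. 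For a strictly literal verification, one can restrict attention to Morse functions with no non-global local extrema (such functions exist on every closed manifold) and then precompose the family with a smooth self-diffeomorphism of $[0,1]$ flattening the area function to infinite order at each $c_j$.
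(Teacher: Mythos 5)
The paper offers no argument for this proposition (``the obvious proof is left to the reader''), so there is nothing to match your write-up against; judged on its own, the part of the proof the authors certainly had in mind is the one you give, and it is correct. Taking $\Omega_t=\{f<t\}$, $P_t$ the critical points on the level $t$, and using the flow of $\nabla f/|\nabla f|^2$ to present nearby level sets as small graphs is the standard way to get (s1), (s3), (sw1)--(sw3) and the regularity of the area away from critical values. One small correction: you want $\overline W$ disjoint from the whole critical set, not merely from $P_{t_0}$; since any critical point lying in $M\setminus P_{t_0}$ has critical value different from $t_0$, the levels $\Gamma_t$ with $t$ close to $t_0$ avoid a neighborhood of it, so the argument survives after shrinking $W$.

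Your diagnosis of the critical values is also accurate, and it points at a defect of the statement rather than of your argument: for a general Morse function $t\mapsto\cH^n(\{f=t\})$ is only finitely differentiable across a critical value (the one-sided contribution $c\,(t-c_j)_+^{n/2}$ of an interior minimum is never $C^\infty$), and Hausdorff continuity genuinely fails at the value of a local-but-not-global extremum; so (s2), read literally, cannot hold for all Morse functions, and any proof must either read (s2) more loosely, as you do, or prove a variant. Where you are too quick is the ``strictly literal verification''. A smooth self-diffeomorphism of $[0,1]$ has nowhere vanishing derivative and cannot flatten anything; you need a smooth monotone surjection $\sigma$ with degenerate critical points over the $c_j$, and what you then obtain is the reparametrized family $\{\{f=\sigma(t)\}\}$, which is a sweepout but is no longer the family $\{\{f=t\}\}$ of the statement. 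Moreover, smoothness of $t\mapsto\cH^n(\{f=\sigma(t)\})$ at the flat points requires knowing that the derivatives of the area function blow up at most polynomially near each $c_j$ (equivalently, that the area admits an expansion in powers and logarithms of $|t-c_j|$); this is true for Morse singularities but is asserted, not proved, in your text. None of this harms the paper: Theorem \ref{t:main} only needs one homotopically closed family of sweepouts, and your restricted construction (a Morse function with a unique local maximum and minimum, suitably reparametrized) provides it --- but it establishes a variant of Proposition \ref{p:Morse}, not its literal text.
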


The obvious proof is left to the reader.
For any generalized family $\{\Gamma_t\}$ we set
\begin{equation}
 \cF(\{\Gamma_t\})\;:=\;\max_{t\in[0,1]} \cH^n(\Gamma_t).
\end{equation}
A key property of sweepouts is an obvious 
consequence of the isoperimetric inequality.

\begin{propos}\label{p:isop} There exists
$C(M)>0$ such that $\cF (\{\Gamma_t\})
\geq C (M)$ for every sweepout. 
\end{propos}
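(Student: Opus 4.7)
The plan is to combine the intermediate value theorem (applied to $t\mapsto\Vol(\Omega_t)$) with the standard isoperimetric inequality on the closed manifold $M$, and then to relate the perimeter of $\Omega_t$ to $\haus^n(\Gamma_t)$ via (sw1).

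First I would observe that $t\mapsto \Vol(\Omega_t)$ is continuous on $[0,1]$: by (sw3),
\[
|\Vol(\Omega_t)-\Vol(\Omega_s)| \;\le\; \Vol(\Omega_t\setminus\Omega_s)+\Vol(\Omega_s\setminus\Omega_t)\;\longrightarrow\;0
\]
as $t\to s$. By (sw2) this continuous function goes from $0$ to $\Vol(M)$, so by the intermediate value theorem there exists $t_0\in(0,1)$ with $\Vol(\Omega_{t_0})=\Vol(M)/2$.

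Next I would invoke the isoperimetric inequality on the compact Riemannian manifold $M$: there is a constant $C(M)>0$ such that for every Caccioppoli set $\Omega\subset M$,
\[
\Per(\Omega) \;\ge\; C(M)\,\min\!\bigl(\Vol(\Omega),\,\Vol(M\setminus\Omega)\bigr)^{n/(n+1)}.
\]
Applied at $t=t_0$, where $\min(\Vol(\Omega_{t_0}),\Vol(M\setminus\Omega_{t_0}))=\Vol(M)/2$, this gives $\Per(\Omega_{t_0})\ge C'(M)>0$, with $C'(M)$ depending only on $M$.

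Finally I would relate $\Per(\Omega_{t_0})$ to $\haus^n(\Gamma_{t_0})$. By (s1) the set $\Gamma_{t_0}$ is a smooth hypersurface away from the finite set $P_{t_0}$, and by (sw1) the reduced boundary $\partial^{*}\Omega_{t_0}$ is contained in $\Gamma_{t_0}\cup P_{t_0}$ up to $\haus^n$-null sets; since $P_{t_0}$ is finite, it has zero $\haus^n$-measure, so
\[
\Per(\Omega_{t_0}) \;=\; \haus^n(\partial^{*}\Omega_{t_0}) \;\le\; \haus^n(\Gamma_{t_0}).
\]
Combining the three steps yields $\cF(\{\Gamma_t\})\ge \haus^n(\Gamma_{t_0})\ge C'(M)$, as required.

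The only substantive point is the last inequality $\Per(\Omega_{t_0})\le\haus^n(\Gamma_{t_0})$; once one is comfortable that $\Gamma_t$ serves as the topological boundary of $\Omega_t$ away from the finite singular set $P_t$ (as dictated by (s1) and (sw1)), the rest is just IVT plus a textbook isoperimetric inequality on $M$.
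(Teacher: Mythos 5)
Your proof is correct and follows essentially the same route as the paper: continuity of $t\mapsto\Vol(\Omega_t)$ from (sw3), the intermediate value theorem to find $t_0$ with $\Vol(\Omega_{t_0})=\Vol(M)/2$, and the isoperimetric inequality on $M$, with the comparison $\Per(\Omega_{t_0})\leq\cH^n(\Gamma_{t_0})$ that the paper uses tacitly. One small caveat: (sw1) as literally stated gives $\Gamma_t\setminus\partial\Omega_t\subset P_t$, i.e. $\Gamma_t\subset\partial\Omega_t\cup P_t$, not the inclusion $\partial^{*}\Omega_{t_0}\subset\Gamma_{t_0}\cup P_{t_0}$ you attribute to it — though the latter is the intended reading of the definition of a sweepout and is exactly what the paper's own one-line conclusion also relies on.
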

\begin{proof} Let $\{\Omega_t\}$ be as in Definition \ref{d:sweep}.
Then, there is $t_0\in [0,1]$ such that ${\rm Vol} (\Omega_{t_0})
= {\rm Vol} (M)/2$. We then conclude 
$\cH^n (\Gamma_{t_0})\;\geq\; c_0^{-1}  
(2^{-1}{\rm Vol}\, (M))^{\frac{n}{n+1}}$, where $c_0$ is the
isoperimetric constant of $M$.
\end{proof}

For any family $\Lambda$ of sweepouts we define
\begin{equation}
 m_0(\Lambda)\;:=\;\inf_\Lambda \cF\;=\;
\inf_{\{\Gamma_t\}\in\Lambda}\left[\max_{t\in[0,1]} 
\cH^n(\Gamma_t)\right].
\end{equation}
By Proposition \ref{p:isop}, $m_0 (\Lambda)\geq C(M)>0$.
A sequence $\{\{\Gamma_t\}^k\}\subset\Lambda$ is \textit{minimizing} if 
$$
\lim_{k\to\infty}\cF(\{\Gamma_t\}^k)\;=\;m_0(\Lambda)\, .
$$ 
A sequence of surfaces $\{\Gamma_{t_k}^k\}$ is a 
\textit{min-max sequence} 
if $\{\{\Gamma_t\}^k\}$ is minimizing and  $\cH^n(\Gamma_{t_k}^k)
\to m_0(\Lambda)$. 
The min--max construction is applied 
to families of sweepouts which are closed under
a very natural notion of homotopy.

\begin{definition}\label{d:homotopy}
Two sweepouts $\{\Gamma^0_s\}$ and $\{\Gamma^1_s\}$ are homotopic
if there is a generalized family $\{\Gamma_t\}_{t\in [0,1]^2}$
such that $\Gamma_{(0,s)} = \Gamma^0_s$ and $\Gamma_{(1,s)}=\Gamma^1_s$.
A family $\Lambda$ of sweepouts is called {\em homotopically closed} if 
it contains the homotopy class of each of its elements.
\end{definition}

Ultimately, this paper gives a proof of the following Theorem,
which, together with Proposition \ref{p:Morse}, implies
Theorem \ref{t:existence} for $n\geq 2$ (recall that Morse functions
exist on every smooth compact Riemannian manifold without boundary;
see Corollary 6.7 of \cite{Mi}).

\begin{theorem}\label{t:main} Let $n\geq 2$.
For any homotopically closed family 
$\Lambda$ of sweepouts there is a min--max sequence
$\{\Gamma_{t_k}^k\}$ converging (in the sense of varifolds)
to an embedded minimal hypersurface $\Sigma$ as in Theorem \ref{t:existence}.
Multiplicity is allowed.
\end{theorem}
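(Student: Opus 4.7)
The plan is to follow the Almgren--Pitts min--max scheme, streamlined as the authors propose, in three main steps: a pull--tight procedure yielding stationary varifold limits, the extraction of a min--max sequence with the almost--minimizing property in annuli, and a replacement/regularity argument invoking the Schoen--Simon curvature estimates. Throughout, homotopic closure of $\Lambda$ will be the key structural hypothesis allowing us to modify a minimizing sequence without leaving the class.

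First, by a pull--tight procedure I would replace any minimizing sequence $\{\{\Gamma_t\}^k\}\subset\Lambda$ by a new minimizing sequence all of whose min--max subsequential varifold limits are stationary. The construction assigns continuously to each non--stationary integral $n$--varifold $V$ (with mass below a fixed threshold) a smooth vector field $X_V$ on $M$ whose first variation strictly decreases $\|V\|(M)$ by a definite amount, vanishing near stationary $V$. Flowing $\Gamma_t^k$ along $X_{[\Gamma_t^k]}$ for a time depending continuously on $t$ produces a sweepout homotopic to the original; homotopic closure of $\Lambda$ keeps it in $\Lambda$, and its max area is strictly smaller except asymptotically at those $t$ where $[\Gamma_t^k]$ is already near stationary.

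Second, I would prove the existence of a min--max sequence $\{\Gamma_{t_k}^k\}$ converging (as varifolds) to some stationary integral $V$ and enjoying the \emph{almost minimizing} property in sufficiently many annuli around every $p\in\supp V$: for each such $p$ there are arbitrarily small radii $r$ such that in the annulus $A_{r/2,r}(p)$ no compactly supported homotopy starting at $\Gamma_{t_k}^k\cap A$, with area bounded by $\cH^n(\Gamma_{t_k}^k\cap A)+\eta$ throughout, decreases area by more than $\eta$. The proof is by contradiction and is the combinatorial heart of the argument: if such a sequence did not exist, one could cover $M$ by a finite collection of balls partitioned into a bounded number of subfamilies of pairwise disjoint balls; the local area--reducing homotopies in each subfamily could be performed simultaneously, and scheduling these across subfamilies yields a global homotopy lowering $\cF$ strictly below $m_0(\Lambda)$, contradicting the definition of $m_0$. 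The argument must be carried out so that the glued homotopy respects the sweepout axioms (s1)--(sw3), and this is where the authors' simplification (Sections 3--4) presumably operates.

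Third, the almost--minimizing property, combined with stationarity, yields regularity via a replacement argument. In each good annulus $A$ one minimizes area among admissible competitors agreeing with $\Gamma_{t_k}^k$ on $\partial A$; passing to the limit produces a replacement varifold $V'$ equal to $V$ outside $A$ and a stable minimal hypersurface inside. Iterating at two nested annular scales and invoking unique continuation across the common boundary forces $V$ itself to be a stable minimal hypersurface in each punctured ball. The Schoen--Simon curvature estimates \cite{SS} then deliver smoothness of $\supp V$ outside a closed singular set of Hausdorff dimension at most $n-7$, and removability of isolated singularities of stable minimal hypersurfaces extends regularity across the punctured center. Finally, $\int_{\Sigma\setminus\Sing\,\Sigma}\omega=0$ for exact $n$--forms $\omega$ follows from stationarity of $V$. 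The main obstacle is clearly the second step: arranging the combinatorial almost--minimizing argument so that all deformations remain realizable by generalized families in $\Lambda$, and doing so without the heavy discretization/flat--norm machinery of \cite{P}. The regularity step, by contrast, is essentially a black--box application of Schoen--Simon once replacements are available.
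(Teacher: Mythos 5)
Your outline follows the same three-stage architecture as the paper (pull--tight, almost minimizing in annuli via a combinatorial contradiction, replacements plus Schoen--Simon regularity), but the central technical step is missing rather than proved. The heart of the matter is exactly what you defer with the phrase ``this is where the authors' simplification presumably operates'': given a single slice $\Gamma_{t_0}=\partial\Xi_{t_0}$ together with a \emph{static} area-reducing deformation $\{\partial\Omega_s\}$ supported in an open set $U$, one must produce a \emph{competitor sweepout} $\{\partial\Xi'_t\}$, homotopic to $\{\partial\Xi_t\}$ within the class $\Lambda$, which agrees with $\{\partial\Xi_t\}$ outside a slightly larger set and a slightly larger time interval, never gains more than $\eps/4$ in area, and loses at least $\eps/2$ on a definite subinterval. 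This requires ``freezing'' the family near $t_0$ by interpolating $g_t$ with $g_{t_0}$ in an annular transition region $C$ (Lemma \ref{l:freezing}), and without it the assertion that ``the local area--reducing homotopies \ldots could be performed simultaneously'' has no content: the deformation you want to insert is attached to one time slice, while the axioms (s1)--(s3), (sw1)--(sw3) constrain the whole one-parameter family. This is precisely the difficulty Pitts resolved by discretization, and it cannot be waved through.

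Your combinatorial scheme is also not the one that works. Covering $M$ by balls split into disjoint subfamilies does not address the real obstruction, which is temporal: for each $t$ in the critical set $K=\{t:\cH^n(\Gamma^N_t)\geq m_0-1/N\}$ the failure of almost minimality gives a \emph{pair} $(U_{1,t},U_{2,t})$ of far-apart sets with an area-reducing deformation in at least one of them, and one must cover $K$ by time intervals, choosing for each interval one member of the pair so that intervals which overlap in time carry deformations with spatially disjoint supports (this is where the condition $d(U^1,U^2)\geq 4\min\{\diam U^1,\diam U^2\}$ and Lemma \ref{l:distlem} enter). Only then can the locally modified families be glued into a single sweepout whose maximal area drops below $m_0(\Lambda)$. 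A secondary but real gap sits in your replacement step: you minimize ``among competitors agreeing with $\Gamma^k_{t_k}$ on $\partial A$'', but the admissible class must be the constrained homotopic Plateau problem (deformations obeying the $\eps/8$ area bound), for which no regularity theory exists a priori; the paper recovers regularity only through the cone-stretching argument of Lemma \ref{l:concon}, showing that at sufficiently small scales homotopic minimizers are genuine perimeter minimizers, so that Theorem \ref{t:DeGiorgi} and Theorem \ref{t:SScomp} apply. Likewise, the final regularity is not a black box: gluing $\Gamma'$ and $\Gamma''$ across $\partial B_t(x)$ needs the tangent-cone and maximum-principle analysis of Section \ref{s:reg}, and the center point for $n\leq 6$ is handled via Allard's theorem and the maximum principle, not a generic removable-singularity statement.
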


The smoothness assumption on the metric $g$
can be relaxed easily to $C^4$. The ingredients of the
proof where this regularity is needed are:
the regularity theory for
the Plateau problem, the unique continuation for classical
minimal surfaces and the Schoen--Simon compactness theorem.
$C^4$ suffices for all of them.
   

The paper is organized as follows: Section \ref{s:prel} contains
some preliminaries, Section \ref{s:overview} gives an overview
of the proof of Theorem \ref{t:main}, Section \ref{s:am} contains
the existence theory and the Sections \ref{s:rep} and \ref{s:reg} contain
the regularity theory.

\section{Preliminaries}\label{s:prel}

\subsection{Notation}\label{ss:notat}
Throughout this paper our notation will be consistent with
the one introduced in Section 2 of \cite{CD}. We summarize it
in the following table.

\medskip

\begin{tabular}{lll}
$\Inj(M)$ && the injectivity radius of $M$;\\
$B_\rho (x)$, $\ov{B}_\rho (x)$, $\partial B_\rho (x)$ && the open 
and closed ball, the distance sphere in $M$;\\
$\diam (G)$ && the diameter of $G\subset M$;\\
$d(G_1, G_2)$&& $\inf_{x\in G_1, y\in G_2} d (x,y)$\, ;\\
$\B_\rho$ && the ball of radius $\rho$ and centered in $0$ in $\RR^n$;\\
$\exp_x$ && the exponential map in $M$ at $x\in M$;\\
$\an (x,\tau, t)$ && the open annulus 
$B_t (x)\setminus \overline{B}_\tau (x)$;\\
$\An_r (x)$ && the set 
$\{\an (x, \tau, t) \mbox{ where $0<\tau<t<r$}\}$;\\
$\mathcal{X} (M)$, $\mathcal{X}_c (U)$ && smooth vector fields, 
smooth vector fields supported in $U$. 
\end{tabular}

\begin{remark}
In \cite{CD} the authors erroneously define $d$ as the Hausdorff distance. However, for the
purposes of both this and that paper, the correct definition of $d$ is the one
given here, since in both cases the following fact plays a fundamental role: $d (A,B)>0
\Longrightarrow A\cap B=\emptyset$.
Note that, unlike the Hausdorff distance, $d$ is not a distance
on the space of compact sets.   
\end{remark}

\subsection{Caccioppoli sets and Plateau's problem}\label{ss:caccioppoli}
We give here a brief account of the theory
of Caccioppoli sets. A standard reference is
\cite{Giu}. Let $E\subset M$ be a measurable set
and consider its indicator function $\ind_E$ (taking
the value $1$ on $E$ and $0$ on $M\setminus E$). The perimeter
of $E$ is defined as
$$
\per (E):=\sup \left\{\int_M \ind_E\,{\rm div}\, \omega\,:\,
\omega\in \mathcal{X} (M), \, \|\omega\|_{C^0}\leq 1\right\}.
$$
A Caccioppoli set is a set $E$ for which $\per (E)<\infty$.
In this case the distributional derivative $D \ind_E$ is
a Radon measure and $\per E$ corresponds to its total variation.
As usual, the perimeter of $E$ in an open set $U$,
denoted by $\per (E, U)$, is the total variation of $D\ind_E$ in
the set $U$.

We follow De Giorgi and, given a Caccioppoli
set $\Omega\subset M$ and an open
set $U\subset M$, we consider
the class 
\begin{equation}\label{e:CUOmega}
\mathcal{P} (U, \Omega)\;:=\; \{ \Omega'\subset M \mbox{}\;:\;
\Omega'\setminus U = \Omega\setminus U\}\, .
\end{equation}
The theorem below states the fundamental existence
and interior regularity 
theory for De Giorgi's solution of the Plateau problem, which
summarizes results of De Giorgi, Almgren, Simons and Federer
(see \cite{Giu} for the case $M=\RR^{n+1}$ and Section 37 of
\cite{Si} for the general case).

\begin{theorem}\label{t:DeGiorgi} 
Let $U, \Omega\subset M$ be, respectively,
an open and a Caccioppoli set. Then there exists
a Caccioppoli set $\Xi\in \mathcal{P} (U, \Omega)$ minimizing
the perimeter. Moreover, any such minimizer is, in $U$,
an open set whose boundary is smooth outside of a singular
set of Hausdorff dimension at most $n-7$.
\end{theorem}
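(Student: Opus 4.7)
The plan is to prove existence by the direct method in the calculus of variations, and then to quote (or sketch) the interior regularity theory which is by now standard and summarized in \cite{Giu} and Section 37 of \cite{Si}.

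For existence, I would take a minimizing sequence $\{\Omega_k\}\subset\mathcal{P}(U,\Omega)$, i.e.~$\Omega_k\setminus U=\Omega\setminus U$ and $\per(\Omega_k)\to \alpha:=\inf\{\per(\Omega'):\Omega'\in\mathcal{P}(U,\Omega)\}$. Since $\alpha\leq \per(\Omega)<\infty$, the functions $\ind_{\Omega_k}$ are uniformly bounded in $BV(M)$. By the compactness theorem for $BV$ functions on a compact Riemannian manifold, a subsequence (not relabeled) converges in $L^1(M)$ to some $\ind_\Xi$, where $\Xi$ is a measurable set. Since $\ind_{\Omega_k}=\ind_\Omega$ a.e.~on $M\setminus U$, the same holds for $\Xi$, so $\Xi\in\mathcal{P}(U,\Omega)$. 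Lower semicontinuity of the perimeter under $L^1$ convergence gives $\per(\Xi)\leq \liminf_k \per(\Omega_k)=\alpha$, so $\Xi$ is a minimizer.

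For interior regularity, fix a perimeter-minimizing $\Xi$ in $\mathcal{P}(U,\Omega)$. Working in normal coordinates and using the $C^4$ metric, one shows that $\Xi$ is an \emph{almost minimizer} of perimeter in the Euclidean sense on small balls: for every $B_\rho(x)\subset\subset U$ and every $\Xi'$ with $\Xi'\triangle\Xi\subset\subset B_\rho(x)$,
\begin{equation*}
\per(\Xi,B_\rho(x))\leq \per(\Xi',B_\rho(x))+C\rho\cdot\per(\Xi',B_\rho(x)),
\end{equation*}
so the whole Euclidean regularity theory applies. The key ingredients are: the monotonicity formula for the density $\rho^{-n}\per(\Xi,B_\rho(x))$; the existence of tangent cones that are themselves perimeter-minimizing in $\RR^{n+1}$; De~Giorgi's $\varepsilon$-regularity theorem, which states that if the excess of $\partial^*\Xi$ in a ball is sufficiently small then $\partial^*\Xi$ is a $C^{1,\alpha}$ graph in a smaller ball, and hence (by Schauder and the Euler--Lagrange equation) smooth; and the fact that at every point of $\partial^*\Xi$ a tangent cone is a hyperplane, so the reduced boundary is smooth.

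The singular set $\Sing\,\partial\Xi=\partial\Xi\setminus\partial^*\Xi$ is then controlled by the Federer dimension-reduction argument: if there existed a singular point with $\haus^{s}(\Sing\,\partial\Xi)>0$ for some $s>n-7$, one could blow up and iterate to produce a nontrivial perimeter-minimizing cone in $\RR^{k+1}$ for some $k\leq 7$ with a singularity at the origin, contradicting Simons' theorem \cite{Si} that area-minimizing hypercones in $\RR^{k+1}$ with $k\leq 6$ are hyperplanes. The main obstacle, and the part that I would not attempt to reprove, is precisely this Simons/Federer regularity package; I would invoke it as stated in Theorem 37.7 of \cite{Si}, which directly yields the claimed bound on $\dim_\haus \Sing\,\partial\Xi$.
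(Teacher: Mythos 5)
Your proposal is correct and matches the paper's treatment: the paper does not prove Theorem \ref{t:DeGiorgi} but simply cites \cite{Giu} and Section 37 of \cite{Si}, and your outline (direct method with $BV$-compactness and lower semicontinuity for existence, then the almost-minimality reduction to the Euclidean De Giorgi--Federer--Simons regularity and dimension-reduction package) is exactly the standard argument contained in those references.
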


\subsection{Theory of varifolds}
We recall here some basic facts from the 
theory of varifolds; see for instance  
chapters 4 and 8 of \cite{Si} for further information.
Varifolds are a convenient way of 
generalizing surfaces to a category that 
has good compactness properties. An advantage of varifolds, over other
generalizations (like currents), is that they do not allow for
cancellation of mass. This last property is fundamental for the
min--max construction.
If $U$ is an open 
subset of $M$, any finite nonnegative measure on the Grassmannian 
$G (U)$ of unoriented $n$--planes on 
$U$ is said to be an {\em $n$--varifold in $U$}. 
The space of $n$--varifolds is denoted by $\mathcal{V} (U)$  
and we endow it with the topology of
the weak$^*$ convergence in the sense of
measures. Therefore, a sequence $\{V^k\}\subset \mathcal{V}
(U)$ converges to $V$ if 
$$
\lim_{k\to \infty} \int \varphi (x, \pi)\, dV^k (x, \pi)
\;=\; \int \varphi (x, \pi)\, dV (x, \pi)
\qquad \mbox{for every $\varphi\in C_c (G(U))$.}
$$
Here $\pi$ denotes an $n$--plane of $T_x M$.
If $U'\subset U$ and $V\in \mathcal{V} (U)$, then 
$V\res U'$ is the restriction of the measure $V$ to 
$G (U')$. Moreover, $\|V\|$ is the nonnegative measure
on $U$ defined by
$$
\int_U \varphi (x) \,d\|V\| (x)\;=\;
\int_{G(U)} \varphi (x) \,dV (x, \pi)\qquad \forall \varphi\in C_c
(U)\, .
$$
The support of $\| V\|$, denoted by $\supp \,(\|V\|)$, is the 
smallest closed set outside
which $\|V\|$ vanishes identically.
The number $\|V\|(U)$ will be
called the {\em mass of $V$ in $U$}. 

Recall also that an $n$--dimensional rectifiable set is the countable union
of closed subsets of $C^1$ surfaces (modulo sets of $\haus^n$--measure 0).
If $R\subset U$ is an $n$--dimensional rectifiable set 
and $h:R\to \RR_{+}$ is a Borel function, then the {\em
varifold $V$ induced by $R$} is defined by 
\begin{equation}\label{e:defvar}
\int_{G (U)} \varphi (x, \pi) \,dV (x, \pi)=
\int_R h(x) \varphi (x, T_x R) \,d\haus^n (x)\, \quad \forall
\varphi\in C_c (G (U))\, .
\end{equation}
Here $T_x R$ denotes the tangent plane to $R$ in $x$.
If $h$ is integer--valued, then we say that $V$ is an 
{\em integer rectifiable varifold}.
If $\Sigma=\bigcup n_i \Sigma_i$, then 
by slight abuse of notation we use $\Sigma$ for the 
varifold induced by $\Sigma$ via \eqref{e:defvar}.

If $\psi:U\to U'$ is a diffeomorphism and $V\in \mathcal{V} (U)$, 
$\psi_\sharp V\in \mathcal{V} (U')$ is the varifold defined by
$$
\int \varphi(y, \sigma)\, d(\psi_\sharp V) (y, \sigma)
\;=\; \int J \psi (x, \pi)\, \varphi 
(\psi (x), d\psi_x (\pi))\, dV (x, \pi)\, ;
$$
where $J \psi (x, \pi)$ denotes the Jacobian determinant (i.e. the area
element) of the differential $d\psi_x$ restricted to the plane $\pi$;
cf. equation (39.1) of \cite{Si}. Obviously, if $V$ is induced
by a $C^1$ surface $\Sigma$, $V'$ is induced by $\psi (\Sigma)$.

Given $\chi\in\mathcal{X}_c(U)$, let $\psi$ be the isotopy
generated by $\chi$, i.e. ${\textstyle \frac{\partial
    \psi}{\partial t} = \chi( \psi)}$. The   
first and second variation of $V$ with respect to $\chi$ 
are defined as
$$
[\delta V] (\chi) \;=\; \left. \frac{d}{dt} 
(\|\psi (t, \cdot)_\sharp V\|)(U)
\right|_{t=0}\qquad
\mbox{and} \qquad [\delta^2 V] (\chi) \;=\; 
\left. \frac{d^2}{dt^2} 
(\|\psi (t, \cdot)_\sharp V\|)(U)
\right|_{t=0}\, , 
$$
cf. sections 16 and 39 of \cite{Si}. $V$ is said to be
{\em stationary} (resp. {\em stable}) in $U$ if $[\delta V]
(\chi) =0$ (resp. $[\delta^2 V] (\chi)\geq0$)
for every $\chi\in \mathcal{X}_c (U)$. If $V$ is induced
by a surface $\Sigma$ with $\partial \Sigma\subset
\partial U$, $V$ is stationary (resp. stable)
if and only if $\Sigma$ is minimal (resp. stable).

Stationary varifolds in a Riemannian manifold satisfy
the monotonicity formula, i.e. there exists
a constant $\Lambda$ (depending on the ambient manifold 
$M$) such that the function
 \begin{equation}\label{e:MonFor}
f(\rho)\;:=\; e^{\Lambda \rho}\frac{\|V\| (B_\rho (x))}{\omega_n
\rho^n}
\end{equation}
is nondecreasing for every $x$ (see Theorem 17.6 of \cite{Si}; $\Lambda=0$
if the metric of $M$ is flat).
This property allows us to define the
{\em density} of a stationary varifold $V$ at $x$, by
$$
\theta (x, V)\;=\; \lim_{r\to 0} \frac{\|V\| (B_r (x))}{\omega_n r^n}.
$$  

\subsection{Schoen--Simon curvature estimates}\label{ss:SS}
Consider an orientable $U\subset M$. We look here
at closed sets $\Gamma\subset M$ of codimension $1$
satisfying the following regularity assumption:
\begin{itemize}
\item[(SS)] $\Gamma \cap U$ is a smooth
embedded hypersurface outside a closed set $S$
with $\cH^{n-2} (S)=0$.
\end{itemize}
$\Gamma$ induces an integer rectifiable varifold $V$.
Thus $\Gamma$ is said to be minimal
(resp. stable) in $U$ with respect to the metric
$g$ of $U$ if $V$ is stationary (resp. stable).
The following compactness
theorem, a consequence of the Schoen--Simon
curvature estimates (cp. with
Theorem 2 of Section 6 in \cite{SS}),
is a fundamental tool in this note.

\begin{theorem}\label{t:SScomp}
Let $U$ be an orientable open subset of a manifold
and $\{g^k\}$ and $\{\Gamma^k\}$, respectively,
sequences of smooth metrics on $U$ and of  
hypersurfaces $\{\Gamma^k\}$ satisfying (SS). 
Assume that the metrics $g^k$ converge smoothly
to a metric $g$, that each $\Gamma^k$ is 
stable and minimal relative to the metric $g^k$
and that $\sup \cH^n (\Gamma^k)<\infty$. 
Then there are
a subsequence of $\{\Gamma^k\}$ (not relabeled),
a stable stationary varifold $V$ in $U$ (relative to the metric $g$)
and a closed set $S$ of Hausdorff dimension at most $n-7$
such that
\begin{itemize}
\item[(a)] $V$ is a smooth embedded hypersurface in $U\setminus S$;
\item[(b)] $\Gamma^k\to V$ in the sense of varifolds in $U$;
\item[(c)] $\Gamma^k$ converges smoothly to $V$ on every
$U'\subset\subset U\setminus S$.
\end{itemize}
\end{theorem}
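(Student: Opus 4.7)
The plan is to combine varifold weak-$^*$ compactness, transfer of stationarity and stability to the limit metric, and the Schoen--Simon curvature estimates, and then invoke the Schoen--Simon regularity theorem to bound the dimension of the singular set. First I would use the uniform bound $\sup_k \cH^n(\Gamma^k) < \infty$ together with the Banach--Alaoglu theorem to extract a subsequence of the induced integer rectifiable varifolds $V^k$ converging in $\mathcal{V}(U)$ to some $V$. To see $V$ is stationary and stable with respect to $g$, fix $\chi \in \mathcal{X}_c(U)$ and note that the first and second variations along $\chi$ have integrands on $G(U)$ depending smoothly on the ambient metric. Since $V^k$ is stationary and stable with respect to $g^k$, and $g^k \to g$ in $C^2$ on $\supp \chi$, the discrepancy between variations with respect to $g^k$ and $g$ is bounded by $C\|g^k - g\|_{C^2}\|V^k\|(\supp \chi)$, which vanishes. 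Passing to the limit yields $[\delta V](\chi) = 0$ and $[\delta^2 V](\chi) \geq 0$. Allard's closure theorem for integer rectifiable varifolds, applied with the uniform mass and vanishing first variation, shows $V$ is integer rectifiable.

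Next I would exploit the Schoen--Simon curvature estimates to upgrade weak to smooth convergence. These provide constants $\beta_0 = \beta_0(n,g)>0$ and $C = C(n,g)$ such that any $\Gamma$ satisfying (SS), stable and minimal in a geodesic ball $B_{2r}(x)$ with $\cH^n(\Gamma \cap B_{2r}(x)) \leq \beta_0 r^n$, obeys a pointwise bound $\sup_{\Gamma \cap B_{r/2}(x)} |A_\Gamma|^2 \leq C r^{-2}$ on the second fundamental form. Let $S$ denote the closed set of points in $U$ at which smooth subconvergence fails. If $x \in U$ satisfies $\|V\|(B_r(x)) < \beta_0 r^n / 2$ for some small $r$, then by varifold convergence the same inequality holds for $V^k$ for $k$ large, so the Schoen--Simon estimate applies uniformly to $\Gamma^k$ in $B_{r/4}(x)$. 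Representing the $\Gamma^k$ graphically over suitable tangent planes and using Arzelà--Ascoli gives smooth convergence on a neighborhood of $x$; the limit is automatically a smooth, embedded, stable minimal hypersurface with respect to $g$. Hence $V$ coincides with such a hypersurface on $U \setminus S$.

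It remains to bound $\dim_{\haus} S \leq n - 7$. This is the main obstacle and the deepest ingredient: the cleanest route is to invoke the Schoen--Simon regularity theorem directly, which asserts that any stable stationary integer rectifiable varifold in a Riemannian manifold is a smooth embedded hypersurface off a closed set of Hausdorff dimension at most $n - 7$. Since we have verified $V$ is such a varifold, we deduce the desired bound on $S$, and the three conclusions (a), (b), (c) follow at once. The underlying proof of that regularity theorem ultimately rests on Federer's dimension reduction applied to tangent cones of $V$, combined with the \cite{SSY}-type stability-based curvature estimates extended to all dimensions in \cite{SS}; for the purposes of this note we treat it as a black box.
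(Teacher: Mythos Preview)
The paper does not prove this theorem. It is stated in the preliminaries as a known consequence of the Schoen--Simon curvature estimates, with an explicit reference to Theorem~2 of Section~6 in \cite{SS}, and is then used as a black box throughout the rest of the argument. So there is no proof in the paper against which to compare your proposal.

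Your outline is broadly the right shape for how the result is derived from \cite{SS}, but one point deserves care. The curvature estimate you quote --- a pointwise bound $|A_\Gamma|^2 \leq C r^{-2}$ under a \emph{smallness} hypothesis $\cH^n(\Gamma \cap B_{2r}(x)) \leq \beta_0 r^n$ --- is not the form in which Schoen and Simon state their result, and in dimensions $n \geq 7$ no such uniform pointwise curvature bound can hold (stable minimal hypercones with isolated singularities exist). What \cite{SS} actually establish is an $\eps$-regularity theorem (closeness to a single plane, not merely bounded mass, forces graphical decomposition with estimates) together with, via Federer dimension reduction, the regularity theorem bounding the singular set. The cleanest route is therefore the one you already indicate in your final paragraph: first show that $V$ is a stationary, stable, integer rectifiable varifold and invoke the Schoen--Simon regularity theorem directly to obtain (a); then, at each regular point of $V$, apply the $\eps$-regularity theorem to the $\Gamma^k$ (which are eventually varifold-close to the tangent plane of $V$ there) to deduce graphical, hence smooth, convergence, giving (c). Your set $S$ of points where smooth convergence fails then coincides with $\Sing V$ as a consequence, rather than serving as its definition.
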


\begin{Rem}\label{r:SSconvergence}
The precise meaning of (c) is as follows:
fix an open $U''\subset U'$ where the varifold $V$ is an integer
multiple $N$ of a smooth oriented surface $\Sigma$. Choose
a normal unit vector field on $\Sigma$ (in the metric $g$)
and corresponding 
normal coordinates in a tubular neighborhood.
Then, for $k$ sufficiently large, $\Gamma^k\cap U''$ 
consists of $N$ disjoint smooth surfaces $\Gamma^k_i$ 
which are graphs of functions
$f^k_i\in C^\infty (\Sigma)$ in the chosen coordinates.
Assuming, w.l.o.g.,
$f^k_1\leq f^k_2 \leq \ldots \leq f^k_N$,
each sequence $\{\Gamma^k_i\}_k$
converges to $\Sigma$ in the sense of Remark \ref{r:smooth_conv}.
\end{Rem}

Note the following obvious corollary of
Theorem \ref{t:SScomp}: if $\Gamma$
is a stationary and stable surface satisfying (SS), then
the Hausdorff dimension of $\Sing\, \Gamma$ is, in fact,
at most $n-7$. Since we will deal very often
with this type of surfaces, we will use the following 
notational convention.

\begin{definition}\label{d:convention}
 Unless otherwise specified, a {\em hypersurface}
$\Gamma\subset U$ is a closed set of codimension $1$
such that 
$\overline{\Gamma}\setminus \Gamma \subset \partial U$
and $\Sing\, \Gamma$ has Hausdorff dimension
at most $n-7$. 
The words ``stable'' and ``minimal'' are then used as explained
at the beginning of this subsection. For instance,
the surface $\Sigma$ of Theorem \ref{t:existence} is
a {\em minimal hypersurface}.
\end{definition}

\section{Proof of Theorem \ref{t:main}}
\label{s:overview}

\subsection{Isotopies and stationarity}
It is easy to see that not all
min--max sequences converge to stationary
varifolds (see \cite{CD}). In general, for any minimizing sequence
$\{\{\Gamma_t\}^k\}$ there is at least one min--max sequence converging
to a stationary varifold. For technical reasons, it is useful
to consider minimizing sequences $\{\{\Gamma_t\}^k\}$ with the 
additional property
that {\em any} corresponding min--max sequence converges
to a stationary varifold. The existence 
of such a sequence, which roughly speaking follows from ``pulling
tight'' the surfaces of a minimizing sequence, is an important 
conceptual step and
goes back to Birkhoff in the case of geodesics and 
to the fundamental work of Pitts in the general case 
(see also \cite{CM1} and \cite{CM2} for other applications 
of these ideas). In order to state it, we need
some terminology.

\begin{definition}\label{d:isotopies}
Given a smooth map $F: [0,1]\to \mathcal{X} (M)$,
for any $t\in [0,1]$
we let $\Psi_t: [0,1]\times M\to M$ be the one--parameter
family of diffeomorphisms generated by the vectorfield $F (t)$.
If $\{\Gamma_t\}_{t\in [0,1]}$ is a sweepout,
then $\{\Psi_t (s,\Gamma_t)\}_{(t,s)\in [0,1]^2}$ is
a homotopy between $\{\Gamma_t\}$ and
$\{\Psi_t (1, \Gamma_t)\}$. These will be called
{\em homotopies induced by ambient isotopies}.
\end{definition}

We recall that the weak$^\ast$ 
topology on the space $\mathcal{V} (M)$ (varifolds with bounded mass) is metrizable
and we choose a metric $\cD$ which induces it.
Moreover, let
$\cV_s\subset \mathcal{V} (M)$ be the 
(closed) subset of stationary varifolds.

\begin{propos}\label{p:stationary}
Let $\Lambda$ be a family of sweepouts which
is closed under homotopies induced by ambient isotopies.
Then there exists a minimizing sequence $\{\{\Gamma_t\}^k\}
\subset\Lambda$ such that, if $\{\Gamma_{t_k}^k\}$ 
is a min-max sequence, then $\cD(\Gamma_{t_k}^k,\cV_s)\to 0$.
\end{propos}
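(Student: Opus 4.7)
The plan is to implement the classical \emph{pull-tight} procedure of Birkhoff (for geodesics) and Pitts (in general; cf.\ also \cite{CD}, \cite{CM1}, \cite{CM2}): assign continuously to each varifold $V$ a vector field $H_V\in\mathcal{X}(M)$ witnessing non-stationarity, then apply the induced ambient isotopy simultaneously to every slice of a minimizing sweepout. Fix $L:=2 m_0(\Lambda)$ and $K:=\{V\in\mathcal{V}(M):\|V\|(M)\leq L\}$, which is weak$^*$-compact and metrizable. For each $W\in K\setminus\cV_s$ pick $\chi_W\in\mathcal{X}(M)$ with $\|\chi_W\|_{C^1}\leq 1$ and $[\delta W](\chi_W)<0$; weak$^*$-continuity of $V\mapsto [\delta V](\chi_W)$ for fixed $\chi_W$ supplies an open neighborhood $U_W\ni W$ on which this functional stays below $[\delta W](\chi_W)/2$. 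Paracompactness of $K\setminus\cV_s$ yields a locally finite refinement $\{U_{W_i}\}$ with subordinate partition of unity $\{\phi_i\}$, and one sets $H_V:=\rho(V)\sum_i\phi_i(V)\chi_{W_i}$ where $\rho:K\to[0,1]$ is continuous with $\rho^{-1}(0)=\cV_s$. Then $H$ is continuous, uniformly $C^1$-bounded, vanishes exactly on $\cV_s$, and $V\mapsto[\delta V](H_V)$ is continuous and strictly negative off $\cV_s$. Hence, for every $r>0$, compactness produces $c(r)>0$ with $[\delta V](H_V)\leq -c(r)$ whenever $\cD(V,\cV_s)\geq r$, and $c(r)\to 0$ as $r\to 0^+$.

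\textbf{Tightening a sweepout.} For $\{\Gamma_t\}\in\Lambda$ with $\cF(\{\Gamma_t\})\leq L$, the continuity of $t\mapsto\Gamma_t$ (from (s2)--(s3)) makes $t\mapsto H_{\Gamma_t}$ continuous into $\mathcal{X}(M)$. Mollifying in $t$ produces a smooth $F:[0,1]\to\mathcal{X}(M)$ uniformly $C^1$-close to $t\mapsto H_{\Gamma_t}$; letting $\Psi_t$ denote the flow of $F(t)$, Definition \ref{d:isotopies} shows that $\Phi^{s_0}(\{\Gamma_t\}):=\{\Psi_t(s_0,\Gamma_t)\}$ is homotopic to $\{\Gamma_t\}$ via an ambient isotopy and therefore belongs to $\Lambda$. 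A first-variation Taylor expansion uniform over sweepouts with $\cF\leq L$ then implies that, for $s_0>0$ sufficiently small, no slice increases in area and every slice with $\cD(\Gamma_t,\cV_s)\geq r$ loses at least $\tfrac12 s_0\, c(r)$ in area; moreover $\cD(\Psi_t(s_0,\Gamma_t),\Gamma_t)=O(s_0)$ uniformly in $t$ and in the sweepout.

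\textbf{Diagonal extraction.} Fix any minimizing $\{\{\Gamma_t\}^k\}\subset\Lambda$. For each $j\in\N$ pick admissible $s_j\to 0$ and set $\{\{\Gamma_t\}^{k,j}\}:=\Phi^{s_j}(\{\{\Gamma_t\}^k\})$, which is again minimizing in $\Lambda$. For fixed $j$, any sequence $\Gamma^{k,j}_{t_k}$ with $\cH^n(\Gamma^{k,j}_{t_k})\to m_0(\Lambda)$ satisfies, by the area-decrease estimate and $\cH^n(\Gamma^k_{t_k})\leq\cF(\{\Gamma_t\}^k)\to m_0(\Lambda)$, that $c(\cD(\Gamma^k_{t_k},\cV_s))\to 0$, hence $\cD(\Gamma^k_{t_k},\cV_s)\to 0$ and so $\cD(\Gamma^{k,j}_{t_k},\cV_s)=O(s_j)$. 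Choosing $k_j\to\infty$ large enough that every slice of $\{\Gamma_t\}^{k_j,j}$ with area above $m_0(\Lambda)-1/j$ lies within $\cD$-distance $1/j$ of $\cV_s$, the diagonal sequence $\{\{\Gamma_t\}^{k_j,j}\}_j$ is minimizing and satisfies the required property.

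\textbf{Main obstacle.} The crux is the first step: the partition-of-unity construction must yield a continuous $H$ whose first-variation functional $V\mapsto[\delta V](H_V)$ is itself continuous and strictly negative off $\cV_s$, so that compactness produces a genuine positive modulus $c(r)$. A secondary technicality is propagating the $C^1$-smallness through the mollification of $t\mapsto H_{\Gamma_t}$ without destroying the quantitative area decrease; this is routine and is treated in \cite{P} and \cite{CD}.
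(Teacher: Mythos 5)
Your overall strategy is the classical pull-tight argument, i.e.\ exactly the approach of the proof the paper relies on (it gives no proof itself but defers to Proposition 4.1 of \cite{CD}): build a continuous map $V\mapsto H_V$ on the compact set of varifolds with bounded mass, with $[\delta V](H_V)<0$ off $\cV_s$, and deform a minimizing sequence by the induced ambient isotopies. Your first step (partition of unity, continuity of $V\mapsto[\delta V](H_V)$, the modulus $c(r)$) is essentially sound. The gap lies in how you deform and in the final diagonal. First, with a single flow time $s_0$ the claim ``no slice increases in area and every slice with $\cD(\Gamma_t,\cV_s)\geq r$ loses at least $\tfrac12 s_0c(r)$'' does not hold uniformly: the second-order remainder is of size $Cs_0^2$ (and already requires uniform $C^2$, not just $C^1$, bounds on $H_V$, which your normalization does not give), so the admissible $s_0$ depends on $r$ through $c(r)$, and slices very close to $\cV_s$ may even gain area of order $s_0^2$. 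This is precisely what forces you to send $s_j\to 0$, and that is where the argument breaks.

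Once $s_j\to 0$, the guaranteed drop $\tfrac12 s_jc(r)$ degenerates, and the property actually needed --- for every $\eps>0$ there are $\delta>0$ and $N$, \emph{independent of the index}, such that for $j\geq N$ every slice with area $>m_0(\Lambda)-\delta$ lies within $\eps$ of $\cV_s$ --- cannot be extracted. Your choice of $k_j$ (``every slice with area above $m_0(\Lambda)-1/j$ lies within $1/j$ of $\cV_s$'') is both unjustified and insufficient. Unjustified: a slice of the original sweepout with area, say, $m_0(\Lambda)-\tfrac{1}{2j}$ is subject to no constraint from the min--max structure, and a flow of time $s_j$ lowers its area by at most $O(s_j)$ and need not move it anywhere near $\cV_s$, no matter how large $k$ is; your own estimate only yields $c(r)\lesssim (js_j)^{-1}$, which is vacuous unless $js_j\to\infty$, and even then gives a modulus, not the bound $1/j$. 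Insufficient: a min--max sequence with $\cH^n(\Gamma^{(j)}_{t_j})=m_0(\Lambda)-2/j$ converges in area to $m_0(\Lambda)$ but is never covered by the threshold $m_0(\Lambda)-1/j$, so nothing prevents it from staying at a fixed distance from $\cV_s$. The repair is the one implemented by Pitts and in \cite{CD}: make the deformation time depend continuously on the slice (on the varifold itself), chosen via an annular decomposition of the varifold space so that every varifold at distance $\geq r$ from $\cV_s$ loses a \emph{fixed} amount $\delta(r)>0$ of mass, independent of the sweepout, while varifolds near $\cV_s$ are displaced only slightly; then a single tightening of the minimizing sequence already yields, for $k$ large, the uniform implication ``area $>m_0(\Lambda)-\delta\Rightarrow$ distance $<\eps$'', and no vanishing-parameter diagonal is needed.
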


This Proposition is Proposition 4.1 of \cite{CD}.
Though stated for the case $n=2$, this assumption, in fact, is never used in the proof given in that paper. Therefore
we do not include a proof here.

\subsection{Almost mimimizing varifolds}
It is well known that a stationary varifold can be far from regular.
To overcome this issue, we introduce the notion of
almost minimizing varifolds. 

\begin{definition}\label{d:am}
 Let $\eps>0$ and $U\subset M$ open. A boundary 
$\partial \Omega$ in $M$ is called 
\textit{$\eps$-almost minimizing} ($\eps$-a.m.) in $U$ if there is
NO $1$-parameter family of boundaries $\{\partial \Omega_t\}$, 
$t\in [0,1]$ satisfying the following properties:
\begin{eqnarray}
&&\mbox{Properties (s1), (s2), (s3), (sw1) and (sw3)
of Definition \ref{d:sweep} hold;}\label{e:am1}\\
&&\mbox{$\Omega_0=\Omega$ and $\Omega_t\setminus U =
\Omega\setminus U$ for every $t$;}\label{e:am2}\\
&&\mbox{$\cH^n(\partial \Omega_t)\leq \cH^n(\partial \Omega)+
\frac{\eps}{8}$ for all $t\in  [0,1]$;}\label{e:am3}\\
&&\mbox{$\cH^n(\partial \Omega_1)\leq \cH^n(\partial \Omega)-
\eps$.}\label{e:am4}
\end{eqnarray}
A sequence $\{\partial \Omega^k\}$ of hypersurfaces is called 
{\em almost minimizing in $U$} if each 
$\partial \Omega^k$ is $\eps_k$-a.m. 
in $U$ for some sequence $\eps_k\to 0$. 
\end{definition} 

Roughly speaking, $\partial \Omega$ is a.m. if any deformation
which eventually brings down its area is forced to pass
through some surface which has sufficiently larger area. 
A similar notion was introduced
for the first time in the pioneering work of Pitts
and a corresponding one is given in \cite{Sm} using
isotopies (see Section 3.2 of \cite{CD}). 
Following in part Section 5 of \cite{CD} (which uses a 
combinatorial argument inspired by a general one of 
\cite{Alm} reported in \cite{P}), we prove in
Section \ref{s:am} the following existence
result.

\begin{propos}\label{p:almost1}
Let $\Lambda$ be a homotopically closed family of sweepouts. 
There are a function $r:M\to \RR_+$ and a min-max sequence 
$\Gamma^k=\Gamma^k_{t_k}$ such that
\begin{itemize}
\item[(a)] $\{\Gamma^k\}$ is a.m. in every $An\in\An_{r(x)}(x)$
with $x\in M$;
\item[(b)] $\Gamma^k$ converges to a stationary varifold $V$ as 
$k\to \infty$.
\end{itemize}
\end{propos}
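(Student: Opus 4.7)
The plan is a proof by contradiction via a combinatorial pigeonhole scheme originating with Almgren and Pitts, organized as in Section~5 of \cite{CD}. I would begin with the minimizing sequence $\{\{\Gamma_t\}^k\}_k \subset \Lambda$ furnished by Proposition~\ref{p:stationary}, so that any min--max subsequence automatically converges (up to subsequence) to a stationary varifold, and set $\eps_k := \mathcal{F}(\{\Gamma_t\}^k) - m_0(\Lambda)\to 0^+$. The task is to extract times $t_k$ and a radius function $r:M\to \RR_+$ realizing (a); property (b) will then be free from the choice of the base sequence. Assume, towards a contradiction, that no such extraction exists: for every candidate $r$, some $\Gamma^k_t$ with $\cH^n(\Gamma^k_t) > m_0(\Lambda) - \eps_k/2$ fails the $\eps_k$-a.m.\ property in \emph{every} annulus in $\An_{r(x)}(x)$ for some $x \in M$.

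The combinatorial core is the following. At each $x$ I would fix $N = N(n)$ concentric, pairwise disjoint annuli $A_1(x),\ldots,A_N(x)\in \An_{\rho(x)}(x)$ for $\rho(x)$ sufficiently small, and invoke a Besicovich-type covering to partition $M$ into at most $N$ classes within each of which the chosen annuli at distinct centers are mutually disjoint. For each ``bad'' time $t \in K_k := \{t : \cH^n(\Gamma^k_t) > m_0(\Lambda) - \eps_k/2\}$ and each bad point $x$, pigeonholing on the $N$ concentric annuli produces an annulus $A(x,t)$ in which $\Gamma^k_t$ fails the $\eps_k$-a.m.\ property, together with a $1$-parameter deformation localized in $A(x,t)$ which reduces area by $\eps_k$ while raising it by at most $\eps_k/8$ en route. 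Since annuli at disjoint centers are disjoint, these local deformations compose simultaneously within each class and then successively across the $N$ classes; the resulting family $\{\tilde\Gamma_t\}^k$ is homotopic to $\{\Gamma_t\}^k$ and, by homotopic closedness of $\Lambda$, lies in $\Lambda$. Straightforward bookkeeping gives
$$
\cH^n(\tilde\Gamma^k_t) \;\leq\; \cH^n(\Gamma^k_t) - \eps_k + N\cdot\tfrac{\eps_k}{8} \qquad\text{for } t \in K_k,
$$
while for $t \notin K_k$ one still has $\cH^n(\tilde\Gamma^k_t) \leq m_0(\Lambda) - \eps_k/2 + N\eps_k/8$. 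For $k$ large both bounds combine to $\mathcal{F}(\{\tilde\Gamma_t\}^k) < m_0(\Lambda)$, contradicting $m_0 = \inf_\Lambda \mathcal{F}$.

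The main obstacle is the continuity-in-$t$ of the composed deformation: the non-a.m.\ certificates $\{\partial\Omega_s(x,t)\}$ depend on $t$ only pointwise, yet the output must be a genuine generalized family satisfying (s1)--(s3) of Definition~\ref{d:sweep}. The standard remedy is to discretize $[0,1]$ finely, use the same certificate and the same annuli on each subinterval (controlling the slippage via the continuity of $t\mapsto \Gamma^k_t$), and smoothly interpolate the deformations across partition endpoints; the (small) cost of the interpolation is absorbed into the $\eps_k/8$ budget by taking the partition fine enough. Combined with the pigeonhole on $N$ disjoint annuli, this is precisely where the lighter framework of \cite{CD} is used to bypass the heavier discrete machinery of \cite{P}.
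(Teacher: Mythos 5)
There is a genuine gap, and it sits at the very start: your contradiction hypothesis is not the negation of the statement. The proposition asserts the existence of \emph{some} function $r$ and \emph{some} min--max sequence which is a.m.\ in \emph{every} annulus of $\An_{r(x)}(x)$; its negation only yields, for each candidate $r$ and each candidate sequence, \emph{one} bad annulus at \emph{one} point, not (as you assume) a time-slice that fails the a.m.\ property in every annulus around some bad center $x(t)$. Because a slice-wise statement of the form ``$\Gamma^N_{t_N}$ is a.m.\ in all small annuli'' cannot be extracted directly by this kind of variational argument, the paper does not argue by contradiction on the annulus statement at all. It first proves a cleaner intermediate claim (Proposition \ref{p:combinatorial}): some slice with nearly maximal area is $1/N$-a.m.\ in at least one member of \emph{every} pair $(U^1,U^2)\in\cC\cO$ of well-separated open sets. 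The negation of that claim at a fixed time is usable — failure in \emph{both} members of some pair — and this double failure is exactly what makes the combinatorics work. The annulus statement is then deduced from the pair statement by a separate, non-variational dichotomy: using the pairs $(B_r(x), M\setminus\overline B_{9r}(x))$, either the sequence is a.m.\ in all small balls, or there is (after a diagonal extraction) a single exceptional point $x$ near which only complements of balls are good; the function $r$ is then chosen so that all annuli avoid $x$. This exceptional-point analysis is the entire reason the proposition is phrased with annuli rather than balls, and it is absent from your proposal.

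The combinatorial core is also misplaced. The disjointness one needs is not across space at a fixed time (your Besicovitch classes and $N$ concentric annuli), but across \emph{overlapping time intervals}: the covering of $K$ is refined so that each $t$ lies in at most two intervals, and for two overlapping intervals the associated deformation regions must be at positive distance — which is possible precisely because each bad time supplies failure in \emph{both} members of a separated pair, so Lemma \ref{l:distlem} lets one select compatible sets. With only one bad annulus per time (all the correct negation gives), two nearby times can force overlapping deformation regions and the construction of the competitor sweepout breaks down. Moreover your bookkeeping does not close even on its own terms: with one gain of $\eps_k$ against $N$ excursions of $\eps_k/8$ you get $-\eps_k+N\eps_k/8\ge 0$ for $N\ge 8$, and off $K_k$ the bound $m_0-\eps_k/2+N\eps_k/8$ exceeds $m_0$ for $N\ge 4$; in the paper each time is affected by at most two deformations, giving the decisive $-\tfrac1{2N}+\tfrac1{4N}$ on $K$ and $+\tfrac2{4N}$ off $K$. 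Finally, the interpolation you call ``the standard remedy'' is in fact the main technical novelty of this section (the freezing Lemma \ref{l:freezing}); identifying the issue is good, but it cannot be waved through as standard.
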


In this part we introduce, however, a new ingredient.
The proof of Proposition \ref{p:almost1} has
a variational nature: assuming the nonexistence of such
a minmax sequence we want to show that on an appropriate minimizing
sequence $\{\{\Gamma_t\}^k\}$, the energy $\mathcal{F}
(\{\Gamma_t\}^k)$ can be lowered by a fixed amount, contradicting
its minimality. Note, however, that we have one--parameter families
of surfaces, whereas the variational notion of Definition
\ref{d:am} focuses on a single surface. Pitts (who
in turn has a stronger notion of almost minimality) avoids
this difficulty by considering discretized families and this,
in our opinion, makes his proof quite hard. Instead, our
notion of almost minimality
allows us to stay in the smooth category: the key technical
point is the ``freezing'' presented in Section \ref{ss:freezing}
(cp. with Lemma \ref{l:freezing}).

\subsection{Replacements}\label{ss:rep} We complete the program 
in Sections \ref{s:rep} and \ref{s:reg} 
showing that our notion of 
almost minimality is still sufficient to prove regularity.
As a starting point, as in the theory of Pitts, we consider
{\em replacements}.

\begin{definition}
Let $V\in\cV(M)$ be a stationary varifold and $U\subset M$ be 
an open set. A stationary varifold $V'\in \cV(M)$ is called a 
\textit{replacement for $V$ in $U$} if
$V'=V$ on $M\setminus \bar{U}$, $\|V'\|(M)=\|V\|(M)$
and $V\res U$ is a stable minimal hypersurface $\Gamma$. 
\end{definition}

We show in Section \ref{s:rep} that almost minimizing
varifolds do posses replacements.

\begin{propos}\label{p:replacement}
Let $\{\Gamma^j\}$, $V$ and $r$ be as in Proposition
\ref{p:almost1}. Fix $x\in M$ and consider 
an annulus $An\in \An_{r(x)} (x)$. Then there
are a varifold $\tilde{V}$, a sequence $\{\tilde{\Gamma}^j\}$
and a function $r':M\to \RR_+$ such that
\begin{itemize}
\item[(a)] $\tilde{V}$ is a replacement for $V$ in $An$
and $\tilde{\Gamma}^j$ converges to $\tilde{V}$ in the
sense of varifolds;
\item[(b)] $\tilde{\Gamma}^j$ is a.m. in every $An'\in \An_{r'(y)} (y)$
with $y\in M$;
\item[(c)] $r'(x)=r(x)$.
\end{itemize}
\end{propos}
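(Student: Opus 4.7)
The plan is to realize $\tilde\Gamma^j$ as the boundary of a Plateau minimizer that agrees with $\Omega^j$ outside $An$, extract a varifold limit via the Schoen--Simon compactness theorem, and then verify almost minimality of $\tilde\Gamma^j$ for a suitable $r'$. The new ingredient is an \emph{interpolation} (``freezing'') construction that turns the a.m.\ condition on $\Gamma^j$ into mass continuity for the replacement, and that will be reused to propagate a.m.\ to $\tilde\Gamma^j$. Concretely, after perturbing $An=\an(x,\tau,t)$ slightly (possible for a.e.\ $\tau,t<r(x)$), we may assume $\|V\|(\partial An)=0$. For each large $j$, Theorem \ref{t:DeGiorgi} supplies a perimeter minimizer $\Xi^j\in\mathcal{P}(An,\Omega^j)$, and we set $\tilde\Gamma^j:=\partial\Xi^j$; this is a stable minimal hypersurface in $An$ with singular set of Hausdorff dimension $\leq n-7$, and $\tilde\Gamma^j=\Gamma^j$ outside $\bar{An}$.

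The decisive step, and in my view the main obstacle, is the construction of a generalized family $\{\Omega^j_s\}_{s\in[0,1]}$ interpolating $\Omega^j_0=\Omega^j$ and $\Omega^j_1=\Xi^j$, with $\Omega^j_s\setminus An=\Omega^j\setminus An$, satisfying (s1)--(s3), (sw1), (sw3) of Definition \ref{d:sweep}, and
\[
\cH^n(\partial\Omega^j_s)\;\leq\;\cH^n(\Gamma^j)+\tfrac{\eps_j}{8}\qquad\forall\,s\in[0,1].
\]
The idea is to freeze $\Gamma^j$ on an increasing nested family of subannuli exhausting $An$ and to solve constrained Plateau problems on the complement, with perimeter continuity in $s$ coming from the stability of Plateau minimizers under continuous variation of the frozen data. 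Given $\{\Omega^j_s\}$, Definition \ref{d:am} applied to $\Gamma^j$ in $An$ forbids $\cH^n(\tilde\Gamma^j)\leq\cH^n(\Gamma^j)-\eps_j$, while the minimizing property gives the reverse inequality, so
\[
|\cH^n(\tilde\Gamma^j)-\cH^n(\Gamma^j)|\to 0.
\]

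With uniform mass bounds and stability of $\tilde\Gamma^j$ in $An$, Theorem \ref{t:SScomp} produces, along a subsequence, a stable stationary varifold $\tilde V_{An}$ in $An$ with $\tilde\Gamma^j\to\tilde V_{An}$ as varifolds in $An$, smooth outside a codimension--$7$ set. Outside $\bar{An}$ one has $\tilde\Gamma^j=\Gamma^j\to V$. Setting $\tilde V:=V\res(M\setminus\bar{An})+\tilde V_{An}$, the choice $\|V\|(\partial An)=0$ removes any ambiguity at the seam, and the mass control of the previous paragraph yields $\|\tilde V\|(M)=\|V\|(M)$. Stationarity of $\tilde V$ is verified on vector fields supported in $An$ or in $M\setminus\bar{An}$ separately, with a cutoff argument using once more $\|V\|(\partial An)=0$ to kill boundary contributions; Theorem \ref{t:SScomp} identifies $\tilde V\res An$ as a stable minimal hypersurface, so $\tilde V$ is a replacement for $V$ in $An$ in the sense of Section \ref{ss:rep}.

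Finally, choose $r'$ with $r'(x):=r(x)$, and for $y\neq x$ take $r'(y)>0$ so small that every $An''\in\An_{r'(y)}(y)$ is contained either in $An$, where $\tilde\Gamma^j$ is perimeter-minimizing and hence a.m.\ for any $\eps$, or in $M\setminus\bar{An}$, where $\tilde\Gamma^j=\Gamma^j$ and a.m.\ is inherited from Proposition \ref{p:almost1}. For $An''=\an(x,\tau'',t'')\in\An_{r(x)}(x)$ straddling $\partial An$, any candidate family $\{\Xi^j_s\}$ witnessing a failure of a.m.\ for $\tilde\Gamma^j$ in $An''$ can be prepended by the interpolation $\{\Omega^j_s\}$ above to produce a family for $\Omega^j$ supported in the enlarged annulus $An^\star:=\an(x,\min(\tau,\tau''),\max(t,t''))\in\An_{r(x)}(x)$, whose perimeter never exceeds $\cH^n(\Gamma^j)+\eps/8$ (once $\eps_j<\eps$) and whose final perimeter is at most $\cH^n(\Gamma^j)-\eps$, contradicting the a.m.\ of $\Gamma^j$ in $An^\star$. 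The same enlargement trick, together with small $r'$, takes care of points on $\partial An$; overall, the freezing construction of Step~2 is the lone piece of machinery that both yields mass continuity and propagates the a.m.\ property across $\partial An$.
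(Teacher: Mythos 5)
Your plan hinges on taking $\tilde\Gamma^j=\partial\Xi^j$ with $\Xi^j$ the \emph{unconstrained} De Giorgi minimizer in $\mathcal{P}(An,\Omega^j)$, and then constructing an admissible family $\{\Omega^j_s\}$ from $\Omega^j$ to $\Xi^j$ staying within $\cH^n(\Gamma^j)+\eps_j/8$. That interpolation is precisely the point where the argument breaks, and your sketch does not deliver it. Solving constrained Plateau problems over a one--parameter family of shrinking frozen regions does give slices with perimeter $\leq\cH^n(\Gamma^j)$ (since $\Omega^j$ is always a competitor), but it does not produce a family satisfying (s1)--(s3) of Definition \ref{d:sweep}: minimizers are not unique and can jump discontinuously as the domain varies (and bridging two distinct minimizers at a jump time is again the very interpolation problem you are trying to solve); the slices need not attach smoothly along the moving sphere; and their singular sets have dimension up to $n-7$, not the finite point sets that (s1) demands, so the family is not even admissible for Definition \ref{d:am} and yields no contradiction with the a.m.\ property. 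If such a controlled homotopy to the global Plateau minimizer were available, most of the min--max regularity theory would collapse to a few lines; the paper's route is designed exactly to avoid it: one minimizes perimeter \emph{within the homotopy class} $\mathcal{H}(\Omega^j,An)$ (the $(8j)^{-1}$--homotopic Plateau problem), so the homotopy to $\tilde\Gamma^j=\Gamma^{j,k(j)}$ exists by fiat, and regularity of the limit $\tilde\Omega^j$ is recovered only \emph{locally}, in small balls, via the cone (``blow down--blow up'') comparison of Lemma \ref{l:concon} together with Theorem \ref{t:DeGiorgi} and Schoen--Simon compactness. Everything you later use the interpolation for (mass convergence, a.m.\ of $\tilde\Gamma^j$ in annuli straddling $\partial An$ via prepending) inherits this gap.

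A second, independent problem is your verification that $\tilde V$ is stationary. Stationarity in $An$ and in $M\setminus\overline{An}$ separately, plus $\|V\|(\partial An)=0$, does not imply stationarity across the seam: two smooth minimal pieces can meet $\partial An$ with mismatched angles (think of two half--planes meeting along a line), each side stationary, the seam of $\cH^n$--measure zero, and yet the first variation is a nonzero measure concentrated on $\partial An$; no cutoff argument removes it. The paper instead proves stationarity in a slightly larger annulus $An'\supset\supset An$ by contradiction: a vector field with strictly negative first variation would, after integrating the flow, produce an isotopy lowering $\cH^n(\tilde\Gamma^j)$ linearly with essentially no intermediate gain, contradicting the a.m.\ property of $\tilde\Gamma^j$ in $An'$ --- which is available exactly because $\tilde\Gamma^j$ was chosen inside the homotopy class. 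You need both ingredients (the homotopic minimization and the first--variation/a.m.\ argument at the seam) to close the proof.
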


The strategy of the proof is 
the following. Fix an annulus $An$. We would like to substitute
$\Gamma^j=\partial \Omega^j$ in $An$ with the surface 
minimizing the area among all those
which can be continuously deformed into $\Gamma^j$ according to 
our homotopy class: we could
appropriately call it a solution of 
the {\em $(8j)^{-1}$ homotopic Plateau problem}. As
a matter of fact, we do not know any regularity 
for this problem. However, if we consider
a corresponding minimizing sequence $\partial \{\Omega^{j,k}\}_k$,
we will show that it converges, up to subsequences, to a varifold
$V^j$ which is regular in $An$. This
regularity is triggered by the following observation:
on any sufficiently small ball $B\subset An$, $V^j\res B$ 
is the boundary
of a Caccioppoli set $\Omega^j$ which solves the Plateau problem
in the class $\mathcal{P} (\Omega^j, B)$ 
(in the sense of Theorem \ref{t:DeGiorgi}).

In fact, by standard blowup methods of geometric measure theory, 
$V^j$ is close to a cone in any sufficiently small ball $B= B_r (y)$. 
For $k$ large, the same property holds for $\partial \Omega^{j,k}$. 
Modifying suitably an idea of \cite{Sm}, this property can be used
to show that any (sufficiently regular)
competitor 
$\tilde{\Omega}\in \mathcal{P} (\Omega^{j,k}, B)$ can
be homotopized to $\Omega^{j,k}$ without passing through a surface
of large energy. In other words, minimizing
sequences of the homotopic Plateau problem are in
fact minimizing for the usual Plateau problem
at sufficiently small scales.

Having shown the regularity of $V^j$ in $An$, we use
the Schoen--Simon compactness theorem to show that $V^j$
converges to a varifold $\tilde{V}$ which in $An$ is 
a stable minimal hypersurface. A suitable
diagonal sequence $\Gamma^{j, k(j)}$ gives the
surfaces $\tilde{\Gamma}^j$. 

\subsection{Regularity of $V$}\label{ss:reg} One would
like to conclude that, if $V'$ is a replacement for $V$ 
in an annulus contained in a convex ball,
then $V=V'$ (and hence $V$ is
regular in $An$). However, two stationary varifolds might coincide
outside of a convex set and be different inside:
the standard unique continuation property of classical
minimal surfaces fails in the general case of
stationary varifolds
(see the appendix of \cite{CD} for an example). We need more
information to conclude the regularity of $V$. 
Clearly, applying Proposition
\ref{p:replacement} three times we conclude

\begin{propos}\label{p:three}
Let $V$ and $r$ be as in Proposition \ref{p:almost1}.
Fix $x\in M$ and $An\in \An_{r(x)} (x)$. Then:
\begin{itemize}
\item[(a)] $V$ has a replacement $V'$ in $An$ such that
\item[(b)] $V'$ has a replacement $V''$
in any $An'\in \An_{r (x)} (x)\cup\bigcup_{y\neq x}
\An_{r'(y)} (y)$ such that
\item[(c)] $V''$ has a replacement $V'''$ in any $An''
\in \An_{r''(y)} (y)$ with $y\in M$.
\end{itemize}
$r'$ and $r''$ are positive functions (which
might depend on $V'$ and $V''$).
 \end{propos}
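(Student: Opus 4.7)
The proof is a straightforward iteration of Proposition \ref{p:replacement}, applied once for each of the assertions (a)--(c). A preliminary observation makes the iteration legitimate: although Proposition \ref{p:replacement} is stated with input ``as in Proposition \ref{p:almost1}'', its proof only uses conclusions (a)--(b) of that proposition, i.e.\ the almost-minimality of the sequence in the prescribed annuli and the stationarity of its varifold limit, rather than the min-max origin of the sequence. Consequently any triple $(\tilde V,\{\tilde\Gamma^j\},\tilde r)$ enjoying these two structural properties can again be fed into Proposition \ref{p:replacement}.

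For (a) we apply Proposition \ref{p:replacement} directly to the original triple $(V,\{\Gamma^j\},r)$ in the chosen annulus $An\in\An_{r(x)}(x)$. This yields the desired replacement $V'$ of $V$ in $An$, together with a sequence $\{\tilde\Gamma^j\}\to V'$ and a positive function $r'$ on $M$ such that $\{\tilde\Gamma^j\}$ is a.m.\ in every $An'\in\An_{r'(y)}(y)$, $y\in M$, and such that $r'(x)=r(x)$. Since $V'$ is by definition stationary, the triple $(V',\{\tilde\Gamma^j\},r')$ inherits the two structural properties flagged above.

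For (b) we use the identity $r'(x)=r(x)$, which gives $\An_{r(x)}(x)=\An_{r'(x)}(x)$, so the union $\An_{r(x)}(x)\cup\bigcup_{y\neq x}\An_{r'(y)}(y)$ coincides with $\bigcup_{y\in M}\An_{r'(y)}(y)$. Given any $An'$ in this union, we select the corresponding $y$ and apply Proposition \ref{p:replacement} to $(V',\{\tilde\Gamma^j\},r')$ with $y$ in the role of $x$. The output is a replacement $V''$ of $V'$ in $An'$, a sequence $\{\tilde{\tilde\Gamma}^j\}\to V''$, and a positive function $r''$ on $M$ for which $\{\tilde{\tilde\Gamma}^j\}$ is a.m.\ in every $An''\in\An_{r''(z)}(z)$, $z\in M$. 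For (c), one more application of Proposition \ref{p:replacement} to the triple $(V'',\{\tilde{\tilde\Gamma}^j\},r'')$ in any prescribed annulus $An''\in\An_{r''(y)}(y)$ produces the final replacement $V'''$.

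There is no substantive obstacle: the only point to acknowledge is the preliminary observation that Proposition \ref{p:replacement} can be iterated even though the intermediate sequences $\{\tilde\Gamma^j\}$ and $\{\tilde{\tilde\Gamma}^j\}$ lose any a priori interpretation as min-max sequences for a family of sweepouts. This must be read off from the proof of Proposition \ref{p:replacement} itself, namely that only the a.m.\ property and the stationarity of the varifold limit are used. Once this is noted, the three-fold iteration, together with the matching identity $r'(x)=r(x)$ recorded in part (c) of Proposition \ref{p:replacement}, yields exactly the statement.
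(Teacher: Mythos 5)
Your proposal is correct and is essentially the paper's own argument: the paper disposes of Proposition \ref{p:three} with the single remark that it follows by ``applying Proposition \ref{p:replacement} three times'', which is exactly your three-fold iteration. Your explicit observation that only the a.m.\ property and the stationarity of the limit (rather than the min--max origin of the sequence) are used in the proof of Proposition \ref{p:replacement}, together with invoking $r'(x)=r(x)$ for part (b), is precisely the justification the paper leaves implicit.
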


In fact, the process could be iterated infinitely many times.
However, it turns out that three iterations are 
sufficient to prove regularity, as stated in
the following proposition. Its proof is given 
in Section \ref{s:reg}, where
we basically follow \cite{SS} (see also
\cite{CD}). 

\begin{propos}\label{p:reg}
Let $V$ be as in Proposition \ref{p:three}.
Then $V$ is induced by 
a minimal hypersurface $\Sigma$
(in the sense of Definition \ref{d:convention}).
 \end{propos}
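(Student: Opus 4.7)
The plan is to prove that, away from a small singular set of Hausdorff dimension at most $n-7$, $V$ coincides with a smooth stable embedded minimal hypersurface, by iterating the replacement construction of Proposition \ref{p:three} at every point $x \in \supp(\|V\|)$. Fix such an $x$ and pick $\rho$ small with respect to $r(x)$, $r'(x)$ and $r''(x)$. Choose a first annulus $An_1 \in \An_{r(x)}(x)$ inside $B_\rho(x)$; Proposition \ref{p:three}(a) produces a replacement $V'$ which is smooth stable minimal in $An_1$ and equals $V$ outside $\overline{An_1}$. Take a second annulus $An_2$, still centered at $x$, chosen so as to overlap $An_1$ but reach closer to $x$; Proposition \ref{p:three}(b) produces a second replacement $V''$, smooth stable minimal in $An_2$, agreeing with $V'$ outside $\overline{An_2}$. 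In the overlap $An_1 \cap An_2$, $V'$ and $V''$ are both smooth stable minimal hypersurfaces, and they coincide on the portion of $An_1$ lying outside $\overline{An_2}$ (nonempty by construction). Classical unique continuation for smooth minimal hypersurfaces then forces $V'=V''$ on the entire overlap, so that $V''$ is a smooth stable minimal hypersurface on the whole union $An_1 \cup An_2$. A third replacement, via Proposition \ref{p:three}(c) on a further annulus $An_3$ reaching even closer to $x$, extends this regularity by the same gluing argument and yields a smooth stable minimal hypersurface on the punctured ball $B_\rho(x)\setminus\{x\}$; since $V=V'''$ outside the three annuli, $V$ itself is a smooth stable embedded minimal hypersurface in $B_\rho(x)\setminus\{x\}$.

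To handle the center points, use the monotonicity formula \eqref{e:MonFor} to conclude that $\theta(x,V)$ exists; the tangent cones at $x$ are then stationary integer-rectifiable minimal cones which, by the previous paragraph applied to rescaled blow-ups, are regular away from the vertex. Whenever the density at $x$ is sufficiently close to an integer, Allard's regularity theorem promotes $V$ to a smooth hypersurface across $x$. The remaining singular points are controlled by Federer's dimension-reduction argument, which in the stable category relies precisely on the Schoen--Simon compactness and curvature estimates (Theorem \ref{t:SScomp}) to rule out nonflat stable minimal tangent cones of low dimension; this yields $\dim_{\cH}\Sing\,\Sigma \leq n-7$ for $\Sigma := \supp(\|V\|)$, as required by Definition \ref{d:convention}.

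The main obstacle is the identification $V'=V''$ on the overlap $An_1\cap An_2$. Stationary varifolds do not enjoy unique continuation in general (cf.\ the appendix of \cite{CD}), so the argument depends essentially on the fact that, inside both annuli, the replacements are already smooth stable minimal surfaces, reducing the gluing to classical unique continuation for smooth minimal hypersurfaces. Setting up the annuli $An_1$, $An_2$, $An_3$ so that every successive overlap lies in a region where smoothness is available from an earlier replacement, and verifying that the multiplicities and sheet structures match up via the graphical decomposition of Remark \ref{r:SSconvergence}, is the combinatorial heart of the proof and is precisely why three replacements, rather than one, are required.
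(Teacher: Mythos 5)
Your key step---``classical unique continuation then forces $V'=V''$ on the entire overlap''---is where the argument breaks down. The replacement $V''$ agrees with $V'$ only outside $\overline{An_2}$, so the open set on which the two varifolds are known to coincide is $An_1\setminus \overline{An_2}$, which is \emph{disjoint} from the overlap $An_1\cap An_2$: the two regions are separated by the sphere forming the outer boundary of $An_2$, and across that sphere no regularity of $V''$ is known a priori. Theorem \ref{t:unique} needs two smooth connected minimal hypersurfaces in one open set that coincide on an open subset of that set; to use it here you would first have to know that $\supp(\|V''\|)$ is a single smooth minimal surface across the interface sphere. Establishing exactly this is the hard part of the paper's proof: the outer radius $t$ of the second annulus is chosen so that $\partial B_t(x)$ meets $\Gamma'$ transversally, one then shows (via the maximum principle, Theorem \ref{t:max_prin}, the Constancy Theorem and Lemma \ref{l:tech2}) that every tangent cone to $V''$ at a point of $\gamma=\Gamma'\cap\partial B_t(x)$ is the full plane $T_w\Gamma'$, deduces that $\Gamma''$ attaches to $\Gamma'$ along $\gamma$ as a $C^1$ graph, and only then invokes elliptic regularity together with Theorem \ref{t:unique}. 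Moreover, even after the gluing, unique continuation on the overlap does not rule out connected components of $\Gamma''$ that never reach the gluing region; the paper eliminates these by a separate maximum-principle argument (a component trapped in $\overline{B}_{t-\eps}(x)$ would contradict Theorem \ref{t:max_prin}(ii) via Lemma \ref{l:tech2}).

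A second genuine gap is the final claim ``since $V=V'''$ outside the three annuli, $V$ itself is a smooth stable embedded minimal hypersurface in $B_\rho(x)\setminus\{x\}$.'' Inside each replacement annulus $V$ is \emph{not} known to coincide with its replacement---this is precisely the failure of unique continuation for stationary varifolds that motivates the whole section (Subsection \ref{ss:reg}), so regularity of $V'$, $V''$, $V'''$ says nothing directly about $V$ on the annuli, i.e.\ on most of the punctured ball. The paper transfers regularity back to $V$ by taking the second replacement in $An(x,s,t)$ with $t$ fixed and $s$ arbitrary, observing that $V''\res B_s(x)=V\res B_s(x)$, using the maximum principle to get $\supp(V\res B_s(x))\cap \partial B_s(x)\subset \Gamma$, and then applying Lemma \ref{l:tech1} to conclude $\supp(V)\subset\Gamma$; some such argument is indispensable and is absent from your proposal. (Similarly, your treatment of the center point for $n\le 6$ needs more than Allard ``when the density is close to an integer'': the paper constructs the sheet decomposition over dyadic annuli, applies Allard to each sheet, which has density one, and uses the classical maximum principle to show there is a single sheet; and the $n-7$ bound on the singular set comes from Theorem \ref{t:SScomp} applied to the replacement surfaces, not from a separate dimension-reduction argument for $V$.)
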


\section{The existence of almost mimimizing varifolds}\label{s:am}

In this section we prove Proposition \ref{p:almost1}. At various steps 
in the regularity theory we will have to construct comparison surfaces 
which are deformations of 
a given surface. However, each initial surface will be just a member
of a one--parameter family and in order to exploit our variational
properties we must in fact construct ``comparison families''.  
If we consider a family as a moving surface, if becomes clear that
difficulties come when we try to embed the deformation of a single
``time--slice'' into the dynamics of the family itself. The main
new point of this section is therefore
the following technical lemma, which allows
to use the ``static'' variational principle of Definition \ref{d:am}
to construct a ``dynamic'' competitor.

\begin{lemma}\label{l:freezing}
Let $U\subset\subset U'\subset M$ be two open sets and 
$\{\partial \Xi_t\}_{t\in [0,1]}$
a sweepout. Given an $\eps>0$ and a $t_0\in [0,1]$, assume
$\{\partial \Omega_s\}_{s\in [0,1]}$ is a one--parameter family
of surfaces satisfying \eqref{e:am1}, \eqref{e:am2}, \eqref{e:am3}
and \eqref{e:am4}, with $\Omega = \Xi_{t_0}$. Then there is $\eta>0$, such that the following holds for every $a,b,a',b'$ with
$t_0-\eta \leq b < b'<a'<a\leq t_0+\eta$.
There is a {\em competitor sweepout} $\{\partial \Xi'_t\}_{t\in [0,1]}$ with the following properties:
\begin{itemize}
\item[(a)] $\Xi_t = \Xi'_t$ for $t\in [0,a]\cup [b,1]$ and
$\Xi_t\setminus U'=\Xi'_t\setminus U'$ for $t\in (a,b)$;
\item[(b)] $\cH^n (\partial \Xi'_t)\leq 
\cH^n (\partial \Xi_t) + \frac{\eps}{4}$ for every $t$;
\item[(c)] $\cH^n (\partial \Xi'_t)\leq \cH^n (\partial \Xi_t)
-\frac{\eps}{2}$ for $t\in (a',b')$.
\end{itemize}
Moreover, $\{\partial\Xi'_t\}$ is homotopic to $\{\partial\Xi_t\}$.
\end{lemma}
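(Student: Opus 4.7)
The plan is to use the smooth convergence of $\Xi_t$ to $\Xi_{t_0}$ (Remark \ref{r:smooth_conv}) to transport the one--parameter deformation $\{\Omega_s\}$ onto each slice $\Xi_t$ for $|t-t_0|\leq \eta$, and then to activate the transported deformation by a smooth bump in $t$. Since $\partial \Xi_t$ may be written as a $C^k$--small normal graph over $\partial \Xi_{t_0}$ on every compactum of $M \setminus P_{t_0}$, I would build a smooth family of diffeomorphisms $\phi_t : M \to M$, $|t-t_0|\leq \eta$, with $\phi_{t_0} = \mathrm{id}$, $\phi_t \to \mathrm{id}$ smoothly, and $\phi_t(\Xi_{t_0}) = \Xi_t$ outside a small neighborhood of $P_{t_0}$. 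Concretely, $\phi_t$ is the time--$1$ flow of a vector field obtained by extending the normal graph function $g_t\,\nu$ to a tubular neighborhood of $\partial \Xi_{t_0}$. Shrinking $\eta$ further ensures $\phi_t(\overline{U}) \subset\subset U'$.

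Pick a smooth $\sigma: [0,1] \to [0,1]$ with $\supp \sigma \subset (a,b)$ (reading $a$ as the left and $b$ as the right endpoint of the change interval) and $\sigma \equiv 1$ on $[a', b']$, and define
\[
\Xi'_t := \begin{cases} \Xi_t, & t \notin [a,b],\\ \phi_t(\Omega_{\sigma(t)}), & t \in [a,b]. \end{cases}
\]
At $t=a$ and $t=b$ we have $\Omega_{\sigma(t)} = \Omega = \Xi_{t_0}$ and $\phi_t(\Xi_{t_0}) = \Xi_t$, giving continuity of the family. For $t \in (a,b)$, the hypothesis $\Omega_s \setminus U = \Xi_{t_0} \setminus U$ combined with $\phi_t(\overline{U}) \subset\subset U'$ gives $\phi_t(\Omega_{\sigma(t)}) \setminus U' = \phi_t(\Xi_{t_0}) \setminus U' = \Xi_t \setminus U'$, establishing (a).

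For (b) and (c), since $\phi_t \to \mathrm{id}$ smoothly I would use the Jacobian comparison
\[
\cH^n(\partial \Xi'_t) = \cH^n\bigl(\phi_t(\partial \Omega_{\sigma(t)})\bigr) \leq (1+\delta(\eta))\,\cH^n(\partial \Omega_{\sigma(t)}), \qquad \cH^n(\partial \Xi_t) \geq \cH^n(\partial \Omega) - \delta(\eta),
\]
with $\delta(\eta) \to 0$. Inserting \eqref{e:am3} and then \eqref{e:am4} yields $\cH^n(\partial \Xi'_t) - \cH^n(\partial \Xi_t) \leq \eps/4$ in general and $\leq -\eps/2$ on $[a',b']$, provided $\eta$ is small enough (depending only on $\eps$ and $\cH^n(\partial \Omega)$). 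The required homotopy between $\{\partial \Xi_t\}$ and $\{\partial \Xi'_t\}$ is
\[
H(r,t) := \begin{cases} \Xi_t, & t \notin [a,b], \\ \phi_t(\Omega_{r\sigma(t)}), & t \in [a,b], \end{cases}
\qquad (r,t)\in [0,1]^2,
\]
which is a generalized family by the smoothness of $\phi_t$ and $\sigma$ together with the fact that $\{\Omega_s\}$ is a generalized family.

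The main technical obstacle is the construction of $\phi_t$ across the finite singular set $P_{t_0}$: smooth convergence $\Xi_t \to \Xi_{t_0}$ holds only on compacta in $M \setminus P_{t_0}$, and the local topology of $\partial \Xi_t$ may jump across $P_{t_0}$. I would fix a small neighborhood $N$ of $P_{t_0}$ with $\cH^n(\partial \Xi_{t_0} \cap N)$ as small as desired, construct $\phi_t$ on $M \setminus N$ from the normal graph as above, and extend it arbitrarily (e.g.\ by identity after a cutoff) across $N$. The resulting area discrepancy inside $N$ is controlled by $\sup_{|t-t_0|\leq \eta}\cH^n(\partial \Xi_t \cap N)$, which by Hausdorff continuity and (s2) may be made much smaller than $\eps/16$ upon shrinking $N$ and then $\eta$, and is absorbed into the $\delta(\eta)$ error above.
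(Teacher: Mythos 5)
There is a genuine gap, and it sits exactly at the point your last paragraph tries to wave away. Your competitor is $\Xi'_t=\phi_t(\Omega_{\sigma(t)})$ with a \emph{global} diffeomorphism $\phi_t$ carrying $\Xi_{t_0}$ onto $\Xi_t$; but near the singular set $P_{t_0}$ no such map exists (the topology of $\partial\Xi_t$ may change there, which is the whole reason the $P_t$'s are allowed), so you cut $\phi_t$ off to the identity on a neighborhood $N$ of $P_{t_0}$. After that cutoff the set $\Xi'_t$ coincides, inside $N$, with $\Xi_{t_0}$ (up to the diffeomorphism) rather than with $\Xi_t$. This is not merely an area error that can be ``absorbed into $\delta(\eta)$'': it is a set-theoretic mismatch. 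Consequences: (i) at $t=a$ and $t=b$ the family jumps by a set of positive volume (from $\phi_a(\Xi_{t_0})$ to $\Xi_a$ inside $N$), so (s2), (s3) and (sw3) fail and $\{\partial\Xi'_t\}$ is not a sweepout, let alone homotopic to $\{\partial\Xi_t\}$ inside the class $\Lambda$; and (ii) if some point of $P_{t_0}$ lies outside $U'$, then for $t\in(a,b)$ one has $\Xi'_t\setminus U'\neq \Xi_t\setminus U'$ near that point, so conclusion (a) fails — and (a) is precisely what the Almgren--Pitts combinatorial argument needs, since there two competitors supported in sets $U'_i$, $U'_{i+1}$ with $d(U'_i,U'_{i+1})>0$ must be glued, which requires \emph{exact} agreement with $\Xi_t$ outside $U'$, not agreement up to a small-measure set. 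Shrinking $N$ and $\eta$ makes the discrepancy small but never zero, so the estimates in your Jacobian/area bookkeeping (which are otherwise in order, with enough slack between $\eps/8$, $\eps/4$ and $\eps/2$) cannot repair the construction.

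The paper avoids the global diffeomorphism altogether, and this is the essential idea your proposal is missing. One chooses $U\subset\subset A\subset\subset B\subset\subset U'$ so that the collar $C=B\setminus\overline{A}$ avoids the finitely many points of $P_{t_0}$ and $\partial\Xi_{t_0}\cap C$ is smooth; outside $B$ the slices are left \emph{identically} equal to $\Xi_t$ (so (a) is automatic), inside $A$ one first ``freezes'' the family by replacing $\Xi_t$ with $\Xi_{\gamma(t)}$ for a time reparametrization $\gamma$ that is constantly $t_0$ on an inner interval, and in the collar $C$ one interpolates between the graphs $g_t$ and $g_{\gamma(t)}$ with the cutoffs $\varphi_A,\varphi_B$, where the graph representation is valid precisely because $P_{t_0}\cap C=\emptyset$. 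Only then is the static deformation $\Omega_{\chi(t)}$ inserted inside $A$ during the frozen interval. If you want to salvage your transport idea, the cutoff must be performed so that the modified set transitions back to $\Xi_t$ itself across a smooth collar disjoint from $P_{t_0}$ — which is, in effect, the paper's construction.
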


Bulding on Lemma \ref{l:freezing}, Proposition
\ref{p:almost1} can be proved using a clever combinatorial
argument due to Pitts and Almgren. Indeed, for this
part our proof follows literally the exposition of
Section 5 of \cite{CD}. This section is therefore
split into two parts. In the first one we use the Almgren--Pitts
combinatorial argument to show Proposition \ref{p:almost1}
from Lemma \ref{l:freezing}, which will be proved in
the second.

\subsection{Almost minimizing varifolds}
Before coming to the proof, we introduce some further notation.

\begin{definition}
 Given a pair of open sets $(U^1,U^2)$ we call a hypersurface $\partial
\Omega$ $\eps$-a.m. in $(U^1,U^2)$ if it is $\eps$-a.m. in at least one 
of the two open sets. We denote by $\cC\cO$ the set of pairs 
$(U^1,U^2)$ of open sets with
$$\d(U^1,U^2)\geq 4\min\{\diam(U^1),\diam(U^2)\}.$$
\end{definition}

The following trivial lemma will be of great importance.

\begin{lemma}\label{l:distlem}
 If $(U^1,U^2)$ and $(V^1,V^2)$ are such that
$$
\d(U^1,U^2)\geq 2\min\{\diam(U^1),\diam(U^2)\}\qquad
\d(V^1,V^2)\geq 2\min\{\diam(V^1),\diam(V^2)\}\, ,
$$ 
then there are indices $i,j\in \{1,2\}$ with $\d(U^i,V^j)>0$.
\end{lemma}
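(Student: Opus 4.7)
The plan is to argue by contradiction: assume $\d(U^i, V^j) = 0$ for all four combinations $i,j \in \{1,2\}$, and derive that both $\min\{\diam U^1, \diam U^2\}$ and $\min\{\diam V^1, \diam V^2\}$ must vanish, which is incompatible with the hypotheses (using the convention $\d(\emptyset, \cdot) = +\infty$ to absorb the degenerate corner case). Conceptually, the route is to use the triangle inequality to bound the separation of the $V^j$'s by the diameter of a single $U^i$, and vice-versa.

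Concretely, fix $\epsilon > 0$. Since $\d(U^1, V^1) = 0$ and $\d(U^1, V^2) = 0$, choose (possibly distinct) points $u, u' \in U^1$ together with $v_1 \in V^1$ and $v_2 \in V^2$ satisfying $d(u, v_1) < \epsilon$ and $d(u', v_2) < \epsilon$. The triangle inequality gives
$$ d(v_1, v_2) \;\leq\; d(v_1, u) + d(u, u') + d(u', v_2) \;<\; 2\epsilon + \diam(U^1). $$
Running the identical argument through $U^2$ and taking an infimum on the left shows
$$ \d(V^1, V^2) \;\leq\; 2\epsilon + \min\{\diam(U^1), \diam(U^2)\}, $$
and by symmetry (routing through $V^1$ and $V^2$)
$$ \d(U^1, U^2) \;\leq\; 2\epsilon + \min\{\diam(V^1), \diam(V^2)\}. $$

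Writing $\alpha := \min\{\diam U^1, \diam U^2\}$ and $\beta := \min\{\diam V^1, \diam V^2\}$ and combining with the two standing hypotheses, one obtains $2\beta \leq 2\epsilon + \alpha$ and $2\alpha \leq 2\epsilon + \beta$. Adding these two and letting $\epsilon \downarrow 0$ forces $\alpha + \beta \leq 0$, hence $\alpha = \beta = 0$. This means at least one of $U^1, U^2$ and one of $V^1, V^2$ is either empty or a single point; in either situation one checks directly that some $\d(U^i, V^j) > 0$, contradicting the contradiction hypothesis.

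There is essentially no obstacle here — the lemma is really a bookkeeping exercise with the triangle inequality, and the authors themselves label it trivial. The only small point requiring care is to ensure that the two independent almost-realizers entering the triangle inequality can be chosen freely (which they can, since each hypothesis $\d(U^i, V^j) = 0$ is a separate infimum condition), so that the strict $\epsilon$-terms combine cleanly in the limit.
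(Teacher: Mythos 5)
Your main argument is fine, and there is nothing in the paper to compare it against: the authors state this lemma without proof, calling it trivial. The triangle-inequality step is correct: under the contradiction hypothesis all four sets are nonempty (otherwise some $d(U^i,V^j)=+\infty$), and routing through $U^1$, $U^2$, $V^1$, $V^2$ does give $2\beta\leq 2\eps+\alpha$ and $2\alpha\leq 2\eps+\beta$, hence $\alpha=\beta=0$.

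The one flawed step is the endgame: the claim that if some $U^i$ and some $V^j$ is ``empty or a single point'' then ``one checks directly that some $d(U^i,V^j)>0$'' is false in the single-point branch. Take $U^1=V^1=\{p\}$ and $U^2=V^2=B_1(p)\setminus\{p\}$: both hypotheses of the lemma hold (the minima of the diameters are $0$), yet all four distances vanish, so the lemma itself would fail if singletons were admissible. What saves you is not a direct check but the standing assumption, inherited from the preceding definition of $\cC\cO$, that the $U^i$, $V^j$ are \emph{open} subsets of the $(n+1)$-dimensional manifold $M$: a nonempty open set has positive diameter, so under your contradiction hypothesis $\alpha,\beta>0$, and $\alpha+\beta\leq 0$ is already absurd with no case analysis at all (the empty case is likewise excluded, since emptiness would force some distance to be $+\infty$ rather than $0$). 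With that one-line repair replacing the final paragraph, the proof is complete.
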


We are now ready to state the Almgren--Pitts combinatorial
Lemma: Proposition \ref{p:almost1} is indeed a corollary
of it. 

\begin{propos}[Almgren--Pitts combinatorial Lemma]\label{p:combinatorial}
Let $\Lambda$ be a homotopically closed
 family of sweepouts. There is a min-max 
sequence $\{\Gamma^N\}=\{\partial \Omega^{k(N)}_{t_{k(N)}}\}$ 
such that
\begin{itemize}
\item $\Gamma^N$ converges to a stationary varifold;
\item For any $(U^1, U^2)\in\cC\cO$, $\Gamma^N$ is $1/N$-a.m. 
in $(U^1,U^2)$, for $N$ large enough. 
\end{itemize}
\end{propos}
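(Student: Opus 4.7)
The plan is to argue by contradiction, starting from the minimizing sequence $\{\{\Gamma_t\}^k\}$ produced by Proposition \ref{p:stationary}. By construction every min--max subsequence extracted from that family already converges to a stationary varifold, so the first bullet of the conclusion is automatic and the only real task is to enforce the second. Call a time $t\in[0,1]$ \emph{$N$-good for $k$} if $\partial\Omega^k_t$ is $(1/N)$-a.m.\ in $(U^1,U^2)$ for every $(U^1,U^2)\in\cC\cO$, and \emph{$N$-bad} otherwise. If for each $N$ we can find $k=k(N)\to\infty$ and an $N$-good $t_{k(N)}$ with $\cH^n(\partial\Omega^{k(N)}_{t_{k(N)}})\to m_0(\Lambda)$, the resulting diagonal sequence $\Gamma^N$ satisfies both bullets. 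Suppose instead that this fails; then there is a fixed $N_0$ such that for every sufficiently large $k$, every time $t$ with $\cH^n(\partial\Omega^k_t)>m_0(\Lambda)-1/(8N_0)$ is $N_0$-bad, i.e.\ admits a pair $(U^1_t,U^2_t)\in\cC\cO$ and one-parameter families supported in $U^1_t$ and in $U^2_t$ of the kind forbidden by Definition \ref{d:am}.

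The next step is to lift these pointwise competitors to a homotopic deformation of the entire sweepout $\{\partial\Omega^k_s\}_{s\in[0,1]}$. This is precisely the role of Lemma \ref{l:freezing}: at any bad $t_0$, with $\eps=1/N_0$ and with either of the two open sets $U^j_{t_0}$, it produces a competitor sweepout $\{\partial\Xi'_s\}$ homotopic to $\{\partial\Omega^k_s\}$ that agrees with it outside a small time-window $(b,a)\ni t_0$, whose area is globally bounded by the original area plus $1/(4N_0)$, and whose area drops by at least $1/(2N_0)$ on a narrower inner window $(a',b')\ni t_0$. If such freezing deformations could be composed simultaneously over a finite cover of the near-maximal set using open sets that are pairwise disjoint in space, then, by shrinking $\eta$ so that the transition zones $(b,a)\setminus (a',b')$ sit on time-intervals where the original area is already strictly below the maximum, the resulting sweepout would have maximum area strictly smaller than $m_0(\Lambda)$, contradicting its definition.

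The spatial-disjointness requirement is exactly the content of Lemma \ref{l:distlem}. By property (s2) of Definition \ref{d:sweep} the set of near-maximal times is open, hence covered by finitely many time-windows $(b_j,a_j)\ni t_j$ produced by the freezing lemma. Each $t_j$ comes with two candidate open sets $U^1_{t_j},U^2_{t_j}$ forming a pair in $\cC\cO$, and Lemma \ref{l:distlem} guarantees that whenever two such windows overlap in time the indices $i_j,i_{j'}\in\{1,2\}$ can be chosen so that $U^{i_j}_{t_j}$ and $U^{i_{j'}}_{t_{j'}}$ are spatially disjoint; propagating this pairwise choice over the ``overlap graph'' of the cover is the pigeonhole/colouring argument of Almgren--Pitts (see \cite{Alm} and \cite{P}). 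Since freezing deformations supported in spatially disjoint open sets commute, the chosen deformations can then be composed to produce the desired competitor. This combinatorial step is the same one carried out in Section 5 of \cite{CD}, and once Lemma \ref{l:freezing} is available it can be quoted verbatim.

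The main obstacle is therefore neither the existence of pointwise competitors (built into the failure of almost-minimality) nor the construction of a global one-parameter competitor in a single time-window (the job of Lemma \ref{l:freezing}), but the final combinatorial colouring: one must control both the multiplicity of the time cover and the chromatic number of the overlap graph using only the quantitative separation $d(U^1,U^2)\geq 4\min\{\diam U^1,\diam U^2\}$ encoded in $\cC\cO$. Since this is already worked out in Section 5 of \cite{CD}, the only new ingredient required here is Lemma \ref{l:freezing}, which replaces the discretized, isotopy-based almost-minimizing condition of Pitts with the smooth, surface-based one of Definition \ref{d:am} and allows the Almgren--Pitts combinatorics to run directly on continuous sweepouts.
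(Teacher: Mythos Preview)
Your outline follows the paper's strategy exactly: start from the minimizing sequence of Proposition~\ref{p:stationary}, assume by contradiction that every near-maximal slice is bad, invoke Lemma~\ref{l:freezing} at each bad time, and assemble the local competitors into a single homotopic sweepout using the covering argument, with Lemma~\ref{l:distlem} providing spatial disjointness on overlapping time-windows.

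One step of the sketch is wrong, however. You propose to ``shrink $\eta$ so that the transition zones $(b,a)\setminus(a',b')$ sit on time-intervals where the original area is already strictly below the maximum''. This cannot be arranged: the bad set $K=\{t:\cH^n(\partial\Omega_t)\geq m_0-1/N\}$ is closed and the time-windows must cover it, so transition zones will in general meet $K$. The mechanism that actually closes the argument is a quantitative balance, and the paper carries it out explicitly rather than deferring to \cite{CD}. After refining, the cover $\{J_l\}$ has multiplicity at most two, and one chooses $\delta>0$ so that every $t\in K$ lies in some \emph{inner} interval $(a_i+\delta,b_i-\delta)$; there the area drops by at least $\tfrac{1}{2N}$, while a second overlapping $J_l$ can raise it by at most $\tfrac{1}{4N}$, for a net drop of at least $\tfrac{1}{4N}$ on $K$. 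Outside $K$ the area was already below $m_0-\tfrac{1}{N}$ and can rise by at most $2\cdot\tfrac{1}{4N}$. Your final paragraph shows you recognise that controlling the multiplicity of the cover is the crux, but the earlier transition-zone heuristic misidentifies why the construction succeeds; since this accounting \emph{is} the combinatorial step, it is worth writing it out rather than citing it away.
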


\begin{proof}[Proof of Proposition \ref{p:almost1}]
 We show that a subsequence of the $\{\Gamma^k\}$ in Proposition 
\ref{p:combinatorial} satisfies the requirements of 
Proposition \ref{p:almost1}. 
For this fix $k\in \NN$ and $r>0$ such that $\Inj(M)>9r>0$. Then, $(B_r(x),M\setminus \overline{B}_{9r} 
(x))\in\cC\cO$ for all $x\in M$. 
Therefore we have that $\Gamma^k$ is (for $k$ large enough)
$1/k$-almost minimizing in $B_r(x)$ or $M\setminus 
\overline{B}_{9r} (x)$. Therefore,
having fixed $r>0$, 
\begin{itemize}
 \item [(a)] either $\{\Gamma^k\}$ is 
(for $k$ large) $1/k$-a.m. in $B_r(y)$ for every $y\in M$;
 \item [(b)] or there are a (not relabeled) subsequence
$\{\Gamma^k\}$ and a sequence $\{x^k_r\}\subset M$ such that 
$\Gamma^k$ is $1/k$-a.m. in $M\setminus \overline{B}_{9r} (x^k_r)$.
\end{itemize}
If for some $r>0$ (a) holds, we clearly have a sequence
as in Proposition \ref{p:almost1}.
Otherwise there are a subsequence of $\{\Gamma^k\}$,
not relabeled, and a collection of points $\{x^k_j\}_{k,j\in \N}\subset M$
such that 
\begin{itemize}
 \item for any fixed $j$, $\Gamma^k$ is $1/k$-a.m. in $M\setminus \overline{B}_{1/j} (x^k_j)$ for $k$ large enough;
 \item $x^k_j\to x_j$ for $k\to \infty$ and $x_j\to x$ for $j\to \infty$.
\end{itemize}
We conclude that, for any $J$, there is $K_J$ such that
$\Gamma^k$ is $1/k$--a.m. in $M\setminus \overline{B}_{1/J} (x)$
for all $k\geq K_J$.
Therefore, if $y\in M\setminus \{x\}$, we choose $r (y)$
such that $B_{r(y)}\subset\subset M\setminus \{x\}$, whereas
$r(x)$ is chosen arbitrarily. It follows that $An\subset\subset M\setminus\{x\}$, for
any $An\in \An_{r(z)} (z)$ with $z\in M$.
Hence, $\{\Gamma^k\}$ is $1/k$-a.m. 
in $An$, provided $k$ is large enough, which completes the proof
of the Proposition.
\end{proof}

\begin{proof}[Proof of Proposition \ref{p:combinatorial}]
We start by picking a minimizing sequence $\{\{\Gamma_t\}^k\}$ 
satisfying the requirements of Proposition \ref{p:stationary} and such 
that $\cF(\{\Gamma_t\}^k)<m_0+\frac{1}{8k}$. We then
assert the following claim, which clearly implies the Proposition.
\begin{Cla}
For $N$ large enough, there exists $t_N\in [0,1]$ such that 
$\Gamma^N:=\Gamma^N_{t_N}$ is $\frac{1}{N}$-a.m. in all 
$(U^1,U^2)\in \cC\cO$ and $\cH^n(\Gamma^N)\geq m_0-\frac{1}{N}$.
\end{Cla}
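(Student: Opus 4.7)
The plan is to argue by contradiction. Suppose the Claim fails, so that, passing to a (not relabeled) subsequence, for every $N$ and every $t$ in the closed set $K_N := \{t\in[0,1]:\cH^n(\Gamma^N_t)\geq m_0-1/N\}$ there is a pair $(U^1_t,U^2_t)\in\cC\cO$ in which $\Gamma^N_t$ is not $(1/N)$-a.m. Unpacking Definition \ref{d:am} in each of $U^1_t$ and $U^2_t$ produces, for $i=1,2$, a family $\{\partial\Omega^{i,t}_s\}_{s\in[0,1]}$ supported in $U^i_t$ fulfilling \eqref{e:am1}--\eqref{e:am4} with $\eps=1/N$.

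Next I would apply Lemma \ref{l:freezing} at each such $t$ and each $i\in\{1,2\}$, with $\eps=1/N$, $U=U^i_t$, and $U^i_t\subset\subset U'_{t,i}$ chosen still disjoint from the twin component (possible because $(U^1_t,U^2_t)\in\cC\cO$). This yields a parameter $\eta_{t,i}>0$; set $\eta_t:=\min\{\eta_{t,1},\eta_{t,2}\}$. By compactness of $K_N$ I extract a finite cover $\{I_j\}_{j=1}^L$ of $K_N$ by open intervals $I_j\subset(t_j-\eta_{t_j},t_j+\eta_{t_j})$, each carrying its pair $(U^1_{t_j},U^2_{t_j})\in\cC\cO$.

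The combinatorial heart of the argument—and the step I expect to be the main obstacle—is to assign to each $j$ a single component $W_j\in\{U^1_{t_j},U^2_{t_j}\}$ so that $W_j\cap W_{j'}=\emptyset$ whenever $I_j\cap I_{j'}\neq\emptyset$. This is where Lemma \ref{l:distlem} enters: for any two pairs in $\cC\cO$ there is a choice of one component from each that are separated, and this bootstraps along chains of pairwise overlapping intervals after refining the cover so that no point of $[0,1]$ is contained in more than two of the $I_j$. Following the Almgren--Pitts scheme reported in Section~5 of \cite{CD}, this yields the desired selection $j\mapsto W_j$.

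With the $W_j$'s in hand, apply the competitor construction of Lemma \ref{l:freezing} independently on each $I_j$ inside $W_j$. Because overlapping $I_j$'s carry disjoint $W_j$'s, the local modifications compose into a single global family $\{\partial\Xi'_t\}_{t\in[0,1]}$ homotopic to $\{\Gamma_t\}^N$. Arranging within each $I_j$ that the inner subintervals $(a'_j,b'_j)$ from Lemma \ref{l:freezing} jointly cover $K_N$, property~(c) gives $\cH^n(\partial\Xi'_t)\leq \cH^n(\Gamma^N_t)-\tfrac{1}{2N}+\tfrac{1}{4N}\leq m_0+\tfrac{1}{8N}-\tfrac{1}{4N}$ on $K_N$, while off $K_N$ property~(b) together with $\cH^n(\Gamma^N_t)<m_0-1/N$ yields $\cH^n(\partial\Xi'_t)<m_0-\tfrac{1}{N}+\tfrac{1}{4N}$. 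For $N$ large this forces $\cF(\{\partial\Xi'_t\})<m_0$; since $\Lambda$ is homotopically closed, $\{\partial\Xi'_t\}\in\Lambda$, contradicting $m_0=\inf_\Lambda\cF$.
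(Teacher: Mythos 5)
Your overall strategy is the paper's own: negate the Claim on $K_N$, extract deformation families in both components of a $\cC\cO$ pair, localize in time via Lemma \ref{l:freezing}, select components so that overlapping intervals act in separated regions, glue, and push the maximal area below $m_0$. But there is a genuine gap at the gluing step. The competitor produced by Lemma \ref{l:freezing} differs from the original sweepout on the \emph{larger} set $U'$, not just on $U$ (property (a) of that lemma), so what must be pairwise separated for overlapping time intervals is the selected \emph{enlarged} sets, not the original components. In your scheme the enlargements $U'_{t,i}$ are fixed at the moment you invoke Lemma \ref{l:freezing} (they must be, since they determine $\eta_{t,i}$), and you only require them to avoid the twin component of the same pair; the later selection $j\mapsto W_j$ with $W_j\cap W_{j'}=\emptyset$, obtained from Lemma \ref{l:distlem} applied to the original pairs, says nothing about the supports $U'_{t_j,i}$ of the modifications, which may well overlap. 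So ``because overlapping $I_j$'s carry disjoint $W_j$'s, the local modifications compose'' does not follow as written. The fix is precisely why the definition of $\cC\cO$ carries the factor $4$ while Lemma \ref{l:distlem} only needs the factor $2$: choose $U'_{i,t}\supset\supset U_{i,t}$ so small that the enlarged pair still satisfies $d(U'_{1,t},U'_{2,t})\geq 2\min\{\diam (U'_{1,t}),\diam (U'_{2,t})\}$, and then run Lemma \ref{l:distlem} and the refinement on the \emph{enlarged} sets, so that $\overline{J}_i\cap\overline{J}_j\neq\emptyset$ forces $d(U'_i,U'_j)>0$; when an interval has to be split, its two halves carry the two enlarged components of the \emph{same} pair, which are separated by construction.

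Two smaller points. The purpose of the refinement is not to ensure that each point lies in at most two intervals (that is already built into the choice $t_i+\eta_i<t_{i+2}-\eta_{i+2}$ of the original cover), but to resolve the case in which the component choice forced by the left neighbour differs from the one forced by the right neighbour; your gloss conflates the two, though deferring to Section 5 of \cite{CD} is consistent with what the paper does. Finally, off $K_N$ a point can lie in two intervals, so the area gain is up to $2\cdot\frac{1}{4N}=\frac{1}{2N}$, not $\frac{1}{4N}$; this is harmless since $m_0-\frac{1}{N}+\frac{1}{2N}<m_0$, and on $K_N$ your bound $m_0+\frac{1}{8N}-\frac{1}{4N}=m_0-\frac{1}{8N}$ (which uses the normalization $\cF(\{\Gamma_t\}^N)\leq m_0+\frac{1}{8N}$ fixed before the Claim) agrees with the paper.
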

Define
$$
K_N\;:=\;\left\{t\in [0,1]:\cH^n(\Gamma^N_t)\geq m_0-\frac{1}{N}\right\}.
$$
Assume the claim is false. Then there is a sequence $\{N_k\}$
such that the assertion of the claim is violated
for every $t\in K_{N_k}$. By a slight abuse of notation, we
do not relabel the corresponding subsequence and from
now on we drop the super- and subscripts $N$.

Thus, for
every $t\in K$ we get a pair $(U_{1,t}, U_{2,t})\in \cC\cO$
and two families $\{\partial \Omega_{i,t, \tau}\}^{i\in\{1,2\}}_{\tau\in [0,1]}$
such that
\begin{itemize}
 \item[(i)] $\partial \Omega_{i,t,\tau}\cap (U_{i,t})^c= 
\partial \Omega_t\cap (U_{i,t})^c$
 \item[(ii)] $\partial \Omega_{i,t,0}=\partial \Omega_t$
 \item[(iii)] $\cH^n(\partial \Omega_{i,t,\tau})\leq 
\cH^n (\partial \Omega_t)+\frac{1}{8N}$
 \item[(iv)] $\cH^n(\partial \Omega_{i,t,1})\leq \cH^n (\partial
\Omega_t)-\frac{1}{N}$. 
\end{itemize}
For every $t\in K$ and every $i\in \{1,2\}$, we choose
$U'_{i,t}$ such that $U_{i,t}\subset\subset U'_{i,t}$ and
$$
\d (U'_{1,t}, U'_{2,t})\geq 2 \min \{\diam (U'_{1,t}),
\diam (U'_{2,t})\}
$$
Then we apply
Lemma \ref{l:freezing} with $\Xi_t = \Omega_t$, $U=U_{i,t}$,
$U'=U'_{i,t}$ and
$\Omega_\tau = \Omega_{i,t, \tau}$. 
Let $\eta_{i,t}$ be the corresponding constant $\eta$ given
by Lemma \ref{l:freezing} and let $\eta_t = \min\{\eta_{1,t},
\eta_{2,t}\}$. 

Next, cover $K$ with intervals $I_i = (t_i - \eta_i,
t_i + \eta_i)$ in such a way that: 
\begin{itemize}
\item $t_i + \eta_i< t_{i+2}-\eta_{i+2}$ for every $i$;
\item $t_i\in K$ and $\eta_i< \eta_{t_i}$.
\end{itemize}

\medskip

{\bf Step 1: Refinement of the covering.}
We are now going to refine the covering $I_i$ to
a covering $J_l$ such that: 
\begin{itemize}
\item $J_l\subset I_i$ for some $i(l)$;
\item there is a choice of a $U_l$ such that
$U'_l\in \{U'_{1,t_{i(l)}}, U'_{2, t_{i(l)}}\}$
and 
\begin{equation}\label{e:disjoint}
\d (U'_i, U'_j)>0 \qquad \mbox{if $\overline{J}_i\cap 
\overline{J}_j\neq\emptyset$;}
\end{equation}
\item each point $t\in [0,1]$ is contained in at most two
of the intervals $J_l$.
\end{itemize}
The choice of our refinement is in fact quite obvious.
We start by choosing $J_1=I_1$. Using Lemma \ref{l:distlem}
we choose indices $r,s$ such that $\dist (U'_{r, t_1}, U'_{s, t_2})>0$.
For simplicity we can assume $r=s=1$. We then
set $U'_1=U'_{1, t_1}$. Next, we consider two indices
$\rho, \sigma$ such that $\d (U'_{\rho, t_2}, U'_{\sigma, t_3})>0$.
If $\rho=1$, we then set $J_2=I_2$ and $U'_2 = U'_{1,t_2}$.
Otherwise, we cover $I_2$ with two open intervals $J_2$ and $J_3$,
with the property that $\overline{J}_2$ is disjoint from
$\overline{I}_3$ and $\overline{J}_3$ is disjoint from $\overline{I}_1$.
We then choose $U'_2= U'_{1, t_2}$ and $U'_3=U'_{2, t_2}$.
From this we are ready to proceed inductively. Note therefore
that, in our refinement of the covering, 
each interval $I_j$ with $j\geq 2$ get either ``split into
two halves'' or remains the same (cp. with Figure 1, left).   

Next, fixing the notation $(a_i, b_i)=J_i$, we choose $\delta>0$
with the property: 
\begin{itemize}
\item[(C)] Each $t\in K$ is contained in at least 
one segment $(a_i+\delta, b_i-\delta)$ (cp. with Figure 1, right).
\end{itemize}

\begin{figure}[htbp]
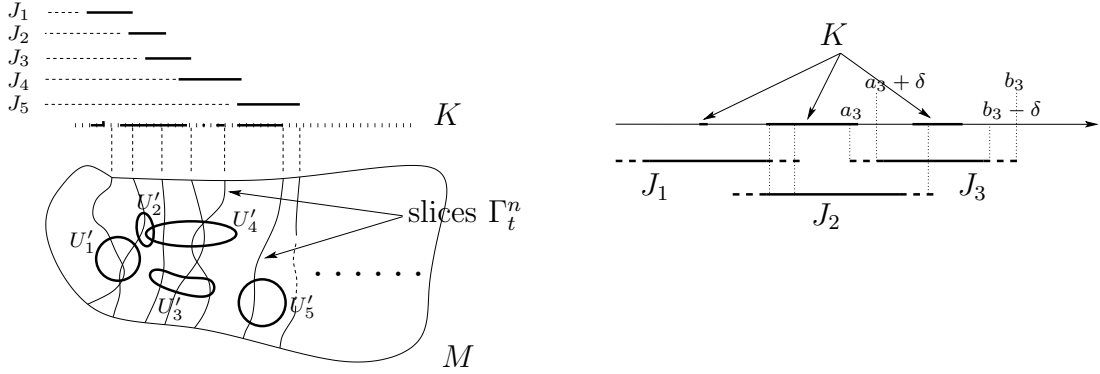

\begin{center}
\input{Pic1.pstex_t} 
\input{Pic1.1.pstex_t}
    \caption{The left picture shows the refinement of the
covering. We split $I_2$ into $J_2\cup J_3$ because
$U'_4=U'_{1,t_3}$ intersects $U'_2= U'_{1,t_2}$. The refined
covering has the property that $U'_i\cap U'_{i+1}=\emptyset$. 
In the right picture the segments $(a_k, b_k)=J_k$ and
$(a_k+\delta, b_k-\delta)$. Any point $\tau\in K$ belongs
to at least one $(a_i+\delta, b_i-\delta)$ and to
at most one $J_j\setminus (a_j+\delta, b_j-\delta)$.}
    \label{f:refinement}
\end{center}
\end{figure}

\medskip

{\bf Step 2: Conclusion.} We now apply Lemma \ref{l:freezing}
to conclude the existence of a family $\{\partial \Omega_{i,t}\}$
with the following properties:
\begin{itemize}
\item $\Omega_{i,t}=\Omega_t$ if $t\not\in (a_i, b_i)$
and $\Omega_{i,t}\setminus U'_i = \Omega_t\setminus U'_i$
if $t\in (a_i, b_i)$;
\item $\cH^n (\partial \Omega_{i,t}) \leq \cH^n (\partial\Omega_t)
+ \frac{1}{4N}$ for every $t$;
\item $\cH^n (\partial \Omega_{i,t})\leq \cH^n (\partial \Omega_t)
- \frac{1}{2N}$ if $t\in (a_i+\delta, b_i-\delta)$.
\end{itemize}
Note that, if $t\in (a_i, b_i)\cap (a_j, b_j)$,
then $j=i+1$ and in fact $t\not\in (a_k, b_k)$ for
$k\neq i, i+1$.
Moreover, $\dist (U'_i, U'_{i+1})>0$. Thus, we can define 
a new sweepout $\{\partial \Omega'_t\}_{t\in [0,1]}$
\begin{itemize}
\item $\Omega'_t=\Omega_t$ if $t\not\in \cup J_i$;
\item $\Omega'_t =\Omega_{i,t}$ if $t$ is contained
in a single $J_i$; 
\item $\Omega'_t = \left[\Omega_t\setminus (U'_i\cup U'_{i+1})\right]
\cup \left[\Omega_{i,t}\cap U'_i\right] \cup \left[\Omega_{i+1,t}\cap U'_{i+1}\right]$
if $t\in J_i\cap J_{i+1}$.
\end{itemize}
In fact, it is as well easy to check that 
$\{\partial \Omega'_t\}_{t\in [0,1]}$ is homotopic
to $\{\partial \Omega_t\}$ and hence belongs to $\Lambda$.

Next, we want to compute $\mathcal{F} (\{\partial \Omega'_t\})$.
If $t\not \in K$, then $t$ is contained in at most 
two $J_i$'s, and hence $\partial \Omega'_t$
can loose at most $2\cdot \textstyle{\frac{1}{4N}}$
in area:
\begin{equation}\label{e:outside}
t\not\in K \quad \Rightarrow\quad  \cH^n (\partial \Omega'_t)
\;\leq\; \cH^n (\partial \Omega_t) + \frac{1}{2N}
\;\leq\; m_0 (\Lambda) - \frac{1}{2N}\, .
\end{equation}
If $t\in K$, then $t$ is contained in at least
one segment $(a_i+\delta, b_i-\delta)\subset J_i$ and in at most
a second segment $J_l$. Thus, the area of $\partial \Omega'_t$
gains at least $\textstyle{\frac{1}{2N}}$
in $U'_i$ and looses at most $\textstyle{\frac{1}{4N}}$
in $U'_l$. Therefore we conclude
\begin{equation}\label{e:inside}
t\in K \quad \Rightarrow\quad  \cH^n (\partial \Omega'_t)
\;\leq\; \cH^n (\partial \Omega_t) - \frac{1}{4N}
\;\leq\; m_0 (\Lambda) - \frac{1}{8N}\, .
\end{equation}
Hence $\cF (\{\partial\Omega'_t\}) \leq m_0 (\Lambda)
- (8N)^{-1}$, which is a contradiction
to $m_0 (\Lambda) = \inf_\Lambda \cF$.
\end{proof}

\subsection{Proof of Lemma \ref{l:freezing}}\label{ss:freezing}
{\bf Step 1: Freezing.} First of all we choose open sets $A$ and $B$ such that
\begin{itemize}
\item $U\subset\subset A \subset\subset B \subset\subset U'$;
\item $\partial \Xi_{t_0}\cap C$ is a smooth surface,
where $C=B\setminus \overline{A}$.
\end{itemize}
This choice is clearly possible since there are only finitely many singularities of
$\partial \Xi_{t_0}$. Next, we
fix two smooth functions $\varphi_A$ and $\varphi_B$ such that
\begin{itemize}
\item $\varphi_A+\varphi_B=1$;
\item $\varphi_A\in C^\infty_c (B)$, $\varphi_B\in C^\infty_c (M\setminus
\overline{A})$.
\end{itemize}
Now, we fix normal coordinates $(z,\sigma)\in \partial \Xi_{t_0}\cap C\times (-\delta, \delta)$
in a regular $\delta$--neighborhood of $C\cap \partial \Xi_{t_0}$. 
Because of the convergence of $\Xi_t$ to $\Xi_{t_0}$,
we can fix $\eta>0$ and an open $C'\subset C$,
such that the following holds for every $t\in (t_0-\eta, t_0+\eta)$:
\begin{itemize}
\item $\partial \Xi_t\cap C$ is the graph of a function $g_t$
over $\partial \Xi_{t_0}\cap C$;
\item $\Xi_t\cap C\setminus C' = \Xi_{t_0}\cap C\setminus C'$;
\item $\Xi_t\cap C' = \{(z,\sigma): \,\sigma<g_t(z)\}\cap C'$,
\end{itemize}
(cp. with Figure 2). Obviously,
$g_{t_0}\equiv 0$. We next introduce the functions
\begin{equation}\label{e:interpolants}
g_{t,s, \tau}\;:=\; \varphi_B g_t + \varphi_A ((1-s) g_t + s g_{\tau})\, 
\qquad t, \tau\in (t_0-\eta, t_0+\eta), s\in [0,1].
\end{equation}
Since $g_t$ converges smoothly to $g_{t_0}$ as $t\to t_0$, by choosing
$\eta$ arbitrarily small, we can make $\sup_{s, \tau} \|g_{t,s, \tau}-g_t\|_{C^1}$
arbitrarily small. Next, if we express the area of the graph of a function
$g$ over $\partial \Xi_{t_0}\cap C$ as an integral functional of $g$,
this functional depends
obviously only on $g$ and its first derivatives.
Thus, if $\Gamma_{t,s, \tau}$ is the graph of $g_{t,s, \tau}$, then we can
choose $\eta$ so small that
\begin{equation}\label{e:estimate1}
\max_s \cH^n (\Gamma_{t,s, \tau}) \;\leq\; \cH (\partial \Xi_t \cap C) + \frac{\eps}{16}\, .
\end{equation}
Now, given $t_0-\eta<a<a'<b'<b<t_0+\eta$, we choose $a''\in (a, a')$ and
$b''\in (b',b)$ and fix:
\begin{itemize}
\item a smooth function $\psi: [a,b]\to [0,1]$
which is identically equal to $0$ in a neighborhood of $a$ and $b$
and equal to $1$ on $[a'', b'']$;
\item a smooth function $\gamma: [a,b]\to [t_0-\eta, t_0+\eta]$ which
is equal to the identity in a neighborhood of $a$ and $b$ and indentically
$t_0$ in $[a'', b'']$.
\end{itemize}
Next, define the family of open sets $\{\Delta_t\}$ as follows:
\begin{itemize}
\item $\Delta_t = \Xi_t$ for $t\not\in [a,b]$;
\item $\Delta_t\setminus \overline{B} = \Xi_t\setminus \overline{B}$
for all $t$;
\item $\Delta_t\cap A = \Xi_{\gamma (t)}\cap A$ for $t\in [a,b]$;
\item $\Delta_t\cap C\setminus C'= \Xi_{t_0}\cap C \setminus C'$ for $t\in [a,b]$;
\item $\Delta_t\cap C' = \{(z,\sigma): \sigma < g_{t, \psi (t), \gamma(t)} (z)\}$ for $t\in [a,b]$.
\end{itemize}
Note that $\{\partial \Delta_t\}$ is in fact a sweepout homotopic to
$\partial \Xi_t$. In addition: 
\begin{itemize}
\item $\Delta_t=\Xi_t$ if $t\not\in [a,b]$,
and $\Delta_t$ and $\Xi_t$ coincide outside of $B$ (and hence outside of $U'$) for {\em every} $t$;
\item $\Delta_t\cap A=\Xi_{\gamma (t)}\cap A$ for $t\in [a,b]$
(and hence $\Delta_t\cap U=\Xi_{\gamma (t)}\cap U$).
\end{itemize}
Therefore, $\Delta_t\cap U=\Xi_{t_0}\cap U$ for $t\in [a'',b'']$, i.e.
$\Delta_t\cap U$ is {\em frozen} in the interval $[a'',b'']$.
Moreover, because of \eqref{e:estimate1}, 
\begin{equation}\label{e:layer}
\cH^n (\partial \Delta_t\cap C) \leq \cH^n (\partial \Xi_t\cap C)+\frac{\eps}{16}
\qquad \mbox{for $t\in [a,b]$.}
\end{equation}

\medskip

{\bf Step 2: Dynamic competitor.}
Next, fix a smooth function $\chi: [a'', b'']\to [0,1]$ which is identically $0$ in
a neighborhood of $a''$ and $b''$ and which is identically $1$ on $[a',b']$. We
set
\begin{itemize}
\item $\Xi'_t = \Delta_t$ for $t\not\in [a'', b'']$;
\item $\Xi'_t\setminus A = \Delta_t\setminus A$ for $t\in [a'', b'']$;
\item $\Xi'_t\cap A = \Omega_{\chi (t)}\cap A$ for $t\in [a'',b'']$.
\end{itemize}
The new family $\{\partial \Xi'_t\}$ is also a sweepout, obviously homotopic
to $\{\partial \Delta_t\}$ and hence homotopic to $\{\partial \Xi_t\}$. We
next estimate $\cH^n (\partial \Xi'_t)$.
For $t\not\in [a,b]$, $\Xi'_t \equiv \Xi_t$ and hence
\begin{equation}\label{e:out}
\cH^n (\partial \Xi'_t) \;=\; \cH^n (\partial \Xi_t)
\qquad\mbox{for $t\not\in [a,b]$.}
\end{equation}
For $t\in [a,b]$, we anyhow have $\Xi'_t=\Xi_t$ on $M\setminus B$ and
$\Xi'_t= \Delta_t$ on $C$. This shows the property $(a)$ of the lemma. Moreover, for $t\in [a,b]$ we have
\begin{eqnarray}
\cH^n (\partial \Xi'_t)
- \cH^n (\partial \Xi_t) &\leq& [\cH^n (\partial \Delta_t\cap C)
- \cH^n (\partial \Xi_t\cap C)] + [\cH^n (\partial \Xi'_t \cap A)
 -\cH^n (\partial \Xi_t\cap A)]\, \nonumber\\
&\stackrel{\eqref{e:layer}}{\leq}& \frac{\eps}{16} + 
[\cH^n (\partial \Xi'_t \cap A) -\cH^n (\partial \Xi_t\cap A)]\label{e:break}.
\end{eqnarray}
To conclude, we have to estimate the part in $A$ in the time interval $[a,b]$. We have to consider several cases separately. 
\begin{itemize}
 \item[(i)] Let $t\in [a,a'']\cup [b'',b]$. Then $\Xi'_t\cap A = \Delta_t
\cap A = \Xi_{\gamma (t)}\cap A$. However, $\gamma (t), t\in (t_0-\eta,
t_0+\eta)$ and, having chosen $\eta$ sufficiently small, we can assume
\begin{equation}\label{e:oscil}
|\cH^n (\partial \Xi_s\cap A) - \cH^n (\partial \Xi_\sigma \cap A)|
\;\leq\; \frac{\eps}{16} \qquad \mbox{for every $\sigma,s\in (t_0-\eta, t_0+\eta)$}
\end{equation}
(note: this choice of $\eta$ is independent of $a$ and $b$!).
Thus, using \eqref{e:break}, we get
\begin{equation}\label{e:aa''}
\cH^n (\partial \Xi'_t)
\;\leq\; \cH^n (\partial \Xi_t) + \frac{\eps}{8}.
\end{equation}
\item[(ii)] Let $t\in [a'',a']\cup [b'',b']$. Then $\partial \Xi'_t\cap A
= \partial \Omega_{\chi (t)}\cap A$. Therefore we can write, using \eqref{e:break},
\begin{eqnarray}
\cH^n (\partial \Xi'_t)-\cH^n (\partial \Xi_t)
&\leq& \frac{\eps}{16}
+ [\cH^n (\partial \Xi_{t_0}\cap A) -\cH^n (\partial \Xi_t\cap A)]\nonumber\\
&&\quad\;\,+\, [\cH^n (\partial \Omega_{\chi (t)}\cap A) - \cH^n (\partial \Xi_{t_0}\cap A)]\nonumber\\
&\stackrel{\eqref{e:oscil},\eqref{e:am3}}{\leq}&
\frac{\eps}{16}+\frac{\eps}{16}+\frac{\eps}{8}
\;=\; \frac{\eps}{4}.
\label{e:a''a'}
\end{eqnarray}
\item[(iii)] Let $t\in [a',b']$. Then we have $\Xi'_t\cap A = \Omega_1\cap A$. Thus, again using \eqref{e:break},
\begin{eqnarray}
\cH^n (\partial \Xi'_t)-\cH^n (\partial \Xi_t)
&\leq& \frac{\eps}{16}
+ [\cH^n (\partial \Omega_1\cap A) -\cH^n (\partial \Xi_{t_0}\cap A)]\\
&&\quad\;\,+ \,[\cH^n (\partial \Xi_{t_0}\cap A) - \cH^n (\partial \Xi_t\cap A)]\nonumber\\
&\stackrel{\eqref{e:am4},\eqref{e:oscil}}{\leq}&
\frac{\eps}{16}-\eps+\frac{\eps}{16}
\;<\; -\frac{\eps}{2}.
\label{e:a'b'}
\end{eqnarray}
\end{itemize}
Gathering the estimates \eqref{e:out}, \eqref{e:aa''}, \eqref{e:a''a'} and \eqref{e:a'b'}, 
we finally obtain the properties $(b)$ and $(c)$ of the lemma. This finishes the proof.

\begin{figure}[htbp]
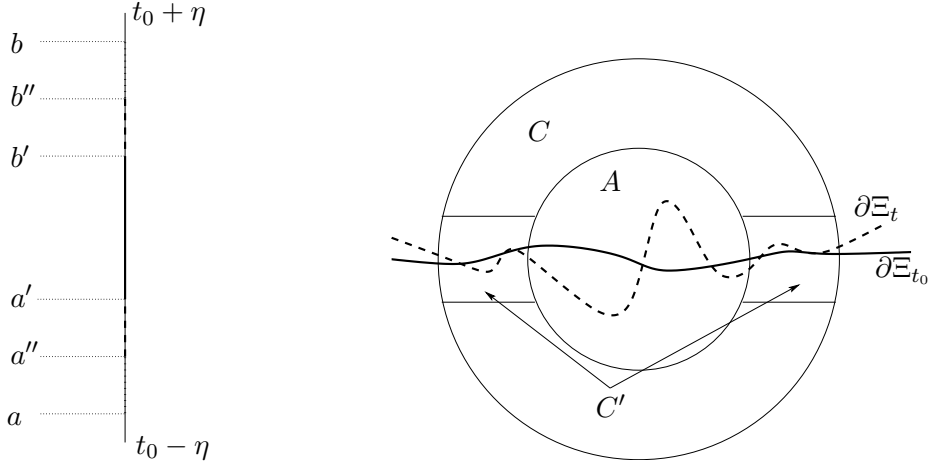

\begin{center}
\input{Pic2.pstex_t} 
\input{Pic2.1.pstex_t}
    \caption{The left picture shows the intervals
involved in the construction. If we focus on the
smaller set $A$, then: the sets $\Xi'_t$ coincide
with $\Delta_t$ and evolve from
$\Xi_a$ to $\Xi_{t_0}$ (resp. $\Xi_{t_0}$ to $\Xi_b$)
in $[a,a'']$ (resp. $[b'', b]$); they then evolve
from $\Xi_{t_0}$ to $\Omega_1$ (resp. $\Omega_1$
to $\Xi_{t_0}$) in $[a'',a']$ (resp. $[b', b'']$). 
On the right picture, the sets in the region $C$. 
Indeed, the evolution takes place in the region
$C'$ where we patch smoothly $\Xi_{t_0}$
with $\Xi_{\gamma (t)}$ into the sets $\Delta_t$.}
    \label{f:freezing}
\end{center}
\end{figure}

\section{The existence of replacements}
\label{s:rep}

In this section we fix $An\in \An_{r(x)} (x)$
and we prove the conclusion of Proposition \ref{p:replacement}.

\subsection{Setting}\label{ss:rep_setting} For every $j$, consider
the class $\mathcal{H} (\Omega^j, An)$
of sets $\Xi$ such that 
there is a family $\{\Omega_t\}$ satisfying
$\Omega_0=\Omega^j$,
$\Omega_1=\Xi$,
\eqref{e:am1}, \eqref{e:am2} and \eqref{e:am3} for $\eps=\frac{1}{j}$ and $U=An$.
Consider next a sequence $\Gamma^{j,k} = \partial \Omega^{j,k}$
which is minimizing for the perimeter in the class $\mathcal{H}
(\Omega^j, An)$: this is the minimizing sequence for 
the {\em $(8j)^{-1}$--homotopic Plateau problem}
mentioned in Subsection \ref{ss:rep}. 
Up to subsequences, we can assume that 
\begin{itemize}
\item $\Omega^{j,k}$ converges to a Caccioppoli set
$\tilde{\Omega}^j$;
\item $\Gamma^{j,k}$ converges to a varifold $V^j$;
\item $V^j$ (and a suitable diagonal sequence 
$\tilde{\Gamma}^j = \Gamma^{j, k(j)}$)
converges to a varifold $\tilde{V}$.
\end{itemize}
The proof of Proposition \ref{p:replacement} will
then be broken into three steps. In the first one we show

\begin{lemma}\label{l:concon} For every $j$ and
every $y\in An$ there is a ball $B = B_\rho (y)\subset An$ 
and a $k_0\in \NN$ with the following property. 
Every open set $\Xi$ such that
\begin{itemize}
\item $\partial \Xi$ is smooth except for a finite set,
\item $\Xi\setminus B = \Omega^{j,k}\setminus B$,
\item and $\cH^n (\partial \Xi) < \cH^n (\partial 
\Omega^{j,k})$,     
\end{itemize}
belongs to $\mathcal{H} (\Omega^j, An)$ if $k\geq k_0$.
\end{lemma}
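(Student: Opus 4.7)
\emph{Plan.} I plan to realize $\Xi$ as the endpoint of an admissible family in $\mathcal{H}(\Omega^j, An)$ by concatenating two homotopies: the one from $\Omega^j$ to $\Omega^{j,k}$, which exists by hypothesis because $\Omega^{j,k}\in\mathcal{H}(\Omega^j,An)$, with a short, $B$-supported homotopy from $\Omega^{j,k}$ to $\Xi$ built explicitly. The essential point is that, once $\rho$ is small and $k$ is large, this appended homotopy fits comfortably under the area ceiling $\cH^n(\partial\Omega^j)+\tfrac{1}{8j}$ imposed by \eqref{e:am3}.

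\emph{Quantitative room.} First I would observe that $\Omega^j\in\mathcal{H}(\Omega^j,An)$ via the constant family, so the perimeter infimum $m^j$ in that class satisfies $m^j\le\cH^n(\partial\Omega^j)$. Since $\{\Omega^{j,k}\}$ is perimeter-minimizing, for $k$ large
$$\cH^n(\partial\Omega^{j,k})\le\cH^n(\partial\Omega^j)+\frac{1}{16j},$$
leaving at least $\tfrac{1}{16j}$ of slack under the ceiling. I would then choose $\rho>0$ so that $\bar B_\rho(y)\subset An$, $\|V^j\|(\partial B_\rho(y))=0$ (a generic radius), and $\cH^n(\partial B_\rho(y))<\tfrac{1}{32j}$; the latter is arranged by shrinking $\rho$, as $\cH^n(\partial B_\rho(y))=O(\rho^n)$. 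Weak-$\ast$ convergence $\Gamma^{j,k}\to V^j$ together with $\|V^j\|(\partial B_\rho(y))=0$ then yields $k_0$ such that $\cH^n(\partial\Omega^{j,k}\cap B)$ is uniformly small for $k\ge k_0$.

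\emph{Cut-and-cap homotopy.} For $t\in[0,\tfrac12]$ set $\Theta_t:=\Omega^{j,k}\setminus B_{2t\rho}(y)$, so that $\Theta_0=\Omega^{j,k}$ and $\Theta_{1/2}=\Omega^{j,k}\setminus B$. For $t\in[\tfrac12,1]$ set $\Theta_t:=\Xi\setminus B_{(2-2t)\rho}(y)$, so that $\Theta_{1/2}=\Xi\setminus B=\Omega^{j,k}\setminus B$ (the two halves match) and $\Theta_1=\Xi$. The boundary of $\Theta_t$ consists of the outer portion of $\partial\Omega^{j,k}$ (resp. $\partial\Xi$) plus a spherical cap $\partial B_r(y)\cap\Omega^{j,k}$ (resp. $\partial B_r(y)\cap\Xi$). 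Since $\cH^n(\partial\Xi)<\cH^n(\partial\Omega^{j,k})$, I obtain uniformly in $t$
$$\cH^n(\partial\Theta_t)\le\cH^n(\partial\Omega^{j,k})+\cH^n(\partial B_\rho(y))\le\cH^n(\partial\Omega^{j,k})+\frac{1}{32j},$$
which, combined with the slack of $\tfrac{1}{16j}$, places the concatenated family strictly under $\cH^n(\partial\Omega^j)+\tfrac{1}{8j}$. After a standard smooth reparameterization of both families near their matching endpoint at $\tau=\tfrac12$, the concatenation is an admissible homotopy, so $\Xi\in\mathcal{H}(\Omega^j,An)$.

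\emph{Main obstacle.} The technical difficulty is verifying that $\{\Theta_t\}$ genuinely satisfies (s1)--(s3), (sw1), (sw3) of Definition \ref{d:sweep}. The naive cut introduces an $(n-1)$-dimensional corner along $\partial B_{r}(y)\cap\partial\Omega^{j,k}$ (resp. $\partial\Xi$), which violates the ``smooth outside a finite set'' clause of (s1). I expect to handle this by an arbitrarily small smoothing in a thin tubular neighborhood of the corner, picking up only a negligible perimeter error which is absorbed into the budget above, leaving only the finitely many tangency points between the shrinking sphere and $\partial\Omega^{j,k}$ (resp. $\partial\Xi$) as the exceptional set $P_t$; a generic choice of $\rho$ ensures that these tangencies stay isolated for almost every $t$.
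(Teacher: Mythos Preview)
Your overall strategy and your area bookkeeping are sound: concatenating the given homotopy $\Omega^j\rightsquigarrow\Omega^{j,k}$ with a short $B$-supported homotopy is exactly the right plan, and the observation that $\cH^n(\partial\Theta_t)\le\cH^n(\partial\Omega^{j,k})+\cH^n(\partial B_\rho(y))$ gives a much cheaper area estimate than the paper's. (Note, though, that the bound on $\cH^n(\partial\Omega^{j,k}\cap B)$ you extract from varifold convergence is never used in your argument; the only role of $k_0$ is to force $\cH^n(\partial\Omega^{j,k})\le\cH^n(\partial\Omega^j)+\tfrac{1}{16j}$.)

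The gap is exactly where you place it, but it is more serious than your last paragraph suggests. The corner $\gamma_t=\partial B_{r(t)}(y)\cap\partial\Omega^{j,k}$ is an $(n-1)$-dimensional set for every $t$, so the unsmoothed $\partial\Theta_t$ violates (s1) at every time, not at isolated times. To repair this you must produce a smoothing that varies smoothly in $t$ and satisfies the graphical convergence (s3); this forces you to control how $\gamma_t$ moves as $r(t)$ sweeps through $(0,\rho]$. The tangencies you hope to place in $P_t$ are the critical points of $d(\cdot,y)|_{\partial\Omega^{j,k}}$, and since $y$ is \emph{given} (you cannot perturb it), there is no reason this function is Morse --- its critical set need not be finite, and the topology of $\gamma_t$ can change in uncontrolled ways as $r(t)$ passes critical values. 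A ``generic choice of $\rho$'' does not help here: the critical radii lie in $(0,\rho)$ regardless of $\rho$. Carrying out this smoothing uniformly in $t$, across all critical radii, for an arbitrary $\Xi$, is the heart of the matter and is not done.

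The paper avoids this entirely by capping with a \emph{cone} rather than a sphere. It first stretches $\partial\Omega^{j,k}$ (and $\partial\Xi$) to agree with the cone $K$ over $\Omega^{j,k}\cap\partial B_r(y)$ in a neighbourhood of one fixed, transversal radius $r$; then it radially shrinks $\Omega^{j,k}\cap\bar B_r(y)$ toward $y$ while filling the vacated annulus with $K$. Because both the shrunk piece and the fill are conical near the moving seam, the gluing is $C^\infty$ for every $t$, and the only new singular point is the vertex $y$ at $t=\tfrac12$. The cost is a less trivial area bound (coarea plus the monotonicity formula for $V^j$), but the regularity (s1)--(s3) is automatic. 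Your spherical cap makes the opposite trade: the area bound is free, but the regularity has to be built by hand, and your outline does not build it.
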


In the second step we use Lemma \ref{l:concon} and
Theorem \ref{t:DeGiorgi} to show:

\begin{lemma}\label{l:min}
$\partial \tilde{\Omega}^j\cap An$ is a stable minimal hypersurface
in $An$ and $V^j \res An = \partial \tilde{\Omega}^j\res An$.
\end{lemma}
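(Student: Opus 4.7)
The plan is to combine Lemma \ref{l:concon} with Theorem \ref{t:DeGiorgi}: I will show that on every sufficiently small ball $B \subset An$ the set $\tilde{\Omega}^j$ solves the Plateau problem in the sense of De Giorgi, and then upgrade the $L^1$-convergence $\Omega^{j,k} \to \tilde{\Omega}^j$ to varifold convergence of the boundaries via mass continuity.

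First I would establish \emph{local perimeter minimality of $\tilde{\Omega}^j$}. Fix $y \in An$ and let $B = B_\rho(y) \subset An$ and $k_0$ be as in Lemma \ref{l:concon}. Given any Caccioppoli competitor $\Theta$ with $\Theta \setminus B = \tilde{\Omega}^j \setminus B$, for $\rho' < \rho$ define the hybrid set
\[
\Xi^{k}_{\rho'} := (\Theta \cap B_{\rho'}(y)) \cup (\Omega^{j,k} \setminus B_{\rho'}(y)).
\]
By Fubini and the $L^1$-convergence $\Omega^{j,k} \to \tilde{\Omega}^j$, for a.e.\ $\rho'$ close to $\rho$ the perimeter contribution across $\partial B_{\rho'}$ tends to $0$ as $k \to \infty$. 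If $\per(\Xi^{k}_{\rho'}) < \per(\Omega^{j,k})$, I replace $\Xi^{k}_{\rho'}$ inside $B$ by a perimeter minimizer with the same trace on $\partial B_{\rho'}$ (Theorem \ref{t:DeGiorgi}); a small additional perturbation reduces its singular set to finitely many points at a cost of arbitrarily little perimeter, so by Lemma \ref{l:concon} the resulting approximation lies in $\mathcal{H}(\Omega^j, An)$. Minimality of $\Omega^{j,k}$ in this class then gives
\[
\per(\Omega^{j,k}) \leq \per(\Xi^{k}_{\rho'}) + \delta_k, \qquad \delta_k \to 0.
\]
Cancelling the part outside $B_{\rho'}$, letting $k \to \infty$ by lower semicontinuity of perimeter and then $\rho' \uparrow \rho$, I conclude $\per(\tilde{\Omega}^j, B) \leq \per(\Theta, B)$.

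Since $y \in An$ is arbitrary and $B$ can be taken arbitrarily small, $\tilde{\Omega}^j$ is a De Giorgi solution of the Plateau problem on every small ball in $An$. Theorem \ref{t:DeGiorgi} immediately yields that $\partial \tilde{\Omega}^j \cap An$ is a smooth minimal hypersurface outside a singular set of Hausdorff dimension at most $n-7$, and stability is automatic for perimeter minimizers. For the varifold identity $V^j \res An = \partial \tilde{\Omega}^j \res An$ I use \emph{mass convergence}: taking $\Theta = \tilde{\Omega}^j$ in the inequality above and invoking lower semicontinuity gives $\per(\Omega^{j,k}, B) \to \per(\tilde{\Omega}^j, B)$, so $\|V^j\|(B) = \lim_k \cH^n(\partial \Omega^{j,k} \cap B) = \cH^n(\partial \tilde{\Omega}^j \cap B)$ for every small ball $B \subset An$. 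Combined with the fact that $\partial \Omega^{j,k}$ is asymptotically locally perimeter-minimizing, Theorem \ref{t:SScomp} forces the varifold limit $V^j \res An$ to coincide with the multiplicity-one varifold induced by $\partial \tilde{\Omega}^j$.

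The main obstacle I anticipate is the \emph{smoothing step} required to apply Lemma \ref{l:concon}: the lemma demands that the competitor be smooth outside a \emph{finite} set, whereas a Plateau minimizer may have a singular set of Hausdorff dimension $n-7$, which is positive when $n \geq 8$. Producing a competitor whose singular set is finite, while losing only a vanishingly small amount of perimeter and keeping the trace on $\partial B_{\rho'}$ essentially fixed, is the delicate technical point. A secondary issue is the passage from $L^1$-convergence of Caccioppoli sets plus perimeter convergence to varifold convergence of the boundaries, which here is supplied cleanly by Theorem \ref{t:SScomp} applied to the locally almost-minimizing sequence $\partial \Omega^{j,k}$.
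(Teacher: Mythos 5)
Your overall strategy matches the paper's: localize to small balls from Lemma \ref{l:concon}, show $\tilde{\Omega}^j$ is a local perimeter minimizer by constructing competitors for $\Omega^{j,k}$ in the class $\mathcal{H}(\Omega^j, An)$, and then upgrade $L^1$-convergence plus perimeter convergence to varifold convergence. However, two steps in your argument are genuine gaps, and in both cases the paper routes around the difficulty rather than through it.

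The first gap is the smoothing step, which you correctly flag as the delicate point but do not resolve. You insert a Plateau minimizer inside $B_{\rho'}(y)$ and then posit that ``a small additional perturbation reduces its singular set to finitely many points at a cost of arbitrarily little perimeter.'' There is no mechanism given for this perturbation, and none is obvious: the singular set of a Plateau minimizer is an $(n-7)$-dimensional set of area-minimizing cones, and there is no known cheap local surgery that removes such singularities while fixing the trace and losing only $o(1)$ perimeter. The paper never passes through a Plateau minimizer at all. Instead it mollifies the indicator function of the glued set $\Xi^{j,k}$ directly: $g_\eps := \ind_{\Xi^{j,k}} * \varphi_\eps$, takes a regular level set $\Delta_\eps = \{g_\eps > t\}$ (smooth by Sard), and notes that $\cH^n(\partial\Delta_\eps) \to \per(\Xi^{j,k})$. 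Since $\Delta_\eps$ no longer agrees with $\Omega^{j,k}$ outside $B_\rho(y)$, a patching argument in a thin annulus $\overline{B}_b(y)\setminus B_a(y)$ (the ``freezing'' argument of Subsection \ref{ss:freezing}) restores the boundary condition. This produces a competitor $\Delta^{j,k}$ that is smooth outside finitely many points with $\limsup_k(\cH^n(\partial\Delta^{j,k}) - \cH^n(\partial\Omega^{j,k})) \leq -\eta$, so Lemma \ref{l:concon} applies directly. The detour through a Plateau minimizer is both unnecessary and the source of your difficulty.

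The second gap is your final appeal to Theorem \ref{t:SScomp} to force $V^j\res An$ to equal the varifold induced by $\partial\tilde{\Omega}^j$. The hypotheses of Theorem \ref{t:SScomp} require the $\Gamma^k$ to be \emph{stable minimal} hypersurfaces, but the $\partial\Omega^{j,k}$ are merely asymptotically area-nonincreasing in the homotopic Plateau class; they are not minimal surfaces, so the theorem does not apply to them. What is actually needed is an elementary measure-theoretic fact: if Caccioppoli sets $\Omega^{j,k}\to\tilde{\Omega}^j$ in $L^1$ \emph{and} $\per(\Omega^{j,k},U)\to\per(\tilde{\Omega}^j,U)$, then the boundary varifolds converge. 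This is Proposition \ref{p:varivscacc} in the appendix, and the paper cites it precisely here; its proof is a blow-up argument that has nothing to do with curvature estimates. Your earlier step (taking $\Theta = \tilde{\Omega}^j$ and using lower semicontinuity to conclude perimeter convergence) does correctly supply the needed input to that proposition.
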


Recall that in this section we use the convention
of Definition \ref{d:convention}. 
In the third step we use Lemma \ref{l:min} to conclude that
the sequence $\tilde{\Gamma}^j$ and the varifold 
$\tilde{V}$ meet the requirements
of Proposition \ref{p:replacement}.

\subsection{Proof of Lemma \ref{l:concon}}
\label{ss:concon}
The proof of the lemma is achieved 
by exhibiting a suitable
homotopy between $\Omega^{j,k}$ and $\Xi$. The
key idea is:
\begin{itemize}
\item First deform $\Omega^{j,k}$ to
the set $\tilde{\Omega}$ which is
the union of $\Omega^{j,k}\setminus B$ and
the cone with vertex $y$ and base $\Omega^{j,k}\cap \partial B$;
\item Then deform $\tilde{\Omega}$ to $\Xi$.
\end{itemize}
The surfaces of the homotopizing family 
do not gain too much in area, provided
$B=B_\rho (y)$ is sufficiently small and $k$ sufficiently
large: in this case the area of the surface $\Gamma^{j,k}\cap
B$ will, in fact, be close to the area of the cone.
This ``blow down--blow up'' procedure is an idea which
we borrow from \cite{Sm} (see Section 7 of \cite{CD}).

\begin{proof}[Proof of Lemma \ref{l:concon}] We fix
$y\in An$ and $j\in \NN$. 
Let $B= B_\rho (y)$ with $B_{2\rho} (y) \subset An$ 
and consider
an open set $\Xi$ as in the statement of the Lemma.
The choice of the radius of the ball $B_\rho (y)$
and of the constant $k_0$
(which are both independent of the set $\Xi$)
will be determined at the very end of the proof.

\medskip

{\bf Step 1: Stretching $\Gamma^{j,k}\cap \partial B_r (y)$.} 
First of all, we choose $r\in (\rho, 2\rho)$ 
such that, for every $k$,
\begin{equation}\label{e:req1}
\mbox{$\Gamma^{j,k}$ is
regular in a neighborhood of $\partial B_r (y)$ and intersects
it transversally}.
\end{equation}
In fact, since each $\Gamma^{j,k}$ has 
finitely many singularities, Sard's Lemma implies
that \eqref{e:req1} is satisfied by a.e. $r$.
We assume moreover that $2\rho$ is smaller than the
injectivity radius. For each $z\in \overline{B}_r (y)$ 
we consider the closed geodesic arc $[y,z]\subset 
\overline{B}_r (y)$ joining $y$ and $z$. As usual,
$(y,z)$ denotes $[y,z]\setminus \{y,z\}$.
We let $K$ be the open cone consisting 
\begin{equation}\label{e:cone}
K \;=\; \bigcup_{z\in \partial B \cap \Omega^{j,k}} (y,z)\, .
\end{equation}
We now show that $\Omega^{j,k}$ can be homotopized
through a family $\tilde{\Omega}_t$ 
to a $\tilde{\Omega}_1$ in such a way that
\begin{itemize}
\item $\max_t \cH^n (\partial \tilde{\Omega}_t) - 
\cH^n (\partial \Omega^{j,k})$ 
can be made arbitrarily small;
\item $\tilde{\Omega}_1$ coincides with $K$ in a neighborhood
of $\partial B_r (y)$.
\end{itemize}

First of all consider a smooth function $\varphi:[0, 2\rho]
\to [0,2\rho]$, with
\begin{itemize}
\item $|\varphi(s) -s|\leq\eps$ and $0\leq \varphi'\leq 2$;
\item $\varphi (s)=s$ if $|s-r|>\eps$
and $\varphi\equiv r$ in a neighborhood of $r$.
\end{itemize}
Set $\Phi (t,s) := (1-t) s + t \varphi (s)$.
Moreover, for every $\lambda\in [0,1]$ and every
$z\in \overline{B}_r (y)$ let $\tau_\lambda (z)$
be the point $w\in [y,z]$ with $\dist\, (y,w)=\lambda\, 
\dist\, (y,z)$. For $1<\lambda<2$, we can still define $\tau_\lambda(z)$ to be the corresponding point on the geodesic that is the extension of $[y,z]$. (Note that by the choice of $\rho$ this is well defined.) We are now ready to define
$\tilde{\Omega}_t$ (cp. with Figure 3, left).
\begin{itemize}
\item $\tilde{\Omega}_t\setminus An (y, r-\eps, r+\eps)
= \Omega^{j,k} \setminus An (y, r-\eps, r+\eps)$;
\item $\tilde{\Omega}_t \cap \partial B_s (y)
= \tau_{s/\Phi (t,s)} (\Omega^{j,k}\cap \partial B_{\Phi (t,s)})$
for every $s\in (r-\eps, r+\eps) $.
\end{itemize}

Thanks to \eqref{e:req1}, for $\eps$ sufficiently small
$\tilde{\Omega}_t$ has the desired properties. 
Moreover, since $\Xi$ coincides with $\Omega^{j,k}$
on $M\setminus B_\rho (y)$, the same argument 
can be applied to $\Xi$. This shows that
\begin{equation}\label{e:wlog}
\mbox{w.l.o.g. we can assume $K=\Xi=\Omega^{k,j}$
in a neighborhood of $\partial B_r (y)$,}
\end{equation}
(cp. with Figure 3, right).
\medskip

{\bf Step 2: The homotopy} 
We then consider the following family of open
sets $\{\Omega_t\}_{t\in [0,1]}$:
\begin{itemize}
\item $\Omega_t\setminus \overline{B}_r (y) = \Omega^{j,k}
\setminus \overline{B}_r (y)$ for every $t$;
\item $\Omega_t \cap An (y, |1-2t| r, r)
= K\cap An (y, |1-2t| r, r)$ for every $t$;
\item $\Omega_t \cap \overline{B}_{(1-2t)r} (y)
= \tau_{1-2t} (\Omega^{k,j}\cap \overline{B}_r (y))$
for $t\in [0,\frac{1}{2}]$;
\item $\Omega_t\cap \overline{B}_{(2t-1)r} (y)
= \tau_{2t-1} (\Xi\cap \overline{B}_r (y))$
for $t\in [\frac{1}{2}, 1]$.
\end{itemize}

\begin{figure}[htbp]
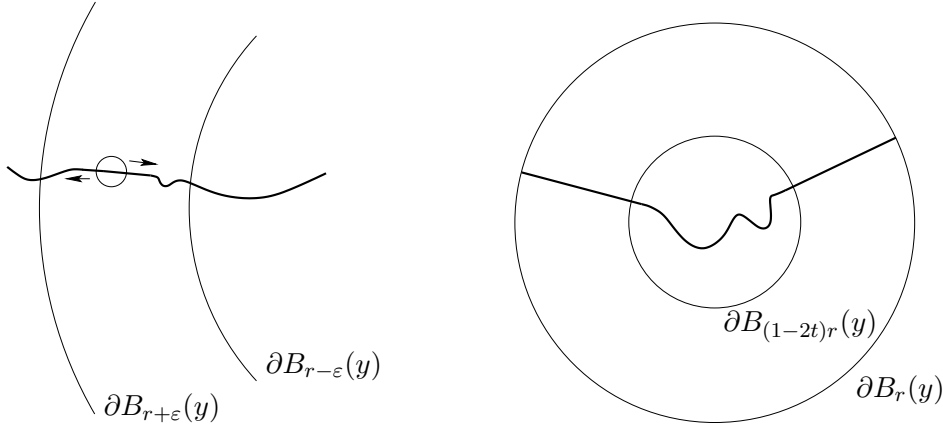

\begin{center}
\input{Pic3.pstex_t} 
\input{Pic3.1.pstex_t}
    \caption{The left picture illustrates
the stretching of $\Gamma^{j,k}$ into
a cone--like surface in a neighborhood
of $\partial B_r (y)$. The right picture shows
a slice $\Omega_t\cap B_r (y)$ for $t\in (0,1/2)$.} 
    \label{f:concon}
\end{center}
\end{figure}

Because of \eqref{e:wlog}, this family satisfies (s1)--(s3),
(sw1) and (sw3). It remains to check, 
\begin{equation}\label{e:am3bis}
\max_t \cH^n (\partial \Omega_t)
\;\leq\; \cH^n (\partial \Omega^{j,k}) +\frac{1}{8j}\, 
\qquad \forall k\geq k_0
\end{equation}
for a suitable choice of $\rho$, $r$ and $k_0$.

First of all we observe that, by the smoothness of $M$,
there are constants $\mu$ and $\rho_0$, depending
only on the metric, such that the following
holds for every $r<2\rho<2 \rho_0$ and $\lambda\in [0,1]$:
\begin{eqnarray}
&&\cH^n (K) \;\leq\; \mu r \cH^{n-1} (\partial
\Omega^{j,k}\cap \partial B_r (y))\label{e:cone_est}\\
&&\cH^n ([\partial (\tau_\lambda (\Omega^{j,k}
\cap \overline{B}_r (y)))]\cap B_{\lambda r} (y))\;\leq\; 
\mu\cH^n (\partial \Omega^{j,k} \cap B_r (y))\label{e:shrink1}\\
&&\cH^n ([\partial (\tau_\lambda (\Xi
\cap \overline{B}_r (y)))]\cap B_{\lambda r} (y))\;
\leq\; \mu\cH^n (\partial \Xi \cap B_r (y))\label{e:shrink2}\\
&&\int_0^{2\rho} \cH^{n-1} (\partial \Omega^{j,k}
\cap \partial B_\tau (y))\, d\tau
\;\leq\; \mu \cH^n (\partial \Omega^{j,k}\cap B_{2\rho} (y))
\label{e:coarea}\, .
\end{eqnarray}
In fact, for $\rho$ small, $\mu$ will be close to $1$. \eqref{e:cone_est}, \eqref{e:shrink1} and \eqref{e:shrink2}
give the obvious estimate
\begin{equation}\label{e:est_max}
\max_t \cH^n (\partial \Omega_t)-\cH^n (\partial \Omega^{j,k})
\;\leq\; \mu\cH^n (\partial \Omega^{j,k}\cap B_{2\rho} (y))
+ \mu r \cH^{n-1} (\partial \Omega^{j,k}\cap \partial B_r (y))\, .
\end{equation}
Moreover, by \eqref{e:coarea} we can
find $r\in (\rho, 2\rho)$ which,
in addition to \eqref{e:est_max}, satisfies
\begin{equation}\label{e:req2}
\cH^{n-1} (\partial \Omega^{j,k}\cap \partial
B_r (y))\;\leq\; \frac{2\mu}{\rho} \cH^n (\partial \Omega^{j,k}
\cap B_{2\rho} (y))\, .
\end{equation}
Hence, we conclude
\begin{equation}\label{e:est_max2}
\max_t \cH^n (\partial \Omega_t)
\;\leq\; \cH^n (\partial \Omega^{j,k})
+ (\mu+2\mu^2) \cH^n (\partial \Omega^{j,k}\cap B_{2\rho} (y))\, .
\end{equation}
Next, by the convergence of $\Gamma^{j,k}=\partial\Omega^{j,k}$
to the stationary varifold $V^j$, we can choose $k_0$ such
that 
\begin{equation}\label{e:req3}
\cH^n (\partial \Omega^{j,k}\cap B_{2\rho} (y))
\;\leq\; 2 \|V^j\| (B_{4\rho} (y)) \qquad \mbox{for $k\geq k_0$.}
\end{equation}
Finally, by the monotonicity formula,
\begin{equation}\label{e:est_max3}
\|V^j\| (B_{4\rho} (y))\leq C_M \|V^j\| (M) \rho^n\, .
\end{equation}
We are hence ready to specify the choice of the
various parameters.
\begin{itemize}
\item We first determine the constants $\mu$ and $\rho_0<
\Inj (M)$ (which depend only on $M$) which
guarantee \eqref{e:cone_est}, \eqref{e:shrink1},
\eqref{e:shrink2} and \eqref{e:coarea};
\item We subsequently choose $\rho<\rho_0$ so small that 
$2 (\mu+ 2 \mu^2) C_M \|V^j\| (M) \rho^n < (8j)^{-1}$;
and $k_0$ so that \eqref{e:req3} holds.
\end{itemize}
At this point $\rho$ and $k$ are
fixed and, choosing $r\in (\rho, 2\rho)$ satisfying
\eqref{e:req1} and \eqref{e:req2}, we construct
$\{\partial \Omega_t\}$ as above, concluding
the proof of the lemma.

\subsection{Proof of Lemma \ref{l:min}}
Fix $j\in \NN$ and $y\in An$ and
let $B= B_\rho (y)\subset An$ be the ball given by Lemma
\ref{l:concon}. We claim that $\tilde{\Omega}^j$
minimizes the perimeter in the class $\mathcal{P}
(\tilde{\Omega}^j, B_{\rho/2} (y))$.
Assume, by contradiction, that $\Xi$ is
a Caccioppoli set with $\Xi\setminus B_{\rho/2} (y)
= \tilde{\Omega}\setminus B_{\rho/2} (y)$ and
\begin{equation}\label{e:competitor}
\per (\Xi) \;<\; \per (\tilde{\Omega}^j)
-\eta\, .
\end{equation}
Note that, since $\ind_{\Omega^{j,k}} \to 
\ind_{\tilde{\Omega}^j}$ strongly in $L^1$, 
up to extraction of a subsequence we can assume the existence
of $\tau\in (\rho/2, \rho)$ such that
\begin{equation}\label{e:Fubini}
\lim_{k\to\infty}
\|\ind_{\tilde{\Omega}^j} - \ind_{\Omega^{j,k}}\|_{L^1
(\partial B_\tau (y))}\;=\; 0\, .
\end{equation}
We also recall that, by the semicontinuity
of the perimeter,
\begin{equation}\label{e:semicont}
\per (\tilde{\Omega}^j)
\;\leq\; \liminf_{k\to\infty}
\cH^n (\partial \Omega^{j,k})\, .
\end{equation}
Define therefore the set $\Xi^{j,k}$ by setting
$$
\Xi^{j,k}\;=\; (\Xi \cap B_\tau (y))
\cup (\Omega^{j,k}\setminus B_\tau (y))\, .
$$
\eqref{e:competitor}, \eqref{e:Fubini}
and \eqref{e:semicont} imply
\begin{equation}\label{e:competitor2}
\limsup_{k\to\infty} [\per (\Xi^{j,k})
- \cH^n (\partial \Omega^{j,k})]
\;\leq\; -\eta\, .
\end{equation}
Fix next $k$ and
recall the following standard way of approximating $\Xi^{j,k}$
with a smooth set. We first fix a compactly supported
convolution kernel $\varphi$, then we consider the
function $g_\eps := \ind_{\Xi^{j,k}}*\varphi_\eps$
and finally look at a smooth level set $\Delta_\eps
:= \{g_\eps > t\}$ for some $t\in (\frac{1}{4},\frac{3}{4})$. 
Then $\cH^n (\partial\Delta_\eps)$ converges to $\per (\Xi^{j,k})$ as $\eps\to 0$
(see \cite{Giu} in the euclidean case and \cite{MP} for
the general one).

Clearly, $\Delta_\eps$ does not coincide anymore 
with $\Omega^{j,k}$ outside $B_\rho (y)$. 
Therefore, fix $(a,b)\subset (\tau, \rho)$ with the
property that $\Sigma := \Omega^{j,k}\cap \overline{B}_b (y)
\setminus B_a (y)$ is smooth. Fix
a regular tubular neighborhood $T$ of $\Sigma$ and corresponding
normal coordinates $(\xi,\sigma)$ on it.
Since
$\Xi^{j,k} \setminus B_\tau (y)
= \Omega^{j,k}\setminus B_\tau (y)$, for $\eps$ sufficiently small
$\partial \Delta_\eps\cap \overline{B}_b (y)
\setminus B_a (y)\subset T$ and $T\cap \Delta_\eps$
is the set $\{\sigma<f_\eps (\xi)\}$ for some smooth function
$f_\eps$. Moreover, as $\eps\to 0$,
$f_\eps \to 0$ smoothly. 

Therefore, a patching
argument entirely analogous to the one of the freezing
construction (see Subsection \ref{ss:freezing}), allows us to modify $\Xi^{j,k}$
to a set $\Delta^{j,k}$ with the following properties:
\begin{itemize}
\item $\partial \Delta^{j,k}$ is smooth outside of
a finite set;
\item $\Delta^{j,k}\setminus B= \Omega^{j,k}\setminus B$;
\item $\limsup_k (\cH^n (\partial\Delta^{j,k}) - \cH^n 
(\partial \Omega^{j,k}))\leq -\eta<0$.
\end{itemize} 
For $k$ large enough, Lemma \ref{l:concon}
implies that $\Xi^{j,k}\in \cH(\Omega^j, An)$,
which would contradict the minimality of the sequence
$\Omega^{j,k}$.

\medskip

Next, in order to show that the varifold $V^j$ is induced
by $\partial \tilde{\Omega}^j$, it suffices to show
that in fact $\cH^n (\partial \Omega^{j,k})$
converges to $\cH^n (\partial \tilde{\Omega}^j)$
(since we have not been able to find a precise
reference for this well--known fact, we give
a proof in the appendix; cp. with Proposition \ref{p:varivscacc}).
On the other hand, if this is not the case, then we
have
$$
\cH^n (\partial \tilde{\Omega}^j \cap B_{\rho/2} (y))
\;<\; \limsup_{k\to \infty} 
\cH^n (\partial \Omega^{j,k}\cap B_{\rho/2} (y))
$$
for some $y\in An$ and some $\rho$ to which we can
apply the conclusion Lemma \ref{l:concon}.
We can then use $\tilde{\Omega}^j$ in place
of $\Xi$ in the argument of the previous step
to contradict, once again, the minimality of the
sequence $\{\Omega^{j,k}\}_k$.
The stationarity and stability of the surface
$\partial \tilde{\Omega}^j$ is, finally, an obvious
consequence of the variational principle.

\begin{figure}[htbp]
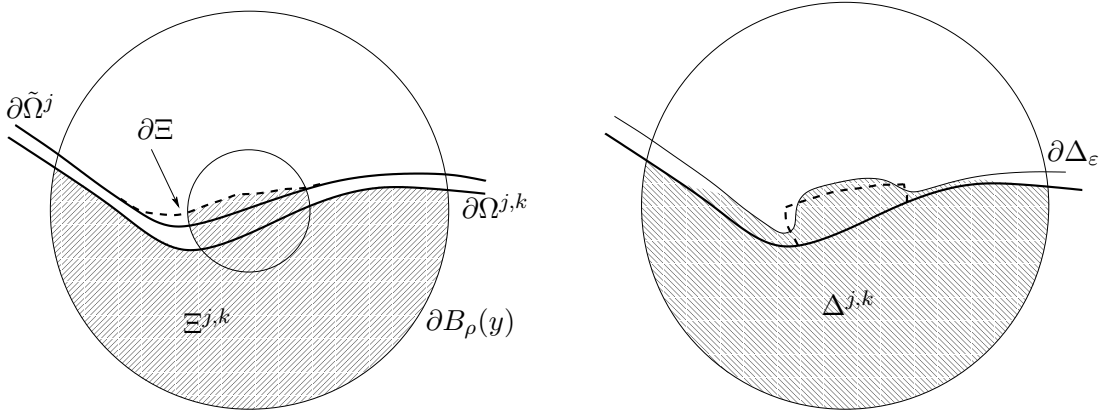

\begin{center}
\input{Pic4.pstex_t} 
\input{Pic4.1.pstex_t}
    \caption{On the left,
the set $\tilde{\Omega}^j$, the competitor $\Xi$,
one set of the sequence $\{\Omega^{j,k}\}_k$ and
the corresponding $\Xi^{j,k}$. On the right,
the smoothing $\Delta_\eps$ of $\Xi^{j,k}$ and the
final set $\Delta^{j,k}$ (a competitor for
$\Omega^{j,k}$).} 
    \label{f:competitor}
\end{center}
\end{figure}

\subsection{Proof of proposition \ref{p:replacement}}
\label{ss:rep_proof}

Consider the varifolds $V^j$ and
the diagonal sequence $\tilde{\Gamma}^j = \Gamma^{j,k(j)}$
of Section \ref{ss:rep_setting}. Observe
that $\tilde{\Gamma}^j$ is obtained from
$\Gamma^j$ through a suitable homotopy which
leaves everything fixed outside $An$.
Consider $An (x, \eps, r(x)-\eps)$ containing $An$. 
It follows from the a.m. property of 
$\{\Gamma^j\}$ that $\{\tilde{\Gamma}^j\}$ is also
a.m. in
$An (x,\eps, r(x)-\eps)$.

Note next that if a sequence is a.m. in an open
set $U$ and $U'$ is a second open set contained
in $U$, then the sequence is a.m. in $U'$ as well.
This trivial observation and the discussion above
implies that $\tilde{\Gamma}^j$ is a.m. in any
$An\in \An_{r(x)} (x)$.

Fix now an annulus $An'=An (x, \eps, r(x)-\eps)\supset\supset
An$. Then $M = An'\cup (M\setminus An)$.
For any $y\in M\setminus An$ (and $y\neq x$) 
consider $r'(y):=\min\{r (y), \dist (y, An)\}$. If
$An''\in \An_{r'(y)} (y)$, then $\Gamma^j\cap An'' =
\tilde{\Gamma}^j \cap An''$, and hence
$\{\tilde{\Gamma}^j\}$ is a.m. in $An''$. If
$y\in An'$, then we can set $r'(y) = \min\{r(y), \dist (y,
\partial An')\}$. If $An''\in \An_{r'(y)} (y)$,
then $An''\subset An'$ and, since $\{\tilde{\Gamma}^j\}$ is a.m.
in $An'$ by the argument above, $\{\tilde{\Gamma}^j\}$ is a.m.
in $An''$. 

We next show that $\tilde{V}$ is a replacement
for $V$ in $An$. By Theorem \ref{t:SScomp}, $\tilde{V}$
is a stable minimal hypersurface in $An$. 
It remains to show that $\tilde{V}$ is stationary.
$\tilde{V}$ is obviously stationary in $M\setminus An$,
because it coincides with $V$ there. Let next
$An'\supset\supset An$. Since $\{An', M\setminus An\}$
is a covering of $M$, we can subordinate a partition
of unity $\{\varphi_1, \varphi_2\}$ to it. By the linearity
of the first variation, we get
$[\delta \tilde{V}] (\chi) =[\delta \tilde{V}] (\varphi_1 \chi)
+ [\delta \tilde{V}] (\varphi_2\chi) = [\delta \tilde{V}] (\varphi_1 \chi)$.
Therefore it suffices to show that $\tilde{V}$ is stationary
in $An'$. Assume, by contradiction,
that there is $\chi\in \mathcal{X}_c (An')$ such that $[\delta \tilde{V}]
(\chi) \leq -C <0$ and denote by $\psi$ the isotopy defined 
by $\frac{\partial\psi(x,t)}{\partial t}=\chi(\psi(x,t))$. We
set
\begin{equation}
 \tilde{V} (t)\;:=\;\psi(t)_{\sharp} \tilde{V}\qquad
\Sigma^j (t)\;=\;\psi(t,\tilde{\Gamma}^j).
\end{equation}
By continuity of the first variation there is $\eps>0$ such that
$\delta \tilde{V}(t)(\chi)\leq-C/2$ for all $t\leq \eps$.
Moreover, since $\Sigma^j(t)\to \tilde{V}(t)$
in the sense of varifolds, there is $J$ such that 
\begin{equation}\label{e:deriv}
[\delta \Sigma^j (t)] (\chi)\leq 
-\frac{C}{4}\quad\text{for}\;j>J\;\text{and}\; t\leq\eps.
\end{equation}
Integrating \eqref{e:deriv} we conclude
$ \cH^n(\Sigma^j (t))\leq \cH^n (\tilde{\Gamma}^j) - Ct/8$
for every $t\in [0,\eps]$ and $j\geq J$. This
 contradicts the a.m. property
of $\tilde{\Gamma}^j$ in $An'$, for $j$ large enough.

Finally, observe that $\cH^n (\tilde{\Gamma}^j)
\leq \cH^n (\Gamma^j)$ by construction and
$\liminf_n (\cH^n (\tilde{\Gamma}^j)-\cH^n (\Gamma^j))\geq 0$,
because otherwise we would contradict the a.m. property
of $\{\Gamma^j\}$ in $An$. We thus conclude that $\|V\| (M)=
\|\tilde{V}\| (M)$.
\end{proof}

\section{The regularity of varifolds with replacements}
\label{s:reg}

In this section we prove Proposition \ref{p:reg}. We
recall that we adopt the convention of Definition
\ref{d:convention}.
We first list several technical facts from geometric measure theory.

\subsection{Maximum principle} The first one is just
a version of the classical maximum principle. 
 
\begin{theorem}\label{t:max_prin}
(i) Let $V$ be a stationary varifold in a ball $\mathcal{B}_r (0)\subset \R^{n+1}$.
If $\supp (V)\subset \{z_{n+1}\geq 0\}$ and $\supp (V)\cap \{z_{n+1}=0\}\neq\emptyset$,
then $\mathcal{B}_r (0)\cap \{z_{n+1}=0\}\subset \supp (V)$.

(ii) Let $W$ be a stationary varifold in an open set $U\subset M$ and
$K$ be a smooth strictly convex closed set. If $x\in \supp (V)\cap \partial K$,
then $\supp (V)\cap B_r (x)\setminus K \neq \emptyset$ for every positive $r$.
\end{theorem}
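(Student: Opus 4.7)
The plan is to prove (i) as a classical maximum principle for stationary varifolds via a first variation argument together with Allard's constancy theorem, and then reduce (ii) to (i) by exploiting the strict convexity of $K$ in normal coordinates.

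\textbf{Part (i).} Let $x_0\in \supp(V)\cap\{z_{n+1}=0\}$. I would test stationarity against vector fields $X(z)=\varphi(z)\,e_{n+1}$; a direct computation of the tangential divergence yields
\[
\mathrm{div}_\pi X \;=\; \langle \nabla\varphi,\, \mathbf{P}_\pi e_{n+1}\rangle,
\]
where $\mathbf{P}_\pi$ denotes the orthogonal projection onto the $n$-plane $\pi$. Taking the product form $\varphi(z)=\psi(z')\,\chi(z_{n+1})$ with $\psi\geq 0$ a horizontal cutoff and $\chi$ smooth nonincreasing with $\chi'<0$ exactly on $(0,\delta)$, the identity $\int \mathrm{div}_\pi X\,dV=0$ becomes
\[
\int \psi\,(-\chi')\,|\mathbf{P}_\pi e_{n+1}|^2\,dV \;=\; \int \chi\,\langle \nabla'\psi,\,(\mathbf{P}_\pi e_{n+1})'\rangle\,dV.
\]
The left-hand side is nonnegative; localizing $\chi$ to a thin layer above $\{z_{n+1}=0\}$ and letting $\delta\downarrow 0$, the right-hand side tends to zero (since $V$ has locally finite mass and is concentrated above the hyperplane). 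This forces $\mathbf{P}_\pi e_{n+1}=0$ for $V$-a.e.\ point of the touching set $T:=\supp V\cap\{z_{n+1}=0\}$, i.e.\ the approximate tangent planes along $T$ are horizontal. Allard's constancy theorem then implies that $V$ restricted to a neighborhood of $T$ is an integer multiple of the horizontal $n$-plane; in particular, $T$ is relatively open in $\mathcal{B}_r(0)\cap\{z_{n+1}=0\}$. Since $T$ is also closed and the base is connected, $T=\mathcal{B}_r(0)\cap\{z_{n+1}=0\}$, which is the claimed inclusion.

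\textbf{Part (ii).} Argue by contradiction: suppose $\supp V\cap B_r(x)\subset K$ for some $r>0$. Fix normal coordinates $\exp_x^{-1}$ on a small geodesic ball so that $T_x\partial K\leftrightarrow\{z_{n+1}=0\}$ and the inward normal to $\partial K$ corresponds to $e_{n+1}$. Smoothness and strict convexity of $K$ give constants $c,\eps>0$ with
\[
K\cap \mathcal{B}_\eps(0)\;\subset\;\{z_{n+1}\geq c|z'|^2\}\;\subset\;\{z_{n+1}\geq 0\}.
\]
The pushforward $(\exp_x^{-1})_\sharp V$ is stationary with respect to the pulled-back metric $g_0$, has support in $\{z_{n+1}\geq 0\}$, and admits $0$ as a touching point. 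The argument of (i) goes through for metrics $C^2$-close to Euclidean (the additional terms in the first variation are of order $|z|^2$ times lower-order quantities, absorbed by localizing on a sufficiently small ball), yielding $\{z_{n+1}=0\}\cap \mathcal{B}_\delta(0)\subset\supp (\exp_x^{-1})_\sharp V$ for some $\delta>0$. But any such horizontal point $(z',0)$ with $z'\neq 0$ must also satisfy $z_{n+1}\geq c|z'|^2$, which forces $c|z'|^2\leq 0$; a contradiction.

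The most delicate step is the openness conclusion in (i): passing from ``$\mathbf{P}_\pi e_{n+1}=0$ almost everywhere on $T$'' to ``$T$ is relatively open'' genuinely requires the constancy theorem (or an equivalent propagation argument based on the rectifiability of stationary integer varifolds with horizontal tangent planes). The adaptation of (i) to the Riemannian setting needed in (ii) is routine, since normal coordinates make the metric perturbation quadratic near $0$ and the first-variation identity is stable under such perturbations.
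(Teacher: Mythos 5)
The paper does not actually prove this theorem: it refers to \cite{SW} for part (i) and to Appendix~B of \cite{CD} for part (ii). Your attempt to supply a self-contained proof is ambitious, but part (i) contains two substantive gaps, and part (ii) rests on (i).

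\textbf{The limit $\delta\downarrow 0$ in part (i).} Your first-variation identity is correct. However, with $\chi$ as you chose it (equal to $1$ on $(-\infty,0]$, decreasing on $(0,\delta)$, equal to $0$ afterwards), the right-hand side does \emph{not} tend to zero. Since $\chi(z_{n+1})\to \mathbf 1_{\{z_{n+1}\le 0\}}$ pointwise and $\supp V\subset\{z_{n+1}\ge 0\}$, dominated convergence gives
\[
\int \chi\,\langle \nabla'\psi,(\mathbf{P}_\pi e_{n+1})'\rangle\,dV
\;\longrightarrow\;
\int_{\supp V\cap\{z_{n+1}=0\}}\langle \nabla'\psi,(\mathbf{P}_\pi e_{n+1})'\rangle\,dV,
\]
which is exactly the quantity whose vanishing you are trying to establish, not something that can be assumed to vanish. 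Meanwhile the left-hand side has no controlled limit either: $-\chi'$ is of size $\delta^{-1}$ on the layer $\{0<z_{n+1}<\delta\}$, and for a general stationary varifold there is no a priori bound of the type $\|V\|(\{0<z_{n+1}<\delta\})=O(\delta)$ that would let you pass to a limit. So the step ``this forces $\mathbf{P}_\pi e_{n+1}=0$ $V$-a.e.\ on $T$'' is unjustified.

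\textbf{The constancy step in part (i).} Even granting horizontal tangent planes $V$-a.e.\ on $T$, Allard's constancy theorem (Theorem~41.1 of \cite{Si}) requires $\supp V$ to be contained in the connected $n$-manifold on an open set. Knowing only that the tangent planes are horizontal at the subset $T\subset\supp V$ tells you nothing about $\supp V$ in a full neighborhood of $T$: the support may spill into $\{z_{n+1}>0\}$ arbitrarily close to $T$, and indeed this is the generic situation. The jump from ``horizontal planes a.e.\ on $T$'' to ``$T$ is relatively open'' therefore does not follow from the constancy theorem, and a genuine propagation argument is needed. This is precisely why \cite{SW} is not an exercise.

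\textbf{Part (ii).} Your reduction to (i) via normal coordinates and the quadratic barrier $K\cap\mathcal{B}_\eps\subset\{z_{n+1}\ge c|z'|^2\}$ is a reasonable strategy, and the final contradiction is correct logic, but it only inherits whatever proof you have of (i) (plus the nontrivial task of carrying that argument through a perturbed metric). The route the paper points to, Appendix~B of \cite{CD}, is instead direct: one tests stationarity against a compactly supported vector field built from a defining function of the strictly convex $K$, and strict convexity forces the tangential divergence to have a definite sign on any $n$-plane, producing an immediate contradiction with $\delta V=0$ without invoking a Euclidean maximum principle at all. That direct argument is more elementary and does not depend on (i); I would recommend it over the reduction.
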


For (ii) we refer, for instance,
to Appendix B of \cite{CD}, whereas (i) is a very special case
of the general result of \cite{SW}. 

\subsection{Tangent cones}\label{ss:TC} The second device 
is a fundamental tool 
of geometric measure theory. Consider a stationary varifold
$V\in \mathcal{V} (U)$ with $U\subset M$ and fix a point $x\in \supp (V)\cap U$.
For any $r< \Inj (M)$ consider the rescaled exponential map
$T^x_r : \mathcal{B}_1 \ni z \mapsto \exp_x (rz)\in B_r (x)$,
where $\exp_x$ denotes the exponential map with base point $x$. We then
denote by $V_{x,r}$ the varifold $(T_r^x)^{-1}_\sharp V \in 
\mathcal{V} (\mathcal{B}_1)$. Then, as a consequence of the monotonicity
formula, one concludes that for any sequence $\{V_{x,r_n}\}$ there
exists a subsequence converging to a stationary varifold $V^*$
(stationary for the euclidean metric!), which in addition is a cone 
(see Corollary 42.6 of \cite{Si}). Any such cone
is called {\em tangent cone to $V$ in $x$}.
For varifolds with the replacement property, the following is a fundamental
step towards the regularity (first proved by Pitts for $n\leq 5$ in \cite{P}).

\begin{lemma}\label{l:tangent cones}
Let $V$ be a stationary varifold in an open set $U\subset M$ having
a replacement in any annulus $An\in \An_{r(x)}(x)$ for some positive function $r$.
Then:
\begin{itemize}
\item $V$ is integer rectifiable; 
\item $\theta(x,V)\geq 1$ for any $x\in U$;
\item Any tangent cone $C$ to $V$ at $x$ is a minimal hypersurface
for general $n$ and (a multiple of) a hyperplane for $n\leq 6$.
\end{itemize}
\end{lemma}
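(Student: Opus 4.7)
The plan is to exploit the replacement property to approximate $V$ locally by smooth stable minimal hypersurfaces, use Schoen--Simon compactness (Theorem \ref{t:SScomp}) to control the limits, and then pass to tangent cones to extract the three structural conclusions.

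\textbf{Ball replacement at any $x$.} Fix $x\in U$ and $t<r(x)$. For $\tau_k\downarrow 0$, let $V_k$ denote the replacement of $V$ in $\an(x,\tau_k,t)\in\An_{r(x)}(x)$. Each $V_k$ is a stable minimal hypersurface on this annulus with $\|V_k\|(M)=\|V\|(M)$, and agrees with $V$ outside $\overline{\an(x,\tau_k,t)}$. Passing to a subsequence, Theorem \ref{t:SScomp} applied on compact subsets of $B_t(x)\setminus\{x\}$ yields a varifold limit $W$ which is a stable minimal hypersurface on $B_t(x)\setminus\{x\}$ and coincides with $V$ outside $\overline{B_t(x)}$. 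Since stationary $n$-varifolds cannot carry point masses for $n\geq 1$, $W$ extends across $x$ to a stationary integer rectifiable varifold on all of $B_t(x)$, with $\|W\|(B_t(x))=\|V\|(B_t(x))$.

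\textbf{Tangent cones and density $\geq 1$.} Monotonicity \eqref{e:MonFor} produces tangent cones $C$ of $V$ at any $x\in\supp V\cap U$: $C$ is stationary for the Euclidean metric on $\RR^{n+1}$, conical (Corollary 42.6 of \cite{Si}), with $\theta(0,C)=\theta(x,V)$. The replacement property of $V$ transfers to $C$: for rescalings $V_{x,r_j}\to C$ and any $\an(0,a,b)\subset\RR^{n+1}$, the replacements of $V$ in $\exp_x(r_j\cdot\an(0,a,b))$ (which lie in $\An_{r(x)}(x)$ for $j$ large) rescale and, by a diagonal argument again invoking Theorem \ref{t:SScomp}, converge to a replacement of $C$ in $\an(0,a,b)$. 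Applying the ball replacement construction above to $C$ itself yields that $C$ is a stable minimal hypersurface on $\RR^{n+1}\setminus\{0\}$, integer rectifiable, with $\theta(y,C)\geq 1$ for every $y\in\supp C\setminus\{0\}$. The degenerate case $\supp C=\{0\}$ is precluded by the maximum principle (Theorem \ref{t:max_prin}) applied to $V$ and nested balls $\overline{B_\rho(x)}$. Euclidean monotonicity (where $\Lambda=0$) at any $y\in\supp C\setminus\{0\}$ gives, on letting $\rho\to\infty$, $\theta(0,C)\geq\theta(y,C)\geq 1$. Therefore $\theta(x,V)=\theta(0,C)\geq 1$.

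\textbf{Integer rectifiability and cone structure.} With $\theta(\cdot,V)\geq 1$ on $\supp V\cap U$ and $V$ stationary, Allard's rectifiability theorem gives rectifiability of $V$; integer rectifiability follows because the tangent cones at $\|V\|$-a.e.\ point are integer rectifiable by the previous step, forcing the density of $V$ to be an integer $\haus^n$-a.e.\ on the rectifiable support. For the cone conclusion: by the previous step every tangent cone $C$ is a minimal hypersurface in the sense of Definition \ref{d:convention}. When $n\leq 6$ the Schoen--Simon singular set of $C$ has Hausdorff dimension at most $n-7<0$ and is hence empty, so $C$ is smooth and stable on $\RR^{n+1}\setminus\{0\}$; the classical Bernstein-type theorem for stable smooth minimal cones (Simons, Schoen--Simon--Yau \cite{SSY}) then forces $C$ to be a flat hyperplane with integer multiplicity.

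The main technical obstacle is the rescaling step that transfers the replacement property from $V$ to its tangent cone $C$. One must choose a diagonal sequence in both the rescaling index $j$ and the replacement parameter $\tau$ so that the rescaled replacements converge jointly as varifolds (by Theorem \ref{t:SScomp}) to a stable minimal hypersurface on $\an(0,a,b)$ while preserving the mass identity outside the annulus that certifies them as replacements of $C$. The uniform mass bound $\|V_k\|(M)=\|V\|(M)$ is what makes this double limit tractable.
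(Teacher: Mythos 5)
Your strategy differs from the paper's in its overall organisation: the paper never transfers the replacement property to the tangent cone $C$ but instead works directly with the replacements $V'_k$ of $V$ in $An(x,\lambda\rho_k,(1-\lambda)\rho_k)$, rescales them by $T^x_{\rho_k}$, and takes a subsequential limit $C'$. You instead first build a ``ball replacement'' for $V$, then argue that $C$ inherits the replacement property, and then try to apply the ball-replacement construction to $C$ itself.

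The decisive gap is in the sentence ``Applying the ball replacement construction above to $C$ itself yields that $C$ is a stable minimal hypersurface on $\RR^{n+1}\setminus\{0\}$.'' That construction produces \emph{some} stationary varifold $W$ (a replacement, or a limit of replacements, of $C$ in the punctured ball); it does not tell you that $W=C$. A replacement can in principle differ from the original varifold inside the annulus, and the entire difficulty of this lemma is precisely to rule that out. The paper closes this gap with a short but essential argument that you omit: because $C$ is a cone, the Euclidean density ratio $\rho\mapsto\|C\|(\cB_\rho)/\rho^n$ is constant; the replacement $C'$ agrees with $C$ on $\cB_\lambda\cup An(0,1-\lambda,1)$ and has the same mass, so $\|C'\|(\cB_\rho)/\rho^n$ takes the same constant value on both $(0,\lambda)$ and $(1-\lambda,1)$; by monotonicity for Euclidean stationary varifolds this ratio is monotone nondecreasing, hence constant on all of $(0,1)$, which forces $C'$ to be a cone (Theorem 17.5 of \cite{Si}). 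Two integer rectifiable cones whose supports and $0$-homogeneous densities agree on an open set containing $0$ must coincide, so $C'=C$ and $C$ inherits the regularity of $C'$ on $An(0,\lambda,1-\lambda)$. Without this monotonicity-plus-homogeneity step you have not identified the replacement with $C$, so the conclusions that $C$ is a minimal hypersurface, that $\theta(y,C)\geq 1$ for $y\neq 0$, and hence that $\theta(x,V)\geq 1$ and $V$ is integer rectifiable, are all unsupported. (Two smaller points worth flagging: your ``ball replacement'' $W$ is a stable minimal hypersurface only on $B_t(x)\setminus\{x\}$, and for $n\leq 6$ a point singularity at $x$ is not allowed under Definition \ref{d:convention}, so $W$ is not literally a replacement of $V$ in $B_t(x)$ without further argument; and the assertion $\|W\|(B_t(x))=\|V\|(B_t(x))$ needs a justification that no mass escapes through $\partial B_t(x)$ or concentrates at $x$ in the limit.) Your step transferring the replacement property from $V$ to $C$ is plausible and could be made rigorous, but it is not needed: the paper's route of rescaling the replacements of $V$ is both shorter and avoids the double limit you point out as the main technical obstacle.
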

\begin{proof}
First of all, by the monotonicity formula there is a constant $C_M$ such that 
\begin{equation}\label{e:monotonicity}
\frac{\|V\|(B_\sigma(x))}{\sigma^n}\leq C_M\frac{\|V\|(B_\rho(x))}{\rho^n}
\qquad\mbox{for all $x\in M$ and all $0<\sigma\leq\rho<\Inj(M)$.}
\end{equation}
Fix $x\in \supp\,(\|V\|)$ and $0<r<\min \{r(x), \Inj (M)/4\}$.
Next, we replace $V$ with $V'$ in the annulus $An(x,r,2r)$.
We observe that $\|V'\|\not\equiv 0$ on 
$An(x,r,2r)$, otherwise there would be $\rho\leq r$ and $\eps$ such that 
$\supp\,(\|V'\|)\cap \partial B_\rho(x)\neq \emptyset$ and 
$\supp \,(\|V'\|)\cap \An(x, \rho,\rho+\eps)=\emptyset$. 
By the choice of $\rho$, this would contradict Theorem \ref{t:max_prin}(ii).

Thus we have found that $V'\res An(x, r,2r)$ is a non-empty stable
minimal hypersurface and hence 
there is $y\in An(x,r,2r)$ with $\theta(y,V')\geq 1$. By \eqref{e:monotonicity},
\begin{equation}\label{lower bound}
 \frac{\|V\|(B_{4r}(x))}{(4r)^n}=\frac{\|V'\|(B_{4r}(x))}{(4r)^n}\geq
\frac{\|V'\|(B_{2r}(y))}{(4r)^n}\geq\frac{\omega_n}{2^nC_M}\theta(y,V')\geq \frac{\omega_n}{2^nC_M}.
\end{equation}
Hence, $\theta(x,V)$ is uniformly bounded away from $0$ on $\supp\,(\|V\|)$
and Allard's Rectifiability Theorem (see Theorem 42.4 of \cite{Si}) gives that $V$ is rectifiable.

Let $C$ denote a tangent cone to $V$ at $x$ and $\rho_k\to 0$ a sequence with 
$V^x_{\rho_k}\to C$. Note that $C$ is stationary. We replace $V$ by $V'_k$ in 
$An(x,\lambda \rho_k,(1-\lambda) \rho_k)$, where $\lambda \in (0, 1/4)$
and set $W'_k=(T^x_{\rho_k})_\sharp V'_k$. 
Up to subsequences we have $W'_k\to C'$ for some stationary 
varifold $C'$. By the definition of a replacement we obtain
\begin{eqnarray}
 C'&=&C\quad\text{in}\;\cB_{\lambda}\cup An(0,1-\lambda,1),\label{e:coincide}\\
\|C'\|(\cB_\rho)&=&\|C\|(\cB_\rho)\quad\text{for}\;\rho\in(0,\lambda)\cup(1-\lambda,1).
\label{cone equality}
\end{eqnarray}
Moreover, since $C$ is cone, 
\begin{equation}\label{e:force_cone}
\frac{\|C'\|(\cB_\sigma)}{\sigma^n}=\frac{\|C'\|(\cB_\rho)}{\rho^n}\quad
\text{for all}\;\rho,\sigma\in (0,\lambda)\cup(1-\lambda,1).
\end{equation}
By the monotonicity formula for stationary varifolds in euclidean spaces,
\eqref{e:force_cone} implies
that $C'$ as well is a cone (see for instance 17.5 of \cite{Si}). 
Moreover, by the Compactness Theorem \ref{t:SScomp}, 
$C'\res An (0, \lambda, 1-\lambda)$
is a stable embedded minimal hypersurface. Since $C$ and $C'$ are
integer rectifiable, the conical structure of $C$ implies
that $\supp (C)$ and $\supp (C')$ are closed cones (in the usual meaning for sets)
and the densities $\theta (\cdot, C)$ and $\theta (\cdot, C')$ are 
$0$--homogeneous functions (see Theorem 19.3 of \cite{Si}). 
Thus \eqref{e:coincide} implies $C=C'$ and hence that $C$ is a stable
minimal hypersurface in $An (0, \lambda, 1-\lambda)$. Since
$\lambda$ is arbitrary, $C$ is a stable minimal
hypersurface in the punctured ball. Thus,
if $n\leq 6$, by Simons' Theorem (see Theorem B.2 in \cite{Si}) $C$ is in
fact a multiple of a hyperplane. If instead $n\geq 7$, since $\{0\}$ has dimension
$0\leq n-7$, $C$ is a minimal hypersurface in 
the whole ball $\mathcal{B}_1$ (recall Definition \ref{d:convention}).
\end{proof}

\subsection{Unique continuation and two technical lemmas
on varifolds} To conclude the proof we need yet three
auxiliary results. All of them are justified
in Appendix \ref{a:tech}.
The first one is a consequence of the
classical unique continuation for minimal surfaces.

\begin{theorem}\label{t:unique}
Let $U$ be a smooth open subset of $M$ and $\Sigma_1, \Sigma_2\subset U$ two
connected {\em smooth} embedded minimal hypersurfaces with $\partial \Sigma_i \subset 
\partial U$. If $\Sigma_1$ coincides with $\Sigma_2$ in some open subset of $U$,
then $\Sigma_1=\Sigma_2$. 
\end{theorem}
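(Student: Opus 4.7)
The plan is to combine the classical Aronszajn unique continuation theorem for linear elliptic equations with a connectedness argument. Define
\[
E \;:=\; \{x\in \Sigma_1\cap\Sigma_2 \colon \text{$\Sigma_1$ and $\Sigma_2$ coincide on some open neighborhood of $x$}\}.
\]
By hypothesis $E$ is nonempty, and $E$ is obviously open in each $\Sigma_i$. The core task is to show that $E$ is also closed in $\Sigma_1$; then, since $\Sigma_1$ is connected, we conclude $E=\Sigma_1$, so $\Sigma_1\subset \Sigma_2$, and the symmetric argument applied inside $\Sigma_2$ (using that $E$ is nonempty, open, and closed in $\Sigma_2$ as well) gives $\Sigma_2=E=\Sigma_1$.

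To prove closedness, I would take a sequence $x_k\in E$ with $x_k\to x\in \Sigma_1$. The hypothesis $\partial \Sigma_2\subset \partial U$ means $\Sigma_2$ is relatively closed in $U$, so $x\in \Sigma_2$. Since near each $x_k$ the two surfaces coincide, their tangent planes agree at $x_k$; by smoothness and continuity the common tangent plane propagates to the limit, i.e.\ $T_x\Sigma_1=T_x\Sigma_2=:\pi$. In Fermi coordinates along a small disk $D\subset \pi$ about $x$, for $\delta$ small enough each $\Sigma_i$ is the graph of a smooth function $u_i\in C^\infty(D)$. Each $u_i$ solves the minimal surface equation for the metric $g$, which in these coordinates has the quasilinear form
\[
F(y,u_i,\nabla u_i,\nabla^2 u_i)=0,
\]
with $F$ smooth in all its arguments and elliptic in $\nabla^2 u$ near $\nabla u=0$. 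Writing
\[
0 \;=\; F(y,u_1,\nabla u_1,\nabla^2 u_1) - F(y,u_2,\nabla u_2,\nabla^2 u_2) \;=\; \int_0^1 \frac{d}{ds} F(y,u_s,\nabla u_s,\nabla^2 u_s)\,ds,
\]
where $u_s = (1-s)u_2+su_1$, the difference $w:=u_1-u_2$ satisfies a homogeneous linear elliptic equation
\[
Lw \;:=\; a^{ij}(y)\,\partial_{ij}w + b^i(y)\,\partial_i w + c(y)\,w \;=\; 0
\]
with smooth coefficients (the integrals of the partials of $F$ along the path $u_s$), and $L$ is uniformly elliptic on a possibly smaller disk around $x$.

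Since $\Sigma_1=\Sigma_2$ near each $x_k$, we have $w\equiv 0$ on an open neighborhood of each $x_k$ (intersected with $D$), so $w$ vanishes on an open subset of $D$ whose closure contains $x$. Aronszajn's unique continuation theorem for second-order linear elliptic equations with smooth coefficients then forces $w\equiv 0$ on the connected component of $D$ containing that open set, which we can arrange to contain $x$. Hence $\Sigma_1=\Sigma_2$ in a whole neighborhood of $x$, so $x\in E$, completing the closedness step. The main technical obstacle is the rigorous derivation that the difference of two solutions of the (quasilinear) minimal surface equation satisfies a \emph{linear} elliptic PDE with smooth coefficients and the appropriate ellipticity for Aronszajn to apply; this is a standard linearization argument but requires the above path integral in the jet of $F$, together with smoothness of $u_1,u_2$ (which holds since both surfaces are smooth and minimal).
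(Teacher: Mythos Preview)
Your proof is correct and follows essentially the same route as the paper: a connectedness argument combined with Aronszajn's unique continuation theorem. The only cosmetic difference is that the paper writes $\Sigma_2$ directly as a normal graph $w$ over $\Sigma_1$ (so the difference function is built in and one obtains the elliptic differential inequality $|A^{ij}D^2_{ij}w|\le C(|Dw|+|w|)$ immediately), whereas you write both surfaces as graphs over the common tangent plane and linearize along the path $u_s$; both reductions lead to the same application of Aronszajn.
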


The other two are elementary lemmas
for stationary varifolds.

\begin{lemma}\label{l:tech1}
Let $r< \Inj (M)$ and $V$ a stationary varifold.
Then
\begin{equation}\label{e:tech1}
\supp (V)\cap \overline{B}_r (x) \;=\; 
\overline{\bigcup_{0<s<r} \supp (V\res B_s (x))\cap \partial B_s (x)}.
\end{equation}
\end{lemma}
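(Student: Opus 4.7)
The containment $\supseteq$ is immediate: each $A_s := \supp(V\res B_s(x)) \cap \partial B_s(x)$ lies inside the closed set $\supp(V) \cap \overline B_r(x)$ (using $\supp(V\res B_s(x)) \subseteq \supp(V)$ and $\partial B_s(x) \subseteq \overline B_r(x)$ for $s<r$), so the closure of the union is contained there as well. The substantive content is the reverse inclusion. I plan to fix $y \in \supp(V) \cap \overline B_r(x)$ and $\eps > 0$, and produce $s \in (0,r)$ and $w \in A_s$ with $d(w,y) \leq \eps$. When $y=x$, the monotonicity formula \eqref{e:MonFor} prevents a finite-mass stationary varifold from having an atom (any atom at $x$ would force $\|V\|(B_\rho(x))/\rho^n \to \infty$ as $\rho \to 0$, incompatible with the non-decreasing character of the ratio), so $\supp(V)$ contains a point $y_1 \neq x$ in $B_{\eps/2}(x)$ and one may work with $y_1$ in place of $x$. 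Hence we may assume $y \neq x$, and set $d := d(x,y)$.

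In the main case $d < r$, pick $s \in (d,r)$ and consider the compact set
\[
K := \supp(V\res B_s(x)) \cap \overline B_\eps(y).
\]
A sufficiently small open neighborhood $U$ of $y$ lies inside $B_s(x)$ and has $\|V\|(U)>0$ because $y\in\supp(V)$, whence $y \in \supp(V\res B_s(x))$ and therefore $y \in K$. Let $w^* \in K$ maximize the continuous function $z \mapsto d(x,z)$, with maximum $m \leq s$. If $m = s$, then $w^* \in \partial B_s(x) \cap K \subseteq A_s$ and $d(w^*,y) \leq \eps$, and we are done. If instead $m < s$, maximality on $K$ forces all the $V$--mass of $V\res B_s(x)$ in a sufficiently small neighborhood $U$ of $w^*$ to be contained in $\overline{B_m(x)}$, i.e. $\|V\|(U\cap B_s(x)\setminus\overline{B_m(x)})=0$; combined with $\|V\|(U\cap B_s(x))>0$ (coming from $w^* \in \supp(V\res B_s(x))$), this yields $\|V\|(U\cap \overline{B_m(x)})>0$, and hence $\|V\|(U\cap B_m(x))>0$ whenever $V$ carries no mass on $\partial B_m(x)$ near $w^*$. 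Since the monotone function $\rho \mapsto \|V\|(B_\rho(x))$ has at most countably many jumps, a small perturbation of $s$ avoids these exceptional radii and produces $w^* \in A_m$ with $d(w^*,y)\leq\eps$.

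The remaining boundary case $y \in \partial B_r(x)$ (i.e. $d = r$) is reduced to the previous one by approximating $y$ with a sequence $y_n \in \supp(V) \cap B_r(x)$, $y_n \to y$, and applying the second paragraph to each $y_n$ with radii $s_n \in (d(x,y_n),r)$ and $s_n \to r$; this yields $w_n \in A_{s_n}$ with $d(w_n,y_n) \to 0$ and hence $w_n \to y$. The main obstacle is producing such a sequence: one must rule out the possibility that $V$ touches $\partial B_r(x)$ at $y$ only tangentially from outside $\overline B_r(x)$, so that $\supp(V) \cap B_r(x)$ has no points near $y$. This is the delicate geometric point where the stationarity of $V$ enters essentially, via the maximum principle (Theorem \ref{t:max_prin}) and a blow--up analysis of tangent cones of $V$ at $y$ against the strictly convex sphere $\partial B_r(x)$; everything else in the proof is soft measure--theoretic manipulation.
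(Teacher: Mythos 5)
Your containment $\supseteq$ is fine, and the reduction away from $y=x$ via absence of atoms is fine, but the heart of the argument has genuine gaps and, in any case, follows a completely different route from the paper. The paper's proof is a two--line reduction: it considers the set $T$ of points $y\in\supp(V)$ at which the approximate tangent plane is transversal to $\partial B_{d(x,y)}(x)$; at such a $y$ the blow--up is a plane crossing the sphere, so $y\in\supp(V\res B_{d(x,y)}(x))\cap\partial B_{d(x,y)}(x)$, and the statement follows from the density of $T$ in $\supp(V)$, which is Lemma B.2 of the cited appendix of \cite{CD} (a short coarea/rectifiability argument). Your proposal instead tries to produce the witnessing point directly by maximizing $d(x,\cdot)$ on $K=\supp(V\res B_s(x))\cap\overline B_\eps(y)$.

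The first gap is in the key step of the main case. When $m<s$ you assert that maximality forces $\|V\|(U\cap B_s(x)\setminus\overline{B_m(x)})=0$ for a small neighborhood $U$ of $w^*$. This only follows if $U\subset \overline B_\eps(y)$, i.e.\ if $w^*$ lies in the \emph{interior} of $\overline B_\eps(y)$; but the maximizer can very well sit on $\partial B_\eps(y)$. In fact this is precisely what the maximum principle forces: applying Theorem \ref{t:max_prin}(ii) at $w^*\in\supp(V)\cap\partial B_m(x)$ produces points of $\supp(V)$ in $B_\rho(w^*)\setminus\overline B_m(x)$ for every $\rho>0$; if $w^*$ were interior to $\overline B_\eps(y)$ these would lie in $K$ and have $d(x,\cdot)>m$, contradicting maximality. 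So in the case $m<s$ one \emph{always} has $w^*\in\partial B_\eps(y)$, and then you have no local control on $V$ near $w^*$: for instance $V$ could be tangent to $\partial B_m(x)$ from outside at $w^*$, so that $\|V\|(U\cap B_m(x))=0$ for small $U$ and $w^*\notin A_m$. The second gap is the proposed fix, ``perturb $s$ to avoid the bad radii.'' The bad radii are values $\rho$ with $\|V\|(\partial B_\rho(x))>0$, and what needs to avoid them is $m=m(s)$, not $s$; there is no reason for $m(s)$ to be continuous or to move when $s$ moves (it could be locked at a single bad value over a whole interval of $s$), so this does not close the argument. Finally, you explicitly leave the boundary case $d(x,y)=r$ open; this is not a cosmetic omission, because a stationary $V$ touching $\partial B_r(x)$ tangentially from outside at a single point has $\supp(V)\cap B_r(x)$ empty near that point, so no sequence $y_n\in\supp(V)\cap B_r(x)$ converging to $y$ exists, and the right--hand side of \eqref{e:tech1} fails to reach $y$ (this suggests the lemma is really intended with $B_r(x)$ rather than $\overline B_r(x)$ on the left; in the paper's application in Step~4 of the proof of Proposition \ref{p:reg} the radius is arbitrary, so nothing is lost).
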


\begin{lemma}\label{l:tech2}
Let $\Gamma\subset U$ be a relatively closed set of dimension $n$ and $S$ 
a closed set of dimension at most $n-2$ such that $\Gamma\setminus S$
is a smooth embedded hypersurface. Assume $\Gamma$ induces a varifold $V$
which is stationary in $U$. If $\Delta$ is a connected component of
$\Gamma\setminus S$, then $\Delta$ induces a stationary varifold.
\end{lemma}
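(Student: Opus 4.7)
Denote $W := \Delta$ viewed as the multiplicity--one varifold induced by the smooth hypersurface $\Delta$. First I would observe that, since $\Gamma\setminus S$ is a smooth embedded hypersurface and $V$ is stationary in $U$, the classical first variation formula applied to vector fields $\chi\in \mathcal{X}_c(U\setminus S)$ reduces to
\[
[\delta V](\chi)\;=\;\int_{\Gamma\setminus S}\dv_{\Gamma\setminus S}\chi\,d\cH^n\;=\;0,
\]
which by a standard localization argument forces the mean curvature of every connected component of $\Gamma\setminus S$ to vanish pointwise. In particular $\Delta$ is classically minimal, so $[\delta W](\chi)=0$ for every $\chi\in\mathcal{X}_c(U\setminus S)$.

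The only thing left is to promote this to arbitrary $\chi\in\mathcal{X}_c(U)$, i.e.\ to show that the singular set $S$ is a removable set for the first variation of $W$. For this I would use a capacitary cutoff argument based on the dimensional assumption $\dim_{\cH} S\leq n-2$. Fix $\chi\in\mathcal{X}_c(U)$ with compact support $K\subset U$. Since $\cH^{n-1}(S\cap K)=0$, for every $\eta>0$ there is a finite cover of $S\cap K$ by balls $\{B_{r_i}(y_i)\}$ with $\sum_i r_i^{n-1}<\eta$ and $r_i$ small enough that $B_{2r_i}(y_i)\subset U$. Construct a smooth cutoff $\varphi_\eta$ which vanishes on $\bigcup_i B_{r_i}(y_i)$, equals $1$ outside $\bigcup_i B_{2r_i}(y_i)$, and satisfies $|\nabla\varphi_\eta|\leq C/r_i$ on $B_{2r_i}(y_i)\setminus B_{r_i}(y_i)$.

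Then $\varphi_\eta\chi\in\mathcal{X}_c(U\setminus S)$, so by the first step $[\delta W](\varphi_\eta\chi)=0$. Writing
\[
[\delta W](\chi)\;=\;[\delta W]\bigl((1-\varphi_\eta)\chi\bigr)
\;=\;\int_\Delta\dv_\Delta\bigl((1-\varphi_\eta)\chi\bigr)\,d\cH^n,
\]
and expanding by the product rule, I get a bound
\[
\bigl|[\delta W](\chi)\bigr|\;\leq\;\|\chi\|_{C^1}\int_\Delta(1-\varphi_\eta)\,d\cH^n
\;+\;\|\chi\|_{C^0}\int_\Delta|\nabla\varphi_\eta|\,d\cH^n.
\]
The first integral tends to $0$ as $\eta\to 0$ by dominated convergence, since $(1-\varphi_\eta)\to 0$ pointwise off $S$ and $\cH^n(\Delta\cap S)=0$. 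For the second, the monotonicity formula \eqref{e:MonFor} applied to the stationary varifold $V$ yields a constant $C=C(V,K)$ with $\|V\|(B_{2r_i}(y_i))\leq Cr_i^n$; since $\cH^n(\Delta\cap B_{2r_i}(y_i))\leq\|V\|(B_{2r_i}(y_i))$ because $\Delta$ enters $V$ with multiplicity one, we obtain
\[
\int_\Delta|\nabla\varphi_\eta|\,d\cH^n\;\leq\;\sum_i\frac{C}{r_i}\cdot Cr_i^n\;=\;C\sum_i r_i^{n-1}\;<\;C\eta.
\]
Letting $\eta\downarrow 0$ gives $[\delta W](\chi)=0$, proving that $W$ is stationary.

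The only genuinely delicate step is the second one: one must marry the $(n-1)$-Hausdorff measure zero property of $S$ with the $n$-dimensional mass bound coming from monotonicity, in order for the cutoff to have negligible divergence cost. Once the cover $\{B_{r_i}\}$ is controlled by $\sum r_i^{n-1}$ and the balls carry mass $\lesssim r_i^n$, the telescoping $r_i^{-1}\cdot r_i^n=r_i^{n-1}$ makes the estimate work. Everything else is routine.
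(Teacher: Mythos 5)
Your proposal is correct and follows essentially the same route as the paper: vanishing mean curvature on the smooth part, a cutoff adapted to a cover of $S$ with $\sum_i r_i^{n-1}$ small (using $\dim S\leq n-2$), and the monotonicity formula for $V$ to bound the mass in the small balls by $C r_i^n$, so that the gradient term telescopes to $C\sum_i r_i^{n-1}$. The only cosmetic difference is that you split $\chi=\varphi_\eta\chi+(1-\varphi_\eta)\chi$ and estimate the second piece directly (with a dominated convergence remark), whereas the paper estimates $\int_\Delta|\dv_\Delta(\chi-\chi_\eps)|$ in one stroke; the content is identical.
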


\subsection{Proof of Proposition \ref{p:reg}}
{\bf Step 1: Set up.}
Let $x\in M$ and $\rho\leq \Inj (M)/2$. Then we choose a replacement $V'$ 
for $V$ in $An(x,\rho,2\rho)$
coinciding with a stable minimal embedded hypersurface $\Gamma'$. Next, choose
$s\in (0, \rho)$ and $t\in (\rho, 2\rho)$ such that $\partial B_t(x)$ intersects 
$\Gamma'$ transversally.
Then we pick a second replacement $V''$ of $V'$ in $An(x,s,t)$, coinciding 
with a stable minimal
embedded hypersurface $\Gamma''$ in the annulus $An(x,s,t)$. 
Now we fix a point $y\in \partial B_t(x)\cap \Gamma'$ that is a regular point of $\Gamma'$ and a radius $r>0$ sufficiently small such that $\Gamma'\cap B_r(y)$ is topologically an $n$-dimensional ball in $M$ and $\gamma=\Gamma'\cap \partial B_t(x)\cap B_r(y)$ is a smooth $(n-1)$-dimensional surface. This can be done due to our regularity assumption on $y$. Then we choose a diffeomorphism $\zeta: B_r(y)\to \cB_1$ such that
$$\zeta(\partial B_t(x))\subset\{z_1=0\}\quad\text{and}\quad \zeta(\Gamma'')\subset\{z_1>0\},$$
where $z_1,\dots,z_{n+1}$ are orthonormal coordinates in $\cB_1$. Finally suppose $$\zeta(\gamma)=\{(0,z_2,\dots,z_n,g'((0,z_2,\dots,z_n))\}\;\text{and}\; \zeta(\Gamma')\cap\{z_1\leq 0\}=\{(z_1,\dots,z_n,g'((z_1,\dots,z_n))\}$$
for some smooth function $g'$. Note that
\begin{itemize}
 \item any kind of estimates (like curvature estimates or area bound or monotonicity) for a minimal surface $\Gamma\subset B_r(y)$ translates into similar estimates for the surface $\zeta(\Gamma)$;
\item varifolds in $B_r(y)$ are pushed forward to varifolds in $\cB_1$ and there is a natural correspondence between tangent cones to $V$ in $\xi$ and tangent cones to $\zeta_{\sharp}V$ in $\zeta(\xi)$.
\end{itemize}
We will use the same notation for the objects in $B_r(y)$ and their images under $\zeta$.

\begin{figure}[htbp]
\begin{center}
\input{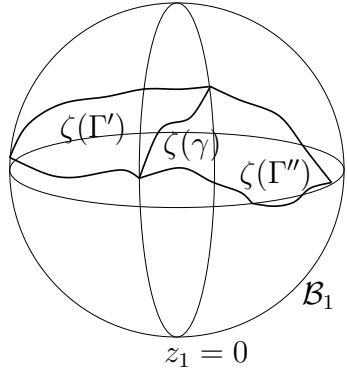} 
    \caption{The surfaces $\Gamma'$, $\Gamma''$ and $\gamma$
in the coordinates $z$.} 
    \label{f:setup}
\end{center}
\end{figure}

\textbf{Step 2: Tangent cones.}
We next claim that any tangent cone to $V''$ at any point $w\in \gamma$ is a unique flat space. 
Note that all these $w$ are regular points of $\Gamma'$. 
Therefore by our transversality assumption every tangent cone $C$ at $w$ coincides in 
$\{z_1<0\}$ with the half space $T_w\Gamma'\cap \{z_1<0\}$.
We wish to show that $C$ coincides with $T_w \Gamma'$. By the Constancy Theorem (see Theorem 41.1 in \cite{Si}), it suffices 
to show $\supp (C) \subset T_w \Gamma'$.

Note first that if $z\in T_w\Gamma'\cap \{z_1=0\}$ is a regular point for $C$,
then by Theorem \ref{t:unique}, $C$ coincides with $T_w \Gamma'$ in a neighborhood
of $z$. Therefore, if $z\in \supp (C)\cap
\{z_1=0\}$, either $z$ is a singular point, or $C=T_w \Gamma'$ in a neighborhood
of $z$. Assume now by contradiction that $p\in \supp (C)\setminus T_w \Gamma'$.
Since $\Sing\, C$ has dimension at most $n-7$, we can assume that $p$
is a regular point of $C$. Consider next a sequence $N^j$ of smooth open 
neighborhoods of $\Sing\, C$ such that $T_w \Gamma'\setminus \overline{N}^j$
is connected and $N^j\to \Sing\, C$. Let $\Delta^j$ be the connected
component of $C\setminus \overline{N}^j$ containing $p$. Then $\Delta^j$
is a smooth minimal surface with $\partial \Delta^j\subset \partial N^j$.
We conclude that $\Delta^j$ cannot touch
$\{z_1=0\}$: it would touch it in a regular point of $\supp (C)\cap \{z_1=0\}$
and hence it would coincide with $T_w \Gamma'\setminus \overline{N}^j$,
which is impossible because it contains $p$. If we let
$\Delta=\cup \Delta^j$, then $\Delta$ is a connected component of the regular
part of $C$, which does not intersect $\{z_1=0\}$. Let 
$W$ be the varifold induced by $\Delta$: by Lemma \ref{l:tech2} $W$
is stationary.
Since $C$ is a cone, $W$ is also a cone.
Thus $\supp (W)\ni 0$. On the other hand $\supp (W)\subset \{z_1\geq 0\}$.
Thus, by Theorem \ref{t:max_prin}(i), $\{z_1=0\}\subset \supp (W)$.
But this would imply that $\{z_1=0\}\cap T_w \Gamma'$ is in the singular
set of $C$: this is a contradiction because the dimension of $\{z_1=0\}\cap T_w \Gamma'$
is $n-1$.

\medskip

 \textbf{Step 3: Graphicality.}
In this step we show that the surfaces $\Gamma'$ and $\Gamma''$ 
can be ``glued'' together at $\partial B_t (x)$, that is
\begin{equation}\label{e:step3}
\mbox{$\Gamma''\subset \Gamma'$ in $B_t (x)\setminus B_{t-\eps} (x)$
for some $\eps>0$.}
\end{equation}
For this we fix $z\in \gamma$ and, using the notation of Step 2, 
consider the (exterior) unit normal 
$\tau(z)$ to the graph of $g'$. Let $T^z_r\colon\RR^{n+1}\to\RR^{n+1}$ be the dilation of the $(n+1)$-space given by
$$
T^z_r(\bar{z})=\frac{\bar{z}-z}{r}.
$$
By Step 2 we know that any tangent cone to $V''$ at $z$ is given by the 
tangent space $T_z\Gamma'$ and therefore the rescaled surfaces 
$\Gamma_r=T^z_r(\Gamma'')$ converge to the half space 
$H=\{v\colon \tau(z)\cdot v=0,v_1>0\}$. We claim that this implies that we have
\begin{equation}\label{normal convergence}
 \lim_{\bar{z}\to z,\bar{z}\in\Gamma''}
\frac{|(\bar{z}-z)\cdot \tau(z)|}{|\bar{z}-z|}=0
\end{equation}
uniformly on compact subsets of $\gamma$. We argue by contradiction and assume the claim is wrong. Then there is a sequence $\{z_j\}\subset \Gamma''$ with $z_j\to z$ and $|(z_j-z)\cdot \tau(z)|\geq k|z_j-z|$ for some $k>0$. We can assume that $z_j$ is a regular point of $\Gamma''$ for all $j\in \NN$. We set $r_j=|z_j-z|$, then there is a positive constant $\bar{k}$ such that $\cB_{2\bar{k}r_j}(z_j)\cap H=\emptyset$. This implies that $\dist(H, \cB_{\bar{k}r_j}(z_j))\geq \bar{k}r_j$. By the minimality of $\Gamma''$ we can apply the monotonicity formula and find
$$\|V''\|(\cB_{\bar{k}r_j}(z_j))\geq C\bar{k}^n r^n_j$$
for some positive constant $C$ depending on the diffeomorphism $\zeta$. In other words there is a considerable amount of the varifold that is far from the half space $H$. But this contradicts the fact that the corresponding full space is the only tangent cone. We also point out that this convergence is uniform on compact subsets of $\gamma$.\\
Now we denote by $\nu$ the smooth normal field to $\Gamma''$ with $\nu\cdot (0,\dots,0,1)\geq 0$. Let $\Sigma$ be the space $\{(0,\alpha_1,\dots,\alpha_n):\alpha_i\in \RR\}$. Then we assume that $z_j\to z$, set $r_j=\dist(z_j,\Sigma)$ and define the rescaled hypersurfaces $\Gamma_j=T^{z_j}_{r_j}(\Gamma''\cap \cB_{r_j}(z_j))$. 
Then all the $\Gamma_j$ are smooth stable minimal surfaces in $\cB_1$, 
thus we can apply Theorem \ref{t:SScomp} to extract a subsequence that 
converges to a stable minimal hypersurface in the ball $\cB_{1/2}$. 
But by \eqref{normal convergence} we know that this limit surface is simply 
$T_z\Gamma'\cap\cB_{1/2}$. 
Since the convergence is in the $C^1$ topology we have
$$
\lim_{\bar{z}\to z,\bar{z}\in \Gamma''}\nu(\bar{z})=\tau(z).
$$
Again this convergence is uniform in compact subsets of $\gamma$.

For any $z\in \gamma$ Theorem \ref{t:SScomp} gives us a radius $\sigma>0$ 
and a function $g''\in C^2(\{z_1\geq 0\})$ with
\begin{gather}
 \Gamma''\cap B_\sigma(z)=\{(z_1,\dots,z_n,g''(z_1,\dots,z_n)): z_1>0\}\\
g''(0,z_2,\dots, z_n)=g'(0,z_2,\dots,z_n)\quad\text{and}\quad Dg''
(0,z_2,\dots, z_n)=Dg'(0,z_2,\dots,z_n).
\end{gather}
Using elliptic regularity theory (see \cite{GT}), we conclude that 
$g'$ and $g''$ are the restriction of a smooth function $g$ 
giving a minimal surface $\Delta$.
Using now Theorem \ref{t:unique},
we conclude that $\Delta \subset \Gamma'$, and hence
that $\Gamma''$ is a subset of $\Gamma'$ in a neighborhood of $z$.
Since this is vaild for every $z\in \gamma$, we conclude
\eqref{e:step3}. 

\medskip

 \textbf{Step 4: Regularity in the annuli.}
In this step we show that $V$ is a minimal hypersurface in the punctured
ball $B_{\rho} (x)\setminus \{x\}$. First of all we prove
\begin{equation}\label{equalinannuli1}
\Gamma'\cap An(x,\rho,t)=\Gamma''\cap An(x,\rho,t). 
\end{equation}
Assume for instance that $p\in \Gamma''\setminus \Gamma'$. Without
loss of generality we can assume that $p$ is a regular point.
Let then $\Delta$ be the connected component of $\Gamma''\setminus
(\Sing\, \Gamma''\cup \Sing\, \Gamma')$ containing $p$. 
$\Delta$ is necessarily contained
in $\overline{B}_{t-\eps} (x)$, otherwise by \eqref{e:step3}
and Theorem \ref{t:unique}, $\Delta$ would coincide with
a connected component of $\Gamma'\setminus
(\Sing\, \Gamma''\cup \Sing\, \Gamma')$ contradicting $p\in \Gamma''\setminus \Gamma'$. But then $\Delta$ induces,
by Lemma \ref{l:tech2}, a stationary varifold $V$, with
$\supp (V)\subset \overline{B}_{t-\eps} (x)$. So, for some $s\leq t-\eps$,
we have $\partial B_s (x)\cap \supp (V)\neq\emptyset$
and $\supp (V)\subset \overline{B}_s (x)$, contradicting
Theorem \ref{t:max_prin}(ii). This proves $\Gamma''\subset \Gamma'$. Precisely the same argument
can be used to prove $\Gamma'\subset \Gamma''$. 

Thus we conclude that $\Gamma'\cup \Gamma''$ is in fact a 
minimal hypersurface in $An (x, s, 2\rho)$. Since $s$
is arbitrary, this means that $\Gamma'$ is in fact
contained in a larger minimal hypersurface $\Gamma
\subset B_{2\rho} (x)\setminus \{x\}$ and that, 
moreover, $\Gamma''\subset \Gamma$ for any
second replacement $V''$, whatever is the choice of $s$
($t$ being instead fixed).  

Fix now such a $V''$ and note that $V''\res B_s (x) =
V\res B_s (x)$. Note, moreover, that by Theorem
\ref{t:max_prin}(ii) we necessarily conclude
$$
\supp (V\res B_s (x))\cap \partial B_s (x)
\;\subset\; \overline{\Gamma''}\;\subset\;\Gamma\, .
$$
Thus, using Lemma \ref{l:tech1}, we conclude $\supp (V)\subset
\Gamma$, which hence proves the desired regularity of
$V$. 

\medskip

 \textbf{Step 5: Conclusion.}
The only thing left to analyize are the centers of the balls $B_\rho (x)$ 
of the previous steps. 
Clearly, if $n\geq 7$, we are done because by the compactness of $M$ we 
only have to add 
possibly a finite set of points, that is a 
$0$-dimensional set, to the singular set. In 
other words, the centers of the balls 
can be absorbed in the singular set.

If, on the other hand, $n\leq 6$, we need to show that $x$ is a regular point. 
If $x\notin \supp\,(\|V\|)$ we are done, so we assume $x\in \supp\, (\|V\|)$. 
By Lemma 
\ref{l:tangent cones} we know that every tangent cone is a multiple 
$\theta (x, V)$ of a plane (note that $n\leq 6$).
Consider the rescaled exponential maps of Section \ref{ss:TC}
and note that the rescaled varifolds
$V_r$ coincide with $(T^x_r)^{-1} (\Gamma)=\Gamma_r$. 
 Using Theorem \ref{t:SScomp}
we get the $C^1$--convergence of subsequences in $\cB_1\setminus\cB_{1/2}$
and hence the integrality of $\theta (x, V) = N$.
 
Fix geodesic coordinates in a ball $B_\rho (x)$. Thus, 
given any small positive constant $c_0$, if $K\in\N$ is sufficiently
large, there is a hyperplane $\pi_K$ such that,
on $An (x, 2^{-K-2}, 2^{-K})$, the varifold $V$ is 
the union of $m (K)$ disjoint graphs of Lipschitz functions
over the plane $\pi_K$, all
with Lipschitz constants smaller than $c_0$, counted with multiplicity
$j_1 (K), \ldots, j_m (K)$, with $j_1+\ldots + j_m=N$.
We do not know a-priori that there is a {\em unique} tangent 
cone to $V$ at $x$.
However, if $K$ is sufficiently large, it follows that
the tilt between two consecutive planes $\pi_K$ and $\pi_{K+1}$
is small. Hence $j_i (K)= j_i (K+1)$ and the corresponding Lipschitz graphs
do join, forming $m$ disjoint smooth minimal surfaces in the annulus
$An (x, 2^{-K-3}, 2^{-K})$, topologically equivalent to $n$--dimensional
annuli. Repeating the process inductively,
we find that $V\res B_\rho (x)\setminus \{x\}$ is in fact the union
of $m$ smooth disjoint minimal hypersurfaces $\Gamma^1, \ldots,
\Gamma^m$ (counted with multiplicities $j_1+\ldots + j_m=N$), which are
all, topologically, 
punctured $n$--dimensional balls. 

Since $n\geq 2$, by
Lemma \ref{l:tech2}, each $\Gamma^i$ induces a stationary varifold.
Every tangent cone to $\Gamma^i$ at $x$ is a hyperplane and,
moreover, the density of $\Gamma^i$ (as a varifold) is everywhere equal
to $1$. We can therefore apply Allard's regularity Theorem
(see \cite{All}) to conclude that each $\Gamma^i$ is regular.
On the other hand, the $\Gamma^i$ are disjoint in $B_r (x)\setminus
\{x\}$ and they contain $x$. Therefore, if $m>1$, we
contradict the classical maximum principle. We conclude that $m=1$
and hence that $x$ is a regular point for $V$.

\appendix

\section{Proofs of the technical lemmas}\label{a:tech}

\subsection{Varifolds and Caccioppoli set limits}
\begin{propos}\label{p:varivscacc}
 Let $\{\Omega^k\}$ be a sequence of Caccioppoli sets and 
$U$ an open subset of $M$. Assume that 
\begin{itemize}
 \item [(i)] $D \ind_{\Omega^k}\to D\ind_\Omega$ in 
the sense of measures in $U$;
 \item [(ii)] $\per (\Omega^k, U)\to\per(\Omega,U)$
\end{itemize}
for some Caccioppoli set $\Omega$ and denote by $V^k$ and $V$ the varifolds induced by $\partial ^\ast\Omega^k$ and $\partial^\ast\Omega$. Then $V^k\to V$ in the sense of varifolds.
\end{propos}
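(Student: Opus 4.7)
The strategy is to identify both varifold integrals with integrals against the total variation measures $|D\ind_{\Omega^k}|$ and then invoke Reshetnyak's continuity theorem for vector-valued Radon measures. Recall that on the reduced boundary one has the polar decomposition
$$D\ind_{\Omega^k} \;=\; \nu^k\, |D\ind_{\Omega^k}|, \qquad |D\ind_{\Omega^k}|\res U \;=\; \haus^n\res \partial^*\Omega^k,$$
where $\nu^k$ is the measure-theoretic inner unit normal and the approximate tangent plane to $\partial^*\Omega^k$ at $\haus^n$-a.e.\ $x$ is exactly $\nu^k(x)^\perp$; and likewise for $V$ with normal $\nu$.

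Given $\varphi\in C_c(G(U))$, define $f(x,\nu):=\varphi(x,\nu^\perp)$. Since $G(U)$ is the Grassmannian of \emph{unoriented} planes, $\nu^\perp=(-\nu)^\perp$ shows that $f$ is well defined and continuous on the unit sphere bundle over $U$, with compact $x$-support. By definition of the induced varifolds,
$$\int \varphi\, dV^k \;=\; \int_{\partial^*\Omega^k}\!\!\varphi(x,\nu^k(x)^\perp)\, d\haus^n(x) \;=\; \int_U f(x,\nu^k(x))\, d|D\ind_{\Omega^k}|(x),$$
and the analogous identity holds for $V$. Now hypothesis (i) is precisely weak-$\ast$ convergence of the vector-valued Radon measures $D\ind_{\Omega^k}$ to $D\ind_\Omega$, while (ii) gives convergence of the total masses $|D\ind_{\Omega^k}|(U)\to|D\ind_\Omega|(U)$. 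These two conditions together constitute what is called \emph{strict} (or \emph{area-strict}) convergence of vector measures, and Reshetnyak's continuity theorem states that under strict convergence
$$\int_U g(x,\nu^k(x))\, d|D\ind_{\Omega^k}|(x)\;\longrightarrow\; \int_U g(x,\nu(x))\, d|D\ind_\Omega|(x)$$
for every bounded continuous $g$ with compact $x$-support. Specializing to $g=f$ is the conclusion.

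The only genuine technical obstacle is that $U$ sits inside the Riemannian manifold $M$, so the Grassmannian $G(U)$ and the sphere bundle are nontrivial, whereas Reshetnyak's theorem is classically stated in Euclidean space. This is handled by a routine partition-of-unity argument: cover $\supp\varphi$ by finitely many coordinate charts and subordinate a partition of unity $\{\chi_j\}$, apply the Euclidean Reshetnyak theorem separately to each $\chi_j f$ after pulling back the Caccioppoli sets through the chart, and sum. Strict convergence is preserved under multiplication by continuous bump functions and under diffeomorphic pushforward (the bounded smooth Jacobian factors relating $\haus^n$ in the chart to $\haus^n$ on $M$ can be absorbed into the integrand), so each chart contributes a convergent piece and the conclusion follows.
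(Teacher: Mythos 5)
Your Euclidean core is correct, and it is a genuinely different route from the paper's: the identification $\int\varphi\,dV^k=\int_U \varphi\bigl(x,\nu^k(x)^\perp\bigr)\,d|D\ind_{\Omega^k}|(x)$ via De Giorgi's structure theorem is fine, and in flat space the proposition is indeed an immediate consequence of Reshetnyak's continuity theorem. The paper does not cite Reshetnyak; it reproves the needed special case by hand: extract a subsequential varifold limit $W$, disintegrate it as $\cH^n\res\partial^\ast\Omega\otimes\alpha_x$, blow up at an $\cH^n$-a.e.\ point so that $\Omega$ becomes a half-space $H$, and use the elementary identity $\int_{\partial^\ast\tilde\Omega^k}\|\nu-e_1\|^2=2\cH^n(\partial^\ast\tilde\Omega^k)-2\int_{\partial^\ast\tilde\Omega^k}\langle\nu,e_1\rangle\to 0$ (this is where (i) and (ii) enter) to force $\alpha_{x_0}=\delta_{T_{x_0}\partial^\ast\Omega}$. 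The blow-up has a hidden virtue: it linearizes the metric, so the Riemannian/Euclidean discrepancy disappears automatically.

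That discrepancy is exactly where your argument has a genuine gap, in the step you dismiss as routine. In a chart, the Riemannian perimeter measure is not the Euclidean one times a bounded Jacobian that can be ``absorbed into the integrand'': it is $\theta(x,\nu_e)\,d|D^e\ind_\Omega|$ with $\theta(x,\nu_e)=\sqrt{\det g(x)}\,\sqrt{g^{ij}(x)(\nu_e)_i(\nu_e)_j}$, a factor depending on the (Euclidean) normal direction. Consequently hypothesis (ii), localized to a chart domain $A$ (which already requires picking $A$ with $\per(\Omega,\partial A)=0$ so that the mass convergence localizes), yields convergence of the \emph{anisotropic} masses $\int_A\theta(x,\nu^k_e)\,d|D^e\ind_{\Omega^k}|$, not of the Euclidean total variations $|D^e\ind_{\Omega^k}|(A)$ which the classical Reshetnyak theorem takes as hypothesis; passing from one to the other presupposes the very convergence of normals you are trying to prove. (Strict convergence is preserved by the naive pushforward of a vector measure, but the chart does not act on $D\ind_\Omega$ that way: it acts through the differential, and that is what creates the anisotropy.) The gap is fixable in at least two ways: use the version of Reshetnyak's continuity theorem in which the total variation is replaced by $\int\theta(x,\cdot)\,d|\mu|$ with $\theta(x,\cdot)$ a norm depending continuously on $x$ (the lifting-to-the-sphere-bundle proof goes through verbatim, strict convexity plus Jensen forcing the disintegration to be a Dirac mass), or isometrically embed $M\subset\RR^N$ by Nash and regard the $D\ind_{\Omega^k}$ as $\RR^N$-valued measures on $U$, whose total variations are then exactly the Riemannian perimeters, so Reshetnyak applies as stated (its proof is insensitive to the structure of the base space). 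As written, however, ``absorb the Jacobian into the integrand'' does not close the argument, because the offending factor sits in the mass-convergence hypothesis of the theorem, not in the integrand.
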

\begin{proof}
First, we note that by the rectifiability of the boundaries we can write
\begin{equation}\label{e:var_rep}
 V^k\;=\;\cH^n\res\partial^\ast\Omega^k\otimes
\delta_{T_x\partial^\ast\Omega^k}\qquad\mbox{and}
 \qquad
V\;=\;\cH^n\res\partial^\ast\Omega\otimes\delta_{T_x\partial^\ast\Omega}\, ,
\end{equation}
where $\partial^\ast\Omega,\partial^\ast\Omega^k$ are the reduced boundaries
and $T_x \partial^\ast \Omega$ is the approximate
tangent plane to $\Omega$ in $x$ (see Chapter 3 of \cite{Giu} for
the relevant definitions). With the notation $\mu \otimes \alpha_x$
we understand, as usual, the measure $\nu$ on a product space
$X\times Y$ given by
$$
\nu (E) \;=\; \int\int \ind_E (x,y)\, d\alpha_x (y)\,
d\mu (x)\, ,
$$
where $\mu$ is a Radon measure on $X$ and $x\mapsto \alpha_x$
is a weak$^\ast$ $\mu$--measurable map from $X$ into $\mathcal{M} (Y)$
(the space of Radon measures on $Y$). 

By $(ii)$ we have $\|V^k\|\to\|V\|$ and hence there is $W\in \mathcal{V}(U)$ such that (up to subsequences) $V^k\to W$. In addition, $\|V\|=\|W\|$. By the disintegration theorem (see Theorem 2.28 in \cite{AFP}) we can write 
$W=\cH^n\res\partial^\ast\Omega\otimes\alpha_x$. 
The proposition is proved, once we have proved
\begin{itemize}
\item[(Cl)] $\alpha_{x_0}=\delta_{T_{x_0}\partial^\ast\Omega}$ for 
$\cH^n$-a.e. $x_0\in \partial^\ast\Omega$.
\end{itemize}
To prove this, we reduce the situation to the case where 
$\Omega$ is a half space by a classical blow-up 
analysis. Having fixed a point $x_0$, a radius $r$, and
the rescaled exponential maps $T^r_{x_0}: \mathcal{B}_1 \to B_r (x_0)$
as in Subsection \ref{ss:TC}, we define
\begin{itemize}
\item $V^k_r := (T^r_{x_0})^{-1}_\sharp V^k$ and $V_r:= (T^r_{x_0})^{-1}_\sharp V$;
\item $\Omega^k_r := (T^r_{x_0})^{-1}(\Omega^k)$ and $\Omega_r:= (T^r_{x_0})^{-1} (\Omega)$.
\end{itemize}
Clearly, $V^k_r$ and $\Omega^k_r$ are related by
the same formulas as in \eqref{e:var_rep}. Next, let
$G$ be the set of radii $r$ such that $\cH^n (\partial^\ast
\Omega^k \cap \partial B_r (x_0))=\cH^n (\partial^\ast \Omega\cap \partial B_r (x_0))=0$
for every $k$ and observe that the complement 
of $G$ is a countable set.
Denote by $H$ the set
$\{x_1< 0\}$.
Then, after a suitable choice of orthonormal
coordinates in $\mathcal{B}_1$, we have 
\begin{itemize}
\item[(a)] $D\ind_{\Omega^k_r}\to D\ind_{\Omega_r}$
and $\per (\Omega^k_r, \mathcal{B}_1) \to
\per (\Omega_r, \mathcal{B}_1)$ for $k\to \infty$ and
$r\in G$;
\item[(b)] $D\ind_{\Omega_r} \to D\ind_H$ and
$\per (\Omega_r, \mathcal{B}_1)\to \per (H, \mathcal{B}_1)$
for $r\to 0, r\in G$;
\item[(c)] $T_0 \partial^* H = T_{x_0} \partial^\ast \Omega$; 
\item[(d)] $V^k_r \to V_r$ for $k\to \infty$ and $r\in G$. 
\end{itemize}
(The assumption $r\in G$ is essential: see Proposition
1.62 of \cite{AFP} or Proposition 2.7 of \cite{D}).

Next, for $\cH^n$--a.e. $x_0\in \partial^\ast \Omega$ we have in addition
\begin{itemize}
\item[(e)] $V_r \to \cH^n \res \partial^\ast H \otimes \alpha_{x_0}$
\end{itemize}
(in fact, if $\mathcal{D}\subset C (\mathbb{P}^n\RR)$ is a dense set,
the claim holds for every $x_0$ which is a point of approximate
continuity for all the functions $x\mapsto \int \varphi (y)
d\alpha_x (y)$ with $\varphi\in \mathcal{D}$).

By a diagonal argument we get sets $\tilde{\Omega}^k =
\Omega^k_{r(k)}$ such that
\begin{itemize}
\item[(f)] $D\ind_{\tilde{\Omega}^k} \to D \ind_H$ and
$\per (\tilde{\Omega}^k, \mathcal{B}_1) \to \per (H,
\mathcal{B}_1)$;
\item[(g)] $\cH^n \res \partial^\ast \tilde{\Omega}^k
\otimes \delta_{T_x \partial^* \tilde{\Omega}^k}
\to \cH^n \res \partial^\ast H \otimes \alpha_{x_0}$.
\end{itemize}
Let $e_1= (1,0,\ldots 0)$ and $\nu$ be the exterior unit normal 
to $\partial^\ast \Omega^k$. Then (f) implies
$$
\lim_{k\to \infty} \int_{\partial^\ast\tilde{\Omega}_k}\|\nu-e_1\|^2=
\lim_{k\to \infty} \left(2\cH^n(\partial^\ast\tilde{\Omega}_k)-
2\int_{\partial^\ast\tilde{\Omega}_k}\langle\nu,e_1\rangle\right)
\;=\; 0\, .
$$
This obviously gives $\cH^n \res \partial^\ast \tilde{\Omega}^k
\otimes \delta_{T_x \partial^* \tilde{\Omega}^k} \to
\cH^n \res \partial^\ast H \otimes \delta_{T_0 \partial^* H}$,
which together with (c) and (g) gives $\alpha_{x_0}
= \delta_{T_0 \partial^\ast H} = \delta_{T_{x_0} \partial^\ast
\Omega}$, which is indeed the Claim (Cl).   
\end{proof}

\subsection{Proof of Theorem \ref{t:unique}}
Let $W\subset U$ be the maximal open set on which $\Sigma_1$ and
$\Sigma_2$ coincide. If $W\neq U$, then there is a point $p\in 
\overline{W}\cap U$. In a ball $B_\rho (p)$, $\Sigma_2$ is
the graph of a smooth function $w$ over $\Sigma_1$ (as usual,
we use normal coordinates in a regular neighborhood of $\Sigma_1$).
By a straightfoward computation, $w$ satisfies a 
differential inequality of the form $| A^{ij} D^2_{ij} w|\leq C (|Dw|+|w|)$
where $A$ is a smooth function with values in symmetric matrices,
satisfying the usual ellipticity condition $A^{ij}\xi_i\xi_j \geq \lambda
|\xi^2|$, where $\lambda>0$. Let $x\in U$ be such that
$\dist (x,p)< \eps$. Then $w$ vanishes at infinite order in $x$
and hence, according to the classical result
of Aronszajn (see \cite{Ar}), $w\equiv 0$ on a ball $B_r (x)$ where
$r$ depends on $\lambda$, $A$, $C$ and $\dist (x, \partial B_\rho (p))$, 
but not on $\eps$. Hence,
by choosing $\eps<r$ we contradict the maximality of $W$.

\subsection{Proof of Lemma \ref{l:tech1}} 
Let $T$ be the set of points $y\in \supp (V)$ such that the approximate
tangent plane to $V$ in $y$ is transversal to the sphere $\partial B_{|y-x|}
(x)$. The claim follows from the density of $T$ in $\supp (V)$. 
The (quite short) proof of this statement can be found for instance
in Appendix B of \cite{CD} (cp. with Lemma B.2 therein).

\subsection{Proof of Lemma \ref{l:tech2}}
Set $\Gamma_r:= \Gamma\setminus S$ and denote by $H$ the 
mean curvature of $\Gamma_r$ and by $\nu$ the unit normal
to $\Gamma_r$. Obviously $H=0$.
Let $V'$ be the varifold induced by $\Delta$.
We claim that
\begin{equation}\label{e:MC}
[\delta V'] (\chi) \;=\; \int_{\Delta} {\rm div}_{\Delta}\,
\chi \;=\;- \int_{\Delta} H \chi\cdot \nu\, 
\end{equation}
for any vector field $\chi\in \mathcal{X}_c (U)$. 

The first identity is the classical computation of the first
variation (see Lemma 9.6 of \cite{Si}). To prove the second identity,
fix a vector field $\chi$ and a constant $\eps>0$.
W.l.o.g. we assume $S\subset \Gamma$.
By the definition of the Hausdorff measure, there exists 
a covering of $S$ with balls $B_{r_i} (x_i)$ centered
on $x_i\in S$ such that
$r_i<\eps$ and $\sum_i r_i^{n-1}\leq \eps$. By the compactness
of $S\cap \supp (\chi)$ we can find a finite covering
$\{B_{r_i} (x_i)\}_{i\in \{1, \ldots, N\}}$. Fix smooth cutoff
functions $\varphi_i$ with 
\begin{itemize}
\item $\varphi_i=1$ on $M\setminus B_{2r_i} (x_i)$
and $\varphi_i=0$ on $B_{r_i} (x_i)$;
\item $0\leq \varphi_i\leq 1$, $|\nabla \varphi_i|\leq C r_i^{-1}$.
\end{itemize}
(Note that $C$ is in fact only a geometric constant.)
Then $\chi_\eps \;:=\; \chi\Pi \varphi_i$ 
is compactly supported in $U\setminus S$. Thus,
\begin{equation}\label{e:byparts}
\int_\Delta {\rm div}_\Delta\,
\chi_\eps \;=\; -\int_\Delta H \chi_\eps\cdot \nu\, 
\end{equation}
The RHS of \eqref{e:byparts} obviously converges to the RHS
of \eqref{e:MC} as $\eps\to 0$. As for the left hand 
side, we estimate
\begin{eqnarray}
&&\int_\Delta \left|{\rm div}_\Delta (\chi-\chi_\eps)\right|
\;\leq\; \sum_i \int_{B_{r_i} (x_i)\cap \Delta} 
(\|\nabla \chi\|_{C^0} + \|\chi\|_{C^0}
\|\nabla \varphi_i\|_{C^0})\nonumber\\
&\leq& \sum_i \|V\| (B_{r_i} (x_i)) \|\chi\|_{C^1}
(1+ C r_i^{-1}) \;\leq\; C \|\chi\|_{C^1} \sum_i (r_i^n+C r_i^{n-1}) \;<\;
C\eps\label{e:estHaus}
\end{eqnarray}
where the second inequality in the last line follows from the monotonicity
formula. We thus conclude that the LHS of \eqref{e:byparts} converges
to the LHS of \eqref{e:MC}.

\nocite{*}
\bibliographystyle{plain}
\bibliography{bibliografia}

\end{document}